\newtheorem{theorem}{Theorem}[section]
\newtheorem{lemma}[theorem]{Lemma}
\newtheorem{proposition}[theorem]{Proposition}
\newtheorem{corollary}[theorem]{Corollary}
\newtheorem{definition}{Definition}[section]
\theoremstyle{remark} \newtheorem{remark}[theorem]{Remark}
\theoremstyle{definition} 
\numberwithin{equation}{section}
\newcommand{\dx}{\Delta x}
\newcommand{\dt}{\Delta t}
\newcommand{\del}{\partial}
\newcommand{\ra}{\rightarrow}
\newcommand{\alp}{\alpha}
\newcommand{\eps}{\ensuremath{\varepsilon}}
\newcommand{\R}{\ensuremath{\mathbb{R}}}
\newcommand{\Z}{\ensuremath{\mathbb{Z}}}
\newcommand{\Div}{\mathrm{div}}
\newcommand{\Levy}{\ensuremath{\mathcal{L}}}
\newcommand{\sgn}{{\rm sgn}\, }
\newcommand{\dif}{\mathrm{d}}
\newcommand{\xl}{x_{|x_l=x_{\alp_l}}}
\begin{document}

\title[Numerical methods for convection-diffusion equations]{On
  numerical methods and error estimates\\ for degenerate  fractional convection-diffusion
  equations} 

\author[{S.~Cifani}]{{Simone Cifani}}
\address[Simone Cifani]{\\ Department of Mathematics\\ Norwegian University of Science and Technology (NTNU)\\
 N-7491 Trondheim, Norway.}

\author[{E.~R.~Jakobsen}]{{Espen R. Jakobsen}}
\address[Espen R. Jakobsen]{\\ Department of Mathematics\\
Norwegian University of Science and Technology (NTNU)\\
N-7491 Trondheim, Norway.} \email[]{erj\@@math.ntnu.no}
\urladdr{http://www.math.ntnu.no/\~{}erj/}

\keywords{Fractional conservation laws, 
convection-diffusion equations, porous medium equation, entropy
solutions, numerical method, convergence rate, error estimates}

\thanks{This research was supported by the Research Council of Norway (NFR) through the project "Integro-PDEs:
numerical methods, analysis, and applications to finance".}

\begin{abstract}
First we introduce and analyze a convergent numerical method for a large
class of nonlinear nonlocal possibly degenerate convection diffusion
equations. Secondly we develop a new Kuznetsov type theory and obtain
general and possibly optimal error estimates for our numerical methods
-- even when 
the principal derivatives have any fractional order between 1 and 2!
The class of equations we 
consider includes equations with nonlinear and possibly degenerate
fractional or general Levy diffusion. Special cases are conservation
laws, fractional conservation laws, certain fractional porous medium
equations, and new strongly degenerate equations. 
\end{abstract}

\maketitle

\section{Introduction}
In this paper we develop a numerical method along with a general
Kuznetsov type theory of error estimates for integro partial
differential equations of the form 
\begin{equation}\label{1}
\left\{
\begin{array}{ll}
\partial_tu+\Div f(u)=\Levy^\mu[A(u)],&(x,t)\in Q_T,\\
u(x,0)=u_{0}(x), &x\in\R^d,
\end{array}
\right.
\end{equation}
where $Q_T=\mathbb{R}^d\times(0,T)$ and the nonlocal diffusion operator
$\Levy^\mu$ is  defined as
\begin{align}\label{non-loc-op}
\Levy^\mu[\phi](x)=\int_{|z|>0}\phi(x+z)-\phi(x)-z\cdot\nabla\phi(x)\,\mathbf{1}_{|z|<1}(z)\
\dif\mu(z),
\end{align}
for smooth bounded functions $\phi$. Here $\mathbf{1}$
denotes the indicator function. 
Throughout the paper the data $(f,A,\mu,u_0)$ is assumed
to satisfy:
\begin{itemize}
\item[(\textbf{A}.1)] $f=(f_1,\ldots,f_d)\in W^{1,\infty}(\R;\R^d)$ with $f(0)=0$,
\medskip
\item[(\textbf{A}.2)] $A\in W^{1,\infty}(\R)$, $A$ non-decreasing with
$A(0)=0$,
\medskip
\item[(\textbf{A}.3)] $\mu\geq 0$ is a Radon measure such that $\int_{|z|>0}|z|^2\wedge 1 \
\dif\mu(z)<\infty$,
\medskip
\item[(\textbf{A}.4)] $u_0\in L^\infty(\R^d)\cap L^1(\R^d)\cap BV(\R^d)$.
\end{itemize}
We use the notation $a\wedge b=\min(a,b)$ and $a\vee b=\max(a,b)$. 

\begin{remark}
These assumptions can be relaxed in two standard ways: (i) $f,A$ can take any
value at $u=0$ (replace $f$ by $f-f(0)$ etc.), and (ii) $f,A$ can
 be assumed to be locally Lipschitz.
By the maximum principle and (\textbf{A}.4), solutions of
\eqref{1} are bounded, and locally Lipschitz functions are
Lipschitz on compact domains.
\end{remark}

The measure $\mu$ and the operator
$\Levy^\mu$ are respectively the L\'{e}vy measure and the generator of
a pure jump L\'{e}vy process. Any such process has a L\'evy
measure and generator satisfying \eqref{non-loc-op} and
(\textbf{A}.3), see e.g. \cite{App:Book}. Example are the
symmetric $\alp$-stable processes with fractional Laplace generators where
\begin{equation}\label{frac_lap}
\begin{split}
d\mu(z)=c_\lambda\frac{dz}{|z|^{d+\lambda}}\ \
(c_\lambda>0)\qquad\text{and}\qquad \Levy^\mu\equiv
-(-\Delta)^{\lambda/2}\quad\text{for }\lambda\in(0,2).
\end{split}
\end{equation}
Non-symmetric examples are popular in mathematical finance, e.g. the CGMY
model where
\begin{equation*}
d\mu(z)=\left\{
\begin{split}
&\frac{C\,e^{-G|z|}}{|z|^{1+\lambda}}dz&\text{for $z>0$,}\\
&\frac{C\,e^{-M|z|}}{|z|^{1+\lambda}}dz&\text{for $z<0$,}
\end{split}
\right.
\end{equation*}
and where $d=1$, $\lambda(=Y)\in(0,2)$, and $C,G,M>0$. We refer the reader to
\cite{Cont/Tankov} for more details on this and other
nonlocal models in finance. In both examples the nonlocal operator
behaves like a fractional derivative of order between 0 and 2.

Equation \eqref{1} has a local non-linear convection term (the
$f$-term) and a fractional (or nonlocal) non-linear possibly
degenerate diffusion 
term (the $A$-term). Special cases are scalar conservation laws
($A\equiv0$), fractional and L\'evy conservation laws ($A(u)=u$ and
$\alp$-stable or more general $\mu$) -- see
e.g. \cite{BiKaWo99,Alibaud} and \cite{BiKaWo00,RoYo07,KaUl10}, fractional
porous medium equations \cite{DPQRV} ($A=|u|^{m-1}u$ for $m\geq1$ and
$\alp$-stable $\mu$), and strongly degenerate equations where
$A$ vanishes on a set of positive measure. If either $A$ is degenerate
or $\Levy^\mu$ is a fractional derivative of order less than $1$, then
solutions of \eqref{1} are not smooth in general and uniqueness fails
for weak (distributional) solutions. Uniqueness can be regained by
imposing additional entropy conditions in a similar way to what is
done for conservation laws. The Kruzkov entropy solution theory of
scalar conservation laws \cite{Kru70} was extended to cover fractional
conservation laws in \cite{Alibaud}, to more general L\'evy conservation
laws in \cite{KaUl10}, and then finally to setting of this paper,
equations with non-linear fractional diffusion and general L\'evy
measures in \cite{Cifani/Jakobsen}. For local 2nd order degenerate
convection diffusion equations like 
\begin{equation}\label{locCD}
\partial_tu+\Div f(u)=\Delta A(u),
\end{equation}
there is an entropy solution theory due to Carrillo 
\cite{Car99}. 

In recent years, integro partial differential equations like \eqref{1}
have been at the center of a very active field of research. 
A thorough description of the mathematical background for
such equations, relevant bibliography, and applications to several
disciplines of interest can be found in
\cite{Alibaud,Alibaud/Cifani/Jakobsen,BiKaWo00,Cifani/Jakobsen,DPQRV,KaUl10}.

The first contribution of this paper is to introduce a
numerical method for equation \eqref{1} and prove that it converges toward
the entropy solution of \eqref{1} under assumptions
$(\mathbf{A}.1)$--$(\mathbf{A}.4)$. The numerical method is based upon
a monotone finite volume discretization of an approximate equation with
truncated and hence bounded L\'evy measure. Essentially it is an
extension of the method in \cite{Cifani/Jakobsen} from symmetric $\alp$-stable
to general L\'evy measures, but since non-symmetric measures are
allowed, the discretization becomes  more complicated here. 
Apart from its   
ability to capture the correct solution for the whole family of
equations of the form \eqref{1}, the main advantage of our numerical
method is that it allows for a complete error analysis through the new
framework for error estimates that we develop in the second part of
the paper.

The second, and probably most important contribution of the paper, is
the development of a theory capable of producing error estimates for
degenerate equations of order greater than 1. This theory is based on
a non-trivial extension of the Kuznetsov theory 
for scalar conservation laws \cite{Kuznetsov} to the current
fractional diffusion setting. An initial step in this analysis was
performed in \cite{Alibaud/Cifani/Jakobsen}, with the derivation
of a so-called Kuznetsov lemma in a relevant form for \eqref{1}. In
\cite{Alibaud/Cifani/Jakobsen} the lemma is used in the derivation of
continuous dependence estimates and error estimates for
vanishing viscosity type of approximations of \eqref{1}. In the
present paper, we show how it can be used in solving the more difficult
problem of finding error estimates
for numerical methods for \eqref{1}.

As a corollary of our Kuznetsov type theory, we obtain explicit
$\lambda$-dependent error estimates 
when $\mu$ is a measure satisfying
\begin{equation}\label{fractional_meas}
\begin{split}
0\leq \mathbf 1_{|z|<1}d\mu(z)\leq c_\lambda\frac{dz}{|z|^{d+\lambda}}\qquad\text{for}\qquad c_\lambda>0\text{
and }\lambda\in(0,2).
\end{split}
\end{equation}
In this paper we will call such measures {\em fractional measures}.
For example for the implicit version of our numerical method
\eqref{scheme_implicit}, we prove in Section \ref{sec:framework} that
$$\|u(\cdot,T)-u_{\dx}(\cdot,T)\|_{L^1(\R^d)} \leq C_T\left\{
\begin{array}{ll}
\dx^\frac{1}{2}&\lambda\in(0,1),\\
\dx^\frac{1}{2}\log(\dx)&\lambda=1,\\
\dx^\frac{2-\lambda}{2}&\lambda\in(1,2),
\end{array}
\right.$$
where $u$ is the entropy solution of \eqref{1} and $u_{\dx}$ is the
solution of \eqref{scheme_implicit}. Note that our error estimate
covers all values $\lambda\in(0,2)$, all spacial dimensions $d$, and
possibly strongly degenerate equations!
Also note that under our assumptions, the 
solution $u$ possibly only have BV regularity in space. Hence the
error estimate is 
robust in the sense that it holds also for discontinuous
solutions, and moreover, the classical result of
Kuznetsov \cite{Kuznetsov}
for conservation laws follows as a corollary by taking $A\equiv0$ (a
valid choice here!) and
$\lambda\in(0,1)$. The above estimate is also consistent with 
error estimates for  the vanishing $\lambda$-fractional viscosity method,
$$\partial_tu+\Div f(u)=-\dx\,(-\Delta)^{\lambda/2}u\quad\text{as}\quad\dx\ra0^+,$$
see e.g. \cite{DrVV,Alibaud}, but note that our problem is
different and much more difficult.

There is a vast literature on approximation schemes and error
estimates for scalar conservations laws, we refer e.g. to the books
\cite{Kr:Book,Holden/Risebro} and references therein for more details.
For local degenerate convection-diffusion equations like
\eqref{locCD}, some approximation methods and error estimates can be
found e.g. in \cite{EK00,EGH02,Karlsen/Koley/Risebro} and references
therein. In this setting it is very difficult to obtain error
estimates for numerical methods, and the only result we are aware of is a
very recent one by Karlsen et al. \cite{Karlsen/Koley/Risebro} (but
see also \cite{Chen/Karlsen}). This very nice result applies to rather
general equations of the form \eqref{locCD} but in one space dimension
and under additional regularity assumptions (e.g. $\del_x(A(u))\in
BV$). When it comes to nonlocal 
convection-diffusion equations, the literature is very recent and
not yet very extensive. The paper 
\cite{DeRo04} introduce finite volume schemes for radiation
hydrodynamics equations, a model where $\Levy^\mu$ is a
nonlocal derivative of order $0$. Then fractional conservation laws
are discretized in \cite{Droniou,Cifani/Jakobsen/Karlsen,CJ10} with finite
difference, discontinuous Galerkin, and spectral vanishing viscosity
methods respectively. In \cite{DeRo04,Cifani/Jakobsen/Karlsen}
Kuznetsov type error estimates are given, but only for integrable L\'evy
measures or measures like \eqref{frac_lap} with $\lambda<1$. Both of
these results can be obtained through the framework of this paper. In
\cite{CJ10} error estimates 
are given for all $\lambda$ but with completely different
methods. The general degenerate non-linear case is discretized in
\cite{Cifani/Jakobsen} (without error estimates) for symmetric
$\alp$-stable L\'evy measures and then in the most general case in the
present paper.  

Linear non-degenerate versions of \eqref{1} frequently arise in
Finance, and the problem of solving these equations numerically has generated
a lot of activity over the last decade. An introduction and overview
of this activity can be found in the book
\cite{Cont/Tankov}, including numerical schemes based on
truncation of the L\'evy measure. We also mention the literature on
fractional and nonlocal fully non-linear equations like e.g. the Bellman
equation of optimal control theory. Such equations have been
intensively studied over the last decade using viscosity solution
methods, including initial results on numerical methods and error
analysis. We refer e.g. \cite{BI08,Biswas,JKLC08} and references therein for an
overview and the most general results in that direction. In fact,
ideas from that field has been essential in the development of the
entropy solution theory of equations like \eqref{1}, and the
construction of monotone numerical methods of this paper parallels the
one in \cite{Biswas}. However the structure of the two classes of
equations along with their mathematical and numerical analysis are
very different.  

This paper is organized as follows. In Section
\ref{sec:prelim}  we recall the
entropy formulation and well-posedness results for \eqref{1} of
\cite{Cifani/Jakobsen} and the Kuznetsov type lemma derived in
\cite{Alibaud/Cifani/Jakobsen}. We present the numerical method in
Section \ref{sec:num_met}. There we focus on the case of no convection
($f\equiv 0$) to simplify the exposition and focus on new ideas. In Section 
\ref{sec:properties} we prove several auxiliary properties of the
numerical method which will be useful in the following sections. We
establish existence, uniqueness, and a priori estimates for the
solutions of the numerical method in Section \ref{sec:comp}. The general 
Kuznetsov type theory for deriving error estimates is presented in 
Section \ref{sec:framework}, where it is also used to establish a rate of
convergence for equations with 
fractional L\'evy measures, i.e. \eqref{fractional_meas} holds. 
In Section \ref{sec:convection_equations} we extend all the
results considered so far to general convection-diffusion equations of the
form \eqref{1} with $f\not\equiv0$. Finally, we give the proof of the
main error estimate Theorem \ref{th:kuz_prior} in Section \ref{sec:pf}.


\section{Preliminaries}
\label{sec:prelim}
In this section we briefly recall the entropy formulation for equations of
the form \eqref{1} introduced in \cite{Cifani/Jakobsen}, and the new
Kuznetsov type of lemma established in \cite{Alibaud/Cifani/Jakobsen}. 
 Let $\eta(u,k)=|u-k|$, $\eta'(u,k)=\sgn(u-k)$,
$q_l(u,k)=\eta'(u,k)\,(f_l(u)-f_l(k))$ for $l=1,\ldots,d$, and
write the nonlocal operator $\Levy^\mu[\phi]$ as
$$\Levy_{r}^\mu[\phi]+\Levy^{\mu,r}[\phi]+
\gamma^{\mu,r}\cdot\nabla\phi,$$
 where
\begin{equation*}
\begin{split}
\Levy_{r}^\mu[\phi](x)&=\int_{0<|z|\leq r}\phi(x+z)-\phi(x)-z\cdot\nabla\phi(x)\textbf{1}_{|z|\leq1}\ \dif\mu(z),\\
\Levy^{\mu,r}[\phi](x)&=\int_{|z|>r}\phi(x+z)-\phi(x)\ \dif\mu(z),\\
\gamma^{\mu,r}_l&=-\int_{|z|>r}z_l\textbf{1}_{|z|\leq1}\
\dif\mu(z),\qquad l=1,\ldots,d.
\end{split}
\end{equation*}
We also define $\mu^\ast$ by $\mu^\ast(B)=\mu(-B)$ for
all Borel sets $B\not\ni0$. Let us recall that
\begin{align*}
\int_{\R^d}\varphi(x)\,\Levy^{\mu}[\psi](x)\ \dif x
=\int_{\R^d}\psi(x)\,\Levy^{\mu^\ast}[\varphi](x)\ \dif x
\end{align*}
for all smooth $L^\infty\cap L^1$ functions $\varphi,\psi$,
cf.~\cite{Alibaud/Cifani/Jakobsen,Cifani/Jakobsen}.

\begin{definition}\emph{(Entropy solutions)}\label{def:entropy}
A function $u\in L^\infty(Q_T)\cap C([0,T];L^1(\R^d))$ is an
entropy solution of \eqref{1} if, for all $k\in\mathbb{R}$,  $r>0$, and test
functions $0\leq\varphi\in C^\infty_c(\R^d\times[0,T])$,
\begin{equation}\label{entropy_ineq}
\begin{split}
&\int_{Q_{T}}\eta(u,k)\,\partial_t\varphi+\left(q(u,k)+\gamma^{\mu^\ast,r}\right)\cdot\nabla\varphi+\eta(A(u),A(k))\,\Levy^{\mu^\ast}_{r}[\varphi]\\
&\qquad\qquad\qquad\qquad\qquad\qquad\qquad\qquad\qquad+\eta'(u,k)\,\Levy^{\mu,r}[A(u)]\,\varphi\ \dif x\, \dif t\\
&\qquad-\int_{\R^d}\eta(u(x,T),k)\,\varphi(x,T)\ \dif
x+\int_{\R^d}\eta(u_0(x),k)\,\varphi(x,0)\ \dif x\geq 0.
\end{split}
\end{equation}
\end{definition}

Note that $\gamma^{\mu,r}_l\equiv
0$ when the L\'{e}vy measure $\mu$ 
is symmetric, i.e. when $\mu^*\equiv\mu$. From \cite{Cifani/Jakobsen}
we now have the following well-posedness result.
\begin{theorem}\emph{(Well-posedness)}
\label{thm:WP}
Assume (A.1) -- (A.4) hold. Then there exists a unique entropy solution $u$ of  \eqref{1} such that 
$$u\in L^\infty(Q_T)\cap
C([0,T];L^1(\R^d))\cap L^\infty(0,T;BV(\mathbb{R}^d)),$$
and the following a priori estimates hold
\begin{align*}
&\|u(\cdot,t)\|_{L^\infty(\R^d)}\leq\|u_0\|_{L^\infty(\mathbb{R}^d)},\\
&\|u(\cdot,t)\|_{L^1(\R^d))}\leq\|u_0\|_{L^1(\mathbb{R}^d)},\\
&|u(\cdot,t)|_{BV(\mathbb{R}^d)}\leq|u_0|_{BV(\mathbb{R}^d)},\\
&\|u(\cdot,t)-u(\cdot,s)\|_{L^1(\R^d)}\leq \sigma(|t-s|),
\end{align*}
for all $t,s\in[0,T]$ where 
$$\sigma(r)=\begin{cases}c\, r&\text{if } \int_{|z|>0}|z|\wedge 1\ \dif\mu(z)<\infty,\\
c\, r^{\frac1{2}}&\text{otherwise.}
\end{cases}$$
Moreover, if also \eqref{fractional_meas} holds, then
$$\sigma(r)=\begin{cases}c\, r&\text{if }\lambda\in(0,1),\\
c\, |r\ln r|&\text{if }\lambda=1,\\
c\, r^{\frac1{\lambda}}&\text{if }\lambda\in(1,2).\end{cases}$$
\end{theorem}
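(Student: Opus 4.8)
The plan is to obtain uniqueness together with an $L^1$--contraction from a nonlocal version of Kruzkov's doubling of variables, to obtain existence by passing to the limit in a family of approximate problems with truncated (hence finite) L\'evy measures, and to read off the a priori estimates either as byproducts of the contraction or from direct manipulation of the equation. Throughout, the splitting $\Levy^\mu=\Levy_r^\mu+\Levy^{\mu,r}+\gamma^{\mu,r}\cdot\nabla$ is the workhorse, with the radius $r>0$ kept fixed until the very end of each argument.

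\emph{Uniqueness and contraction.} Given two entropy solutions $u$ and $v$, I would double the space--time variables: apply \eqref{entropy_ineq} for $u$ with $k=v(y,s)$ against a nonnegative test function of the form $\varphi(x,t)\,\omega_\epsilon(x-y)\,\theta_{\epsilon_0}(t-s)$, apply the symmetric inequality for $v$ with $k=u(x,t)$, and add the two. The local convection terms are handled precisely as in Kruzkov's classical argument. The genuinely new contribution is the nonlocal term: the bounded pieces $\Levy^{\mu,r}$ and $\gamma^{\mu,r}\cdot\nabla$ are of order $0$ and $1$ and pass to the diagonal limit directly, while the singular piece $\Levy_r^\mu[A(u)]$ --- which only makes sense after integration against $\varphi$ --- is treated as the nonlocal analogue of the degenerate second--order term in Carrillo's theory. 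The key points are a nonlocal ``entropy dissipation''/chain-rule inequality, morally $\eta'(u,k)\,\Levy_r^\mu[A(u)]\le\Levy_r^\mu[\eta(A(u),A(k))]$ in the weak sense, and the observation that, since $A$ is non-decreasing, $\eta'(u,k)=\eta'(A(u),A(k))$ wherever $A(u)\neq A(k)$, so that a degenerate $A$ causes no loss. Letting $\epsilon,\epsilon_0\to0$ and then $r\to0$ produces the Kato inequality $\frac{\dif}{\dif t}\|u(\cdot,t)-v(\cdot,t)\|_{L^1(\R^d)}\le0$, hence uniqueness and the $L^1$--contraction; the same computation in fact yields a comparison principle.

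\emph{Existence and a priori bounds.} I would approximate $\mu$ by its restriction $\mu_\nu$ to $\{|z|>\nu\}$, which is finite, so that $\Levy^{\mu_\nu}$ is bounded and the approximate equation is a scalar conservation law with a bounded nonlocal lower--order term; an entropy solution $u_\nu$ meeting the estimates below uniformly in $\nu$ then exists by classical means (or by the numerical scheme of this paper). As for those estimates: constant functions are classical --- hence entropy --- solutions of \eqref{1}, so comparison with the constant super/subsolutions $\pm\|u_0\|_{L^\infty}$ gives the $L^\infty$ bound and the $L^1$--contraction against $u\equiv0$ gives the $L^1$ bound; since $u(\cdot+h,\cdot)$ is the entropy solution with datum $u_0(\cdot+h)$, the contraction gives $\|u(\cdot+h,t)-u(\cdot,t)\|_{L^1}\le|h|\,|u_0|_{BV}$ and hence $|u(\cdot,t)|_{BV}\le|u_0|_{BV}$ by the usual characterization of the $BV$ seminorm; and the time modulus follows from a mollification/interpolation argument combining the spatial $BV$ bound with $\del_t u=-\Div f(u)+\Levy^\mu[A(u)]$ --- mollifying at scale $\kappa$, using $\|\Div f(u)\|_{L^1}\lesssim\|f'\|_{L^\infty}|u|_{BV}$, bounding the tail part of $\Levy^\mu$ by $\mu(|z|>1)\|u\|_{L^1}+|u|_{BV}\int_{r<|z|<1}|z|\dif\mu$ and the singular part by $\kappa^{-1}|u|_{BV}\int_{|z|\le r}|z|^2\dif\mu$, one gets $\|u(\cdot,t)-u(\cdot,s)\|_{L^1}\lesssim\kappa+|t-s|\bigl(1+\kappa^{-1}\!\int_{|z|\le r}\!|z|^2\dif\mu+\int_{r<|z|<1}\!|z|\dif\mu\bigr)$, and optimizing over $\kappa$ (and $r$) yields $\lesssim|t-s|$ when $\int_{|z|>0}|z|\wedge1\,\dif\mu<\infty$, $\lesssim|t-s|^{1/2}$ in general, and --- using $\int_{|z|\le r}|z|^2\dif\mu\lesssim r^{2-\lambda}$ and $\int_{r<|z|<1}|z|\dif\mu\lesssim r^{1-\lambda}$ (resp.\ $|\ln r|$ for $\lambda=1$) from \eqref{fractional_meas}, with $\kappa\sim r\sim|t-s|^{1/\lambda}$ (resp.\ $r\sim|t-s|$) --- the refined rates $\lesssim|t-s|^{1/\lambda}$ and $\lesssim|t-s|\,|\ln|t-s||$. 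With these uniform bounds $\{u_\nu\}$ is precompact in $C([0,T];L^1_{loc}(\R^d))$, the $L^1$ and $L^\infty$ bounds upgrade this to convergence in $C([0,T];L^1(\R^d))$ along a subsequence, and passing to the limit in \eqref{entropy_ineq} for each fixed $r$ (note $\Levy_r^{\mu_\nu}[\varphi]\to\Levy_r^\mu[\varphi]$ uniformly for smooth $\varphi$) identifies the limit as an entropy solution with the asserted regularity.

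\emph{Main difficulty.} The delicate step is the uniqueness proof, and within it the nonlocal chain-rule/entropy-dissipation estimate for $\Levy_r^\mu$ and the limit $r\to0$ --- the nonlocal counterpart of Carrillo's argument for degenerate parabolic equations, in which the monotonicity of $A$ is indispensable. Once the contraction is in hand, existence and the a priori estimates are comparatively routine. This is exactly the program carried out in \cite{Cifani/Jakobsen}.
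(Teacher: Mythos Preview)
Your proposal is correct and follows essentially the same program as the paper: uniqueness and $L^1$--contraction via the nonlocal Kruzkov/Carrillo doubling argument from \cite{Cifani/Jakobsen}, existence via compactness of approximations, and the time modulus via mollification/interpolation with the splitting $\Levy^\mu=\Levy_r^\mu+\Levy^{\mu,r}+\gamma^{\mu,r}\cdot\nabla$ and optimization over the parameters.

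There is one organizational difference worth noting. For existence you approximate by truncating the L\'evy measure to $\{|z|>\nu\}$ and invoke classical theory for the resulting bounded nonlocal perturbation; the paper instead uses the numerical scheme itself as the approximating family (Theorem~\ref{th:compactness} and the remark following it), passing to the limit in the discrete estimates of Section~\ref{sec:comp}. Correspondingly, the time regularity in the paper is proved first at the discrete level (Lemmas~\ref{lem:time-reg} and~\ref{lem:time-reg_frac}) and then inherited in the limit, whereas you carry out the mollification argument directly for the continuous solution. Both routes are equivalent in spirit --- your truncation $\mu_\nu$ is the continuous analogue of the truncation $\mathbf{1}_{|z|>\frac{\dx}{2}}\mu$ built into the scheme --- and you even mention the scheme as an alternative. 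Your optimization for the fractional rates ($\kappa\sim r\sim|t-s|^{1/\lambda}$, etc.) matches the computation in Lemma~\ref{lem:time-reg_frac}.
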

The last a priori estimate is slightly more general then the one in
\cite{Cifani/Jakobsen}, and follows e.g. in the limit from the
estimates in Lemmas \ref{lem:time-reg} and \ref{lem:time-reg_frac}.
We now recall the new Kuznetsov type of
lemma established in \cite{Alibaud/Cifani/Jakobsen}.
Let
\begin{equation*}
\begin{split}
\omega\in C_{c}^{\infty}(\mathbb{R}),\quad
0\leq\omega\leq1,\quad\text{$\omega(\tau)=0$ for all
$|\tau|>1$,}\quad\text{and}\quad \int_{\mathbb{R}}\omega(\tau)\,\dif
\tau=1,
\end{split}
\end{equation*}
and define
$\omega_{\delta}(\tau)=\frac{1}{\delta}\,\omega\left(\frac{\tau}{\delta}\right)$,
$\Omega_\epsilon(x)=\omega_\epsilon(x_1)\cdots\omega_\epsilon(x_d)$, and
$$\varphi^{\epsilon,\delta}(x,y,t,s)=\Omega_{\epsilon}(x-y)\,\omega_\delta(t-s)$$
for $\epsilon,\delta >0$. We also need
\begin{equation}\label{time_mod}
\begin{split}
\mathcal{E}_\delta(v)=\sup_{\substack{|t-s|<\delta\\
t,s\in[0,T]}}\|v(\cdot,t)-v(\cdot,s)\|_{L^1(\R^d)}.
\end{split}
\end{equation}
In the following we let $\dif w=\dif x\, \dif t\,
\dif y\,\dif s$ and $C_T\geq0$ be a constant depending on time and the 
initial data $u_0$ that may change from line to line. 

\begin{lemma}\label{lem:kuznetsov}\emph{(Kuznetsov type of lemma)}
Assume (A.1) -- (A.4) hold. Let $u$ be the entropy solution of
\eqref{1} and $v$ be any function in 
$L^\infty(Q_{T})\cap
C([0,T];L^1(\R^d))\cap L^\infty(0,T;BV(\R^d)) $ with $v(\cdot\,,0)=v_0(\cdot)$.
Then, for any $\epsilon,r>0$ and $0<\delta<T$,
\begin{equation*}
\begin{split}
&\|u(\cdot,T)-v(\cdot,T)\|_{L^{1}(\mathbb{R}^d)}\leq\|u_0-v_0\|_{L^{1}(\mathbb{R}^d)}+C\,(\epsilon+\mathcal{E}_\delta(u)\vee\mathcal{E}_\delta(v))\\
&\quad-\iint_{Q_{T}}\iint_{Q_{T}}\eta(v(x,t),u(y,s))\,\partial_t\varphi^{\epsilon,\delta}(x,y,t,s)\ \dif w\\
&\quad-\iint_{Q_{T}}\iint_{Q_{T}}q(v(x,t),u(y,s))\cdot\nabla_x\varphi^{\epsilon,\delta}(x,y,t,s)\ \dif w\\
&\quad+\iint_{Q_{T}}\iint_{Q_{T}}\eta(A(v(x,t)),A(u(y,s)))\,\Levy_{r}^{\mu^\ast}[\varphi^{\epsilon,\delta}(x,\cdot,t,s)](y)\ \dif w\\
&\quad-\iint_{Q_{T}}\iint_{Q_{T}}\eta'(v(x,t),u(y,s))\,\Levy^{\mu,r}[A(v(\cdot,t))](x)\,\varphi^{\epsilon,\delta}(x,y,t,s)\ \dif w\\
&\quad-\iint_{Q_{T}}\iint_{Q_{T}}\eta(A(v(x,t)),A(u(y,s)))\,\gamma^{\mu^\ast,r}\cdot\nabla_x\varphi^{\epsilon,\delta}(x,y,t,s)\
\dif w\\
&\quad+\iint_{Q_T}\int_{\R^d} \eta(v(x,T),u(y,s))\,\varphi^{\epsilon,\delta}(x,T,y,s) \ \dif x \, \dif y \, \dif s\\
&\quad-\iint_{Q_T}\int_{\R^d}
\eta(v_0(x),u(y,s))\,\varphi^{\epsilon,\delta}(x,0,y,s) \ \dif x \,
\dif y \, \dif s
\end{split}
\end{equation*}
\end{lemma}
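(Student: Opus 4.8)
The plan is to prove this Kuznetsov-type lemma by a doubling-of-variables argument, comparing the entropy solution $u$ against the arbitrary function $v$. First I would use $v$ (at the point $(x,t)$) as the constant $k$ in the entropy inequality \eqref{entropy_ineq} satisfied by $u$ (at the point $(y,s)$), with test function $\varphi^{\epsilon,\delta}(x,\cdot,\cdot,s)$ in the $y,s$ variables, and then integrate the result over $(x,t)\in Q_T$. This produces, after using the self-adjointness relation $\int \varphi\,\Levy^\mu[\psi]=\int\psi\,\Levy^{\mu^\ast}[\varphi]$ to transfer the $\Levy_r$ term onto the test function, exactly the six volume terms and the two boundary terms ($t=T$ and $t=0$) appearing on the right-hand side, with $\eta(u(y,s),v(x,t))$ and companions in place of the symmetrized entropies. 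The symmetry $\eta(a,b)=\eta(b,a)$, $q(a,b)=-q(b,a)$, and the corresponding identities for $A$ let me relabel these into the stated form.

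The second ingredient is to obtain the left-hand side. Since $v$ is not assumed to be an entropy solution — indeed it need not satisfy any equation — I cannot symmetrically exploit an entropy inequality for $v$. Instead I would invoke the $L^1$-contraction / comparison structure already built into the entropy framework for $u$: by a standard argument (approximating $\eta$, integrating by parts in the local $q\cdot\nabla_x$ and $\del_t$ terms against the remaining $(x,t)$-dependence, and passing $\delta,\epsilon$ through the mollifiers $\omega_\delta,\Omega_\epsilon$), the "diagonal" contributions collapse to $\|u(\cdot,T)-v(\cdot,T)\|_{L^1}$ and $-\|u_0-v_0\|_{L^1}$ up to errors controlled by $\epsilon$ (from the spatial mollification width) and by $\mathcal{E}_\delta(u)\vee\mathcal{E}_\delta(v)$ (from the temporal mollification width, using \eqref{time_mod} and the continuity in time of both $u$ and $v$ in $L^1$). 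This is where the $C(\epsilon+\mathcal{E}_\delta(u)\vee\mathcal{E}_\delta(v))$ term is generated. Crucially, since we only keep the $v$-terms as genuine integrals against $\varphi^{\epsilon,\delta}$ on the right-hand side rather than trying to cancel them, no entropy inequality for $v$ is needed — this is precisely the point of the lemma.

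The main obstacle I expect is the careful bookkeeping of the nonlocal terms under the variable doubling: one must track which of the operators $\Levy_r^{\mu}$, $\Levy^{\mu,r}$, $\gamma^{\mu,r}$ acts in which pair of variables, and ensure that transferring $\Levy_r^\mu$ off $A(u)$ via self-adjointness produces $\Levy_r^{\mu^\ast}$ acting on $\varphi^{\epsilon,\delta}(x,\cdot,t,s)$ in the $y$-variable exactly as written, while the singular "remainder" term $\Levy^{\mu,r}[A(v(\cdot,t))](x)\,\varphi^{\epsilon,\delta}$ stays on $v$ without further manipulation. Care is also needed because $\mu$ is not assumed symmetric, so the drift correction $\gamma^{\mu^\ast,r}\cdot\nabla_x\varphi$ genuinely appears and must be carried through the symmetrization consistently — this asymmetry is exactly the new difficulty over the symmetric $\alpha$-stable case. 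Since the statement is quoted from \cite{Alibaud/Cifani/Jakobsen}, I would either cite that reference for the full technical passage to the limit or reproduce the doubling computation in an appendix, emphasizing that the only properties of $v$ used are $v\in L^\infty\cap C([0,T];L^1)\cap L^\infty(0,T;BV)$ and $v(\cdot,0)=v_0$.
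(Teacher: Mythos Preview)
The paper does not actually prove this lemma: immediately after the statement it simply says ``The proof is given in \cite{Alibaud/Cifani/Jakobsen}.'' Your proposal correctly identifies this, and the doubling-of-variables strategy you outline --- using $v(x,t)$ as the constant $k$ in the entropy inequality for $u$, integrating over $(x,t)$, keeping the $v$-terms uncancelled on the right-hand side since $v$ satisfies no equation, and extracting $\|u(\cdot,T)-v(\cdot,T)\|_{L^1}$ from the diagonal up to $\epsilon$- and $\mathcal{E}_\delta$-errors --- is indeed the standard and correct approach underlying the cited result, so your plan matches what the paper relies on.
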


The proof is given in \cite{Alibaud/Cifani/Jakobsen}. The original
result of result of Kuznetsov in \cite{Kuznetsov} is a special case when
$\mu=0$ (or $A=0$).


\section{The numerical method}\label{sec:num_met}

In this section we derive our numerical method.
Here and in the following sections we focus on the case $f\equiv0$ to simplify
the exposition and focus on the new ideas. The general case $f\neq 0$
will then be treated at the end, in Section \ref{sec:convection_equations}. 

We will consider uniform space/time grids given by
$x_\alpha=\alpha\,\Delta x$ for $\alpha\in\mathbb{Z}^d$ and
$t_n=n\,\Delta t$ for $n=0,\ldots,N=\frac{T}{\Delta t}$. We also use
the following rectangular subdivisions of space
$$R_\alpha=x_\alpha+\Delta x\,(0,1)^d \quad\text{for}\quad \alpha\in\mathbb{Z}^d.$$

We start by discretizing the nonlocal operator, replacing the measure
$\mu$ by the bounded truncated measure 
  $\mathbf{1}_{|z|>\frac\dx2}(z)\mu$ and the gradient by a numerical
  gradient
\begin{align}\label{mag8}
\hat D_{\Delta x}=(\hat D_{1},\cdots,\hat D_{d}),
\end{align}
where $\hat D_{l}\equiv D^\gamma_l$ are upwind finite difference
operators defined by
\begin{equation}\label{diff_operator}
 D^\gamma_l\phi(x)=\left\{
\begin{split}
D^+_l\phi(x):=\frac{\phi(x+\dx\: e_l)-\phi(x)}{\Delta
  x}&\qquad\text{for $\gamma_l^{\mu,\frac{\Delta x}{2}}>0$},\\ 
D^-_l\phi(x):=\frac{\phi(x)-\phi(x-\dx\: e_l)}{\Delta x}&\qquad\text{otherwise}.
\end{split}
\right.
\end{equation}
Here $e_1,\dots,e_d$ is the standard basis of $\R^d$.
This gives an approximate nonlocal operator 
\begin{equation}\label{newB}
\begin{split}
&\hat\Levy^\mu[A(\phi)](x)\\
&=\int_{|z|>\frac{\Delta x}{2}}A(\phi(x+z))-A(\phi(x))\,\dif\mu(z) +\gamma^{\mu,\frac{\Delta x}{2}}\cdot \hat
D_{\Delta x}A(\phi(x)),
\end{split}
\end{equation}
which is monotone by upwinding and non-singular since the truncated
measure is bounded.  

A semidiscrete approximation of \eqref{1} with $f\equiv0$ is then
obtained by solving the approximate equation 
\begin{align}
\label{appr1}
\partial_tu=\hat\Levy^\mu[A(u)],
\end{align}
by a finite volume method on the spacial subdivision
$\{R_\alp\}_\alp$. I.e. for each $t$, we look for piecewise constant
approximate solution
$$U(x,t)=\sum_{\beta\in\Z^d}U_\beta(t)\,\mathbf{1}_{R_\beta}(x),$$
that satisfy \eqref{appr1} in weak form with
$\frac1{\dx^d}\mathbf{1}_{R_\beta}$ as test functions: For every $\alp\in\Z^d$,
\begin{align*}
\frac1{\dx^d}\int_{R_\alp}\partial_tU\, dx =\frac1{\dx^d}\int_{R_\alp}\hat\Levy^\mu[A(U)]\, dx.
\end{align*}

Finally we discretize in time by replacing $\del_t$ by  backward or
forward differences $D^\pm_{\Delta t}$ and $U_\alp(t)$ by a piecewise constant
approximation $U^n_\alp$. The result is the implicit method
\begin{align}
U_{\alpha}^{n+1}&=U_{\alpha}^{n}+\Delta t\, \hat\Levy^\mu\langle
A(U^{n+1})\rangle_\alpha,\label{scheme_implicit}
\end{align}
and the explicit method
\begin{align}
U_{\alpha}^{n+1}&=U_{\alpha}^{n}+\Delta t\, \hat\Levy^\mu\langle
A(U^{n})\rangle_\alpha\label{scheme_explicit}
\end{align}
where
\begin{equation*}\label{dis_non_loc}
\begin{split}
\hat\Levy^\mu\langle A(U^n)\rangle_\alpha
&=\frac{1}{\Delta x^d}\int_{R_{\alpha}}\hat\Levy^\mu[A(\bar
U^n)](x)\ \dif x,
\end{split}
\end{equation*}
and $\bar
U^n(x)=\sum_{\beta\in\Z^d}U_\beta^n\,\mathbf{1}_{R_\beta}(x)$ is a
piecewise constant $x$-interpolation of $U$. As initial condition for
both methods we take
\begin{equation*}
\begin{split}
U_{\alpha}^{0}=\frac{1}{\Delta x^d}\int_{R_{\alpha}}u_{0}(x)\ \dif
x \qquad\text{for all}\qquad \alpha\in\Z^d.
\end{split}
\end{equation*}

\begin{lemma}
\label{lemLG}
\begin{equation*}
\begin{split}
\hat\Levy^\mu\langle A(U^n)\rangle_\alpha=\sum_{\beta\in\Z}G_\beta^\alpha\,A(U^n_\beta)
\end{split}
\end{equation*}
with
$G_\beta^\alpha=G_{\alpha,\beta}+G^{\alpha,\beta}$ and
\begin{equation}\label{weights}
\begin{split}
G_{\alpha,\beta}&=\frac{1}{\Delta
x^d}\int_{R_{\alpha}}\int_{|z|>\frac{\Delta x}{2}}\mathbf{1}_{R_\beta}(x+z)-\mathbf{1}_{R_\beta}(x)\ \dif\mu(z)\,\dif x,\\
G^{\alpha,\beta}&=\sum_{l=1}^d\gamma_l^{\mu,\frac{\Delta
x}{2}}\frac{1}{\Delta
x^d}\int_{R_{\alpha}}D^\gamma_l\mathbf{1}_{R_\beta}(x)\ \dif x.
\end{split}
\end{equation}
\end{lemma}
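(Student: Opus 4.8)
The plan is to substitute the piecewise-constant interpolant $\bar U^n=\sum_{\beta}U^n_\beta\mathbf 1_{R_\beta}$ directly into the definition \eqref{newB} of $\hat\Levy^\mu$ and to exploit that the map $\psi\mapsto\hat\Levy^\mu[\psi]$ is \emph{linear} in its argument $\psi=A(\bar U^n)$ (the singular part of $\mu$ near the origin has been truncated and the gradient replaced by finite differences, so no smoothness of $\psi$ is needed). The whole computation then reduces to evaluating $\hat\Levy^\mu$ on each basis function $\mathbf 1_{R_\beta}$ and re-summing against the coefficients $A(U^n_\beta)$.

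First I would record the elementary identity
$$A(\bar U^n(x))=\sum_{\beta\in\Z^d}A(U^n_\beta)\,\mathbf 1_{R_\beta}(x)\qquad\text{for a.e. }x\in\R^d,$$
which holds because the cubes $\{R_\beta\}$ are pairwise disjoint and cover $\R^d$ up to the Lebesgue-null union of grid hyperplanes, so that for a.e. $x$ exactly one indicator is non-zero. Feeding this into \eqref{newB} and using linearity of $\psi\mapsto\int_{|z|>\frac{\dx}{2}}(\psi(x+z)-\psi(x))\,\dif\mu(z)$ and of the finite-difference operators $D^\gamma_l$ gives
$$\hat\Levy^\mu[A(\bar U^n)](x)=\sum_{\beta}A(U^n_\beta)\Big(\int_{|z|>\frac{\dx}{2}}\!\big(\mathbf 1_{R_\beta}(x+z)-\mathbf 1_{R_\beta}(x)\big)\,\dif\mu(z)+\sum_{l=1}^d\gamma_l^{\mu,\frac{\dx}{2}}D^\gamma_l\mathbf 1_{R_\beta}(x)\Big).$$
Averaging over $R_\alpha$, i.e. applying $\frac1{\dx^d}\int_{R_\alpha}\cdot\,\dif x$, and comparing with \eqref{weights} then identifies the two bracketed contributions as $G_{\alpha,\beta}$ and $G^{\alpha,\beta}$, which yields $\hat\Levy^\mu\langle A(U^n)\rangle_\alpha=\sum_\beta(G_{\alpha,\beta}+G^{\alpha,\beta})A(U^n_\beta)$, the assertion.

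The only step that is not completely formal — and the one I would actually write down — is the justification of interchanging the infinite sum over $\beta$ with the two integrations (in $z$ against the truncated measure, and in $x$ over $R_\alpha$). Two observations suffice: (i) by $(\textbf{A}.3)$ the measure $\mathbf 1_{|z|>\frac{\dx}{2}}\mu$ is \emph{finite}; and (ii) for $(\dif x\otimes\mu)$-a.e. $(x,z)$ on $R_\alpha\times\{|z|>\frac{\dx}{2}\}$ at most two terms of the $\beta$-sum are non-zero, namely those with $x\in R_\beta$ and with $x+z\in R_\beta$, so the partial sums are bounded by $2\sup_\beta|A(U^n_\beta)|$ uniformly, and Tonelli together with dominated convergence applies. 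The gradient term is handled the same way, using $|D^\gamma_l\mathbf 1_{R_\beta}(x)|\le\dx^{-1}\big(\mathbf 1_{R_\beta}(x)+\mathbf 1_{R_\beta}(x\pm\dx\,e_l)\big)$, which again leaves at most two non-zero terms for a.e. $x$. I expect no further obstacle: once linearity and these finiteness facts are in place the lemma is pure bookkeeping.
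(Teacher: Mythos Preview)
Your proposal is correct and follows essentially the same approach as the paper: both substitute the piecewise-constant expansion $A(\bar U^n)=\sum_\beta A(U^n_\beta)\mathbf 1_{R_\beta}$ into the definition of $\hat\Levy^\mu$, use linearity of the integral and finite-difference parts, average over $R_\alpha$, and read off the weights. You additionally justify the interchange of the $\beta$-sum with the integrals, which the paper leaves implicit.
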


\begin{remark}
 $G_{\alpha,\beta}$ is a Toeplitz matrix
 (cf. Lemma \ref{lem:properties} (b)) while
$G^{\alpha,\beta}$ is a tridiagonal matrix. When the
measure $\mu$ is symmetric, then  $G_{\alpha,\beta}$ is symmetric and
$G^{\alpha,\beta}=0$. 
\end{remark}
\begin{proof}
Since
$$A(\bar
U(x))=\sum_{\beta\in\Z^d}A(U_\beta)\,\mathbf{1}_{R_\beta}(x)\qquad\text{and}\qquad\hat 
D_{\dx}\bar U(x)=\sum_{\beta\in\Z^d} 
U_\beta\,\hat D_{\dx}\mathbf{1}_{R_\beta}(x),$$
we find that
\begin{align*}
&\Delta x^d\hat\Levy^\mu\langle A(U)\rangle_\alpha
=\int_{R_{\alpha}}\hat\Levy^\mu[A(\bar
U)](x)\ \dif x\\
&=\int_{R_\alpha}\int_{|z|>\frac{\Delta x}{2}}A(\bar U(x+z))-A(\bar U(x))\ \dif\mu(z)\,\dif x\\
&\qquad\qquad\qquad\qquad+\gamma^{\mu,\frac{\Delta x}{2}}\cdot\int_{R_\alpha}
\sum_{\beta\in\Z^d}A(U_\beta)\,\hat D_{\dx}\mathbf{1}_{R_\beta}(x)\ \dif x\\
&=\sum_{\beta\in\Z^d}A(U_\beta)\left(\int_{R_{\alpha}}
\int_{|z|>\frac{\Delta x}{2}}\mathbf{1}_{R_\beta}(x+z)-\mathbf{1}_{R_\beta}(x)\ 
\dif\mu(z)\,\dif x\right)\\
&\qquad\qquad\qquad\qquad+\sum_{\beta\in\Z^d}A(U_\beta)
\left(\sum_{l=1}^d\gamma_l^{\mu,\frac{\Delta x}{2}}\int_{R_\alpha}D_l^\gamma
\mathbf{1}_{R_\beta}(x)\ \dif x\right).
\end{align*}
The proof is complete.
\end{proof}


\section{Properties of the numerical method}\label{sec:properties}

In this section we show that the numerical methods are conservative,
monotone and consistent in the sense that certain cell entropy
inequalities are satisfied. We start by a technical lemma summarizing
the properties of the weights $G_\beta^\alpha$ defined in
\eqref{weights}. 

\begin{lemma}\ \label{lem:properties}

\noindent (a)
$\sum_{\alpha\in\Z^d}G_\beta^\alpha=\sum_{\alpha\in\Z^d}G_\alpha^\beta=0$
for all $\beta\in\Z^d$.
\smallskip

\noindent (b) $G_\alpha^\beta=G_{\alpha+e_l}^{\beta+e_l}$ for all
$\alpha,\beta\in\Z^d$ and $l=1,\ldots,d$.
\smallskip

\noindent (c) $G_\beta^\beta\leq0$ and
$G_\beta^\alpha\geq 0$ for $\alpha\neq \beta$.
\smallskip 

\noindent (d) There is $\bar c=\bar c(d,\mu)>0$ such that
$G_\beta^\beta\geq-\frac{\bar c}{\hat\sigma_\mu(\dx)}$ and 
where
\begin{align}
\label{sigma0}
\hat\sigma_{\mu}(s)=\begin{cases}
s&\text{when }\int|z|\wedge1\, d\mu(z)<\infty,\\
s^{2}&\text{otherwise}.\end{cases}
\end{align}
\noindent (e)  If \eqref{fractional_meas} holds, then there is $\bar
c=\bar c(d,\lambda)>0$ such that $G_\beta^\beta\geq
-\frac{\bar c}{\hat\sigma_\lambda(\dx)}$ for
\begin{align}
\label{sigma}
\hat\sigma_\lambda(s)=\begin{cases}s^{\lambda}&\text{for }\lambda> 1,\\
\frac{s}{|\ln s|}&\text{for }\lambda= 1,\\
s&\text{for }\lambda< 1.\end{cases}
\end{align}
\end{lemma}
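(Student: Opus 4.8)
The claims (a)--(e) are all concerned with explicit properties of the weights $G_\beta^\alpha=G_{\alpha,\beta}+G^{\alpha,\beta}$, so the plan is to treat the two pieces separately and then combine. For (a), I would exploit the conservative structure: $\sum_\alpha \mathbf 1_{R_\alpha}\equiv 1$, so summing the $x$-integrals over $\alpha\in\Z^d$ and using Fubini, the $z$-integrand $\mathbf 1_{R_\beta}(x+z)-\mathbf 1_{R_\beta}(x)$ integrates to zero in $x$; similarly the discrete gradient of a constant vanishes, giving $\sum_\alpha G^{\alpha,\beta}=0$. The $\sum_\alpha G_\alpha^\beta=0$ identity follows the same way after noting $D^\gamma_l$ applied to $\mathbf 1_{R_\beta}$ summed over $\beta$ telescopes to zero, or alternatively by a change of variables in the $\mu$-integral. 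For (b), translation invariance of Lebesgue measure and of the grid $\{R_\alpha\}$ under $x\mapsto x+\dx\,e_l$ shifts both $R_\alpha\mapsto R_{\alpha+e_l}$ and $R_\beta\mapsto R_{\beta+e_l}$ simultaneously, which is exactly the Toeplitz property; the difference operators $D^\gamma_l$ are themselves translation invariant, so $G^{\alpha,\beta}$ inherits it too.

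For (c), the sign of the off-diagonal entries comes from monotonicity of the scheme, which was built in by upwinding: for $\beta\neq\alpha$, the term $\mathbf 1_{R_\beta}(x)$ never contributes (it vanishes for $x\in R_\alpha$), so $G_{\alpha,\beta}=\frac1{\dx^d}\int_{R_\alpha}\mu(\{z:x+z\in R_\beta\})\,\dif x\ge 0$, and the upwind choice of $D^\pm_l$ according to the sign of $\gamma_l^{\mu,\dx/2}$ guarantees $\gamma_l^{\mu,\dx/2}D^\gamma_l\mathbf 1_{R_\beta}(x)\ge 0$ for $x\in R_\alpha$ when $\beta\neq\alpha$ (the relevant neighbour contributes with the right sign, the diagonal is the only negative one). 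Then $G_\beta^\beta\le 0$ follows immediately from (a): $G_\beta^\beta=-\sum_{\alpha\neq\beta}G_\beta^\alpha\le 0$.

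The substance of the lemma is the quantitative lower bound on $-G_\beta^\beta$ in (d) and (e); this is the main obstacle. Here I would write $-G_\beta^\beta=-G_{\beta,\beta}-G^{\beta,\beta}$ and estimate each. The diagonal local part is $-G_{\beta,\beta}=\frac1{\dx^d}\int_{R_\beta}\mu(\{z:|z|>\tfrac\dx2,\ x+z\notin R_\beta\})\,\dif x\le \mu(\{|z|>\tfrac\dx2\})$, and by (A.3) this is bounded by $\frac{C}{\dx^2}\int_{|z|>\dx/2}(|z|^2\wedge 1)\,\dif\mu(z)+\mu(\{|z|>1\})\le C\dx^{-2}$ in general, or by $C\dx^{-1}$ when $\int|z|\wedge 1\,\dif\mu<\infty$; under \eqref{fractional_meas} one splits $\int_{\dx/2<|z|<1}|z|^{-d-\lambda}\dif z$ which gives the three regimes $\dx^{-\lambda}$, $|\ln\dx|\,\dx^{-1}$, $\dx^{-1}$ according to $\lambda\gtrless 1$ — a routine but careful computation in polar coordinates. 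The diagonal gradient part satisfies $|G^{\beta,\beta}|\le \sum_l|\gamma_l^{\mu,\dx/2}|\cdot\frac1\dx$, and $|\gamma_l^{\mu,\dx/2}|=\big|\int_{\dx/2<|z|\le1}z_l\,\dif\mu(z)\big|\le\int_{\dx/2<|z|\le1}|z|\,\dif\mu(z)$, which is $O(1)$ when $\int|z|\wedge1\,\dif\mu<\infty$ (giving $O(\dx^{-1})$, consistent with $\hat\sigma_\mu(s)=s$), is $O(\dx^{-1}\int(|z|^2\wedge1)\dif\mu)=O(\dx^{-1})$ in general (dominated by the $\dx^{-2}$ from the local part), and under \eqref{fractional_meas} is bounded by $\int_{\dx/2<|z|<1}|z|^{1-d-\lambda}\dif z=O(1)$, $O(|\ln\dx|)$, $O(\dx^{1-\lambda})$ for $\lambda<1,=1,>1$, so after multiplying by $\dx^{-1}$ this is always dominated by the corresponding $\hat\sigma_\lambda(\dx)^{-1}$ rate. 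Collecting the dominant term in each case yields the stated bounds with $\bar c$ depending only on $d$ and $\mu$ (resp.\ $d$ and $\lambda$). The delicate point to get right is the bookkeeping near $|z|\approx 1$ and $|z|\approx\dx/2$ and checking that the gradient correction never dominates the principal local term, but none of it is conceptually hard once the splitting is set up.
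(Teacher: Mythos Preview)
Your proposal is correct and follows essentially the same route as the paper: split $G_\beta^\alpha=G_{\alpha,\beta}+G^{\alpha,\beta}$, use Fubini/translation invariance for (a)--(b), upwinding for the signs in (c) (the paper verifies $G_\beta^\beta\le 0$ directly rather than via (a), but your deduction from (a) is equally valid), and for (d)--(e) bound $-G_{\beta,\beta}\le\mu(\{|z|>\tfrac{\dx}{2}\})$ and $|G^{\beta,\beta}|\le\dx^{-1}\sum_l|\gamma_l^{\mu,\dx/2}|$ separately with polar-coordinate computations. One small slip to fix in your write-up: the integral $\int_{\dx/2<|z|<1}|z|^{-d-\lambda}\,\dif z$ is $O(\dx^{-\lambda})$ for \emph{all} $\lambda\in(0,2)$, not three regimes --- the three-regime structure of $1/\hat\sigma_\lambda(\dx)$ in the final bound actually comes from the gradient piece $|\gamma_l^{\mu,\dx/2}|/\dx\sim\dx^{-1}\int_{\dx/2}^1 r^{-\lambda}\,dr$, which dominates for $\lambda\le 1$; since $\dx^{-\lambda}\le 1/\hat\sigma_\lambda(\dx)$ always, this does not affect the conclusion.
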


\begin{proof} (a) By the definitions of
$G_{\alpha,\beta},G^{\alpha,\beta}$ and Fubini's theorem, 
\begin{align*}
\Delta x^d\sum_{\alpha\in\Z^d}G_{\alpha,\beta}&=\int_{|z|>\frac{\Delta
    x}{2}}\Big(\int_{\R^d}\mathbf{1}_{R_\beta}(x+z)\ \dif
x-\int_{\R^d}\mathbf{1}_{R_\beta}(x)\ \dif x\Big)\,\dif\mu(z)=0,\\
\Delta
x^d\sum_{\alpha\in\Z^d}G^{\alpha,\beta}&=\pm\sum_{l=1}^d\frac{\gamma_l^{\mu,\frac{\Delta 
x}{2}}}{\dx}\Big(\int_{\R^d}\mathbf{1}_{R_\beta}(x\pm\dx\:e_l)\dif x-\int_{\R^d}\mathbf{1}_{R_\beta}(x)\dif x\Big)=0,
\end{align*}
and, since $\sum_{\beta\in\Z^d}\mathbf{1}_{R_\beta}(x)\equiv 1$,
\begin{align*}
\Delta
x^d\sum_{\beta\in\Z^d}G_{\alpha,\beta}&=\int_{R_\alpha}\int_{|z|>\frac{\Delta
x}{2}}\Big(\sum_{\beta\in\Z^d}\mathbf{1}_{R_\beta}(x+z)-\sum_{\beta\in\Z^d}\mathbf{1}_{R_\beta}(x)\Big)\
\dif\mu(z)\,\dif x=0,\\
\Delta
x^d\sum_{\beta\in\Z^d}G^{\alpha,\beta}&=\pm\sum_{l=1}^d\frac{\gamma_l^{\mu,\frac{\Delta 
x}{2}}}{\dx}\int_{R_\alpha}\Big(\sum_{\beta\in\Z^d}\mathbf{1}_{R_\beta}(x\pm\dx\:e_l)-\sum_{\beta\in\Z^d}\mathbf{1}_{R_\beta}(x)\Big)\dif x=0.
\end{align*}
Therefore
$\sum_{\alpha\in\Z^d}G_\beta^\alpha=\sum_{\alpha\in\Z^d}\left(G_{\alpha,\beta}+G^{\alpha,\beta}\right)=0$ 
and $\sum_{\beta\in\Z^d}G_\beta^\alpha=0$.

\medskip

\noindent (b) Let $y=x+e_l$ and note that
\begin{equation*}
\begin{split}
\Delta
x^d\ G_{\alpha,\beta}
&=\int_{R_{\alpha+e_l}}\int_{|z|>\frac{\Delta
x}{2}}\mathbf{1}_{R_\beta}(y-\dx\, e_l+z)-\mathbf{1}_{R_\beta}(y-\dx\,  e_l)\
\dif\mu(z)\,\dif y\\
&=\int_{R_{\alpha+e_l}}\int_{|z|>\frac{\Delta
x}{2}}\mathbf{1}_{R_{\beta+e_l}}(y+z)-\mathbf{1}_{R_{\beta+e_l}}(y)\
\dif\mu(z)\,\dif y\\
&=\Delta
x^d\ G_{\beta+e_l,\alpha+e_l}.
\end{split}
\end{equation*}
In a similar fashion we get $G^{\alpha,\beta}=G^{\beta+e_l,\alpha+e_l}$.

\medskip

\noindent (c) 
Note that
\begin{align*}
\Delta
x^d\ G_{\beta,\beta}&=\int_{R_{\beta}}\int_{|z|>\frac{\Delta
x}{2}}\mathbf{1}_{R_{\beta}}(x+z)-1\ \dif\mu(z)\ \dif x\leq 0.\\
\end{align*}
while by the definition $D^\gamma_l$, see \eqref{diff_operator},
\begin{align*}
\Delta
x^d\ G^{\beta,\beta}
&=-\sum_{l=1}^d\gamma_l^{\mu,\frac{\Delta
x}{2}}\,\mathrm{sgn}\left(\gamma_l^{\mu,\frac{\Delta
x}{2}}\right)\,\int_{R_\beta}\frac{\mathbf{1}_{R_\beta}(x)}{\Delta x}\ \dif x\leq 0.
\end{align*}
For $\alpha\neq \beta$,
\begin{equation*}
\begin{split}
\Delta
x^d\ G_{\alpha,\beta}&=\int_{R_{\alpha}}\int_{|z|>\frac{\Delta
x}{2}}\mathbf{1}_{R_{\beta}}(x+z)\ \dif\mu(z)\, \dif x\geq 0,
\end{split}
\end{equation*}
$G^{\alpha,\beta}=0$ for $\alpha\neq\beta\pm e_l$, and by the
definition of $D^\gamma_l$, 
\begin{equation*}
\Delta
x^d\ G^{\beta\pm e_l,\beta}=
\sum_{l=1}^d\gamma_l^{\mu,\frac{\Delta
x}{2}}\,\mathrm{sgn}\left(\gamma_l^{\mu,\frac{\Delta
x}{2}}\right)\,\int_{R_\beta\pm e_l}\frac{\mathbf{1}_{R_\beta}(x\pm\dx\:e_l)}{\Delta
x}\ \dif x\geq 0.
\end{equation*}
Therefore $G_\beta^\beta=G_{\beta,\beta}+G^{\beta,\beta}\leq 0$ and
$G_\beta^\alpha=G_{\alpha,\beta}+G^{\alpha,\beta}\geq 0$ for $\alpha\neq \beta$.
\smallskip

\noindent (d) To find the lower bound on  $G^\beta_\beta$ we note that
$\int_{R_{\beta}}\mathbf{1}_{R_{\beta}}(x+z)-\mathbf{1}_{R_{\beta}}(x)\
\dif x\geq -\dx^d$, 
and hence 
\begin{align*}
 G_{\beta,\beta}&\geq-\int_{|z|>\frac{\Delta
x}{2}} \ \dif\mu(z)\geq
-\int_{|z|<1}\bigg(\bigg(\frac{|z|}{\frac{\dx}{2}}\bigg)^2\mathbf{1}_{|z|<1}(z)+
\mathbf{1}_{|z|>1}(z)\bigg)\ \dif \mu(z).
\end{align*}
The bound then follows since
$$\dx\, G^{\beta,\beta}\geq -d\int_{\frac{\dx}2<|z|<1}|z|\ \dif\mu(z)\geq -d\int_{0<|z|<1}\frac{|z|^2}{\frac{\dx}2}\ \dif\mu(z).$$
When $\int|z|\wedge1\,d\mu(z)<\infty$, the corresponding bound follows by a similar argument.
\smallskip

\noindent (e) When \eqref{fractional_meas} hold we can estimate
$G_{\beta,\beta}$ in the following way
\begin{align*}
 G_{\beta,\beta}&\geq
-\int_{\frac\dx2<|z|<1}\frac{|z|}{\frac{\dx}{2}}\frac{c_\lambda\dif
  z}{|z|^{d+\lambda}}-\int_{|z|>1}\dif \mu(z)\\
&= -\begin{cases}
c_\lambda\frac{2}{\dx}\frac{\sigma_d}{1-\lambda}\Big(1-\big(\frac{\dx}{2}\big)^{1-\lambda}\Big)+C&\text{for
} \lambda\neq 1,\\[0.2cm]
-c_\lambda\frac{2}{\dx}\sigma_d\ln\frac{\dx}{2} +C& \text{for
} \lambda= 1.
\end{cases}
\end{align*}
The last equality can be proved using polar coordinates, and $\sigma_d$ is
the surface area of the unit sphere in $\R^d$. Similarly we find that
$$\dx\, G^{\beta,\beta}\geq -d\int_{\frac{\dx}2<|z|<1}|z|\ \frac{c_\lambda\dif z}{|z|^{d+\lambda}}= -dc_\lambda\begin{cases}
\frac{\sigma_d}{1-\lambda}\Big(1-\big(\frac{\dx}{2}\big)^{1-\lambda}\Big)&\text{for
} \lambda\neq 1,\\[0.2cm]
-\sigma_d\ln\frac{\dx}{2} & \text{for
} \lambda= 1,
\end{cases}$$
and since $\Big(1-(\frac{\dx}{2})^{1-\lambda}\Big)$
is less than $1$ or
$(\frac{\dx}{2})^{1-\lambda}$ when $\lambda<1$ or
$\lambda>1$ respectively (and when $\dx<2$), the proof is complete.
\end{proof}

From the two facts that $G_\alpha^\beta\geq0$ when $\alpha\neq\beta$ and
$\mathrm{sgn}(u)A(u)=|A(u)|$, we now immediately get a Kato type
inequality for the discrete nonlocal operator \eqref{dis_non_loc}. 
\begin{lemma}\label{lem:app1}(Discrete Kato inequality)
If $\left\{u_\alpha,v_\alpha\right\}_{\alpha\in\Z^d}$ are two bounded
sequences, then
$$\mathrm{sgn}(u_\alpha-v_\alpha)\sum_{\beta\in\Z^d}G_\beta^\alpha(A(u_\beta)-A(v_\beta))\leq\sum_{\beta\in\Z^d}G_\beta^\alpha\left|A(u_\beta)-A(v_\beta)\right|.$$
\end{lemma}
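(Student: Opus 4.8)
The plan is to split the sum over $\beta$ into the diagonal term $\beta = \alpha$ and the off-diagonal terms $\beta \neq \alpha$, exploiting the sign information from Lemma \ref{lem:properties}(c): $G_\alpha^\alpha \leq 0$ and $G_\beta^\alpha \geq 0$ for $\beta \neq \alpha$. For the off-diagonal terms, since $G_\beta^\alpha \geq 0$ we immediately have $\mathrm{sgn}(u_\alpha - v_\alpha)\,G_\beta^\alpha(A(u_\beta) - A(v_\beta)) \leq G_\beta^\alpha|A(u_\beta) - A(v_\beta)|$ by the trivial bound $\mathrm{sgn}(c)\,w \leq |w|$ applied with $w = A(u_\beta) - A(v_\beta)$. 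So the off-diagonal part of the left-hand side is already bounded by the off-diagonal part of the right-hand side.

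The work is therefore concentrated at the diagonal. Here I want to show
$$\mathrm{sgn}(u_\alpha - v_\alpha)\,G_\alpha^\alpha(A(u_\alpha) - A(v_\alpha)) \leq G_\alpha^\alpha|A(u_\alpha) - A(v_\alpha)|.$$
Since $G_\alpha^\alpha \leq 0$, this is equivalent (dividing by $G_\alpha^\alpha$ and flipping the inequality, or just rearranging) to
$$\mathrm{sgn}(u_\alpha - v_\alpha)(A(u_\alpha) - A(v_\alpha)) \geq |A(u_\alpha) - A(v_\alpha)|,$$
hence to the equality $\mathrm{sgn}(u_\alpha - v_\alpha)(A(u_\alpha) - A(v_\alpha)) = |A(u_\alpha) - A(v_\alpha)|$. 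This is exactly the monotonicity of $A$: since $A$ is non-decreasing (assumption (\textbf{A}.2)), $A(u_\alpha) - A(v_\alpha)$ has the same sign as $u_\alpha - v_\alpha$ (and vanishes when $u_\alpha = v_\alpha$), which is the statement $\mathrm{sgn}(u_\alpha - v_\alpha)(A(u_\alpha) - A(v_\alpha)) = |A(u_\alpha) - A(v_\alpha)|$. This is the step flagged in the lead-in to the lemma as "$\mathrm{sgn}(u)A(u) = |A(u)|$" (applied to the difference $A(u_\alpha) - A(v_\alpha)$ and the sign of $u_\alpha - v_\alpha$).

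Adding the diagonal and off-diagonal estimates gives the claim. I do not expect any real obstacle: the only point requiring care is that the series converge absolutely so that the splitting and term-by-term comparison are legitimate — this follows from boundedness of $\{u_\alpha\}, \{v_\alpha\}$, Lipschitz continuity of $A$ (assumption (\textbf{A}.2)), and the summability $\sum_\beta |G_\beta^\alpha| = -2 G_\alpha^\alpha < \infty$ which follows from Lemma \ref{lem:properties}(a),(c),(d). Apart from that bookkeeping, the proof is a one-line sign argument, which is why the authors state it follows "immediately."
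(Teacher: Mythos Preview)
Your proof is correct and follows exactly the approach indicated in the paper: split into the diagonal term (handled by the monotonicity of $A$, i.e.\ $\mathrm{sgn}(u_\alpha-v_\alpha)(A(u_\alpha)-A(v_\alpha))=|A(u_\alpha)-A(v_\alpha)|$) and the off-diagonal terms (handled by $G_\beta^\alpha\geq0$ for $\beta\neq\alpha$). Your remark on absolute summability via Lemma~\ref{lem:properties}(a),(c),(d) is a welcome bit of rigor that the paper leaves implicit.
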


From Lemma \ref{lem:properties} it also follows that the explicit method
\eqref{scheme_explicit} and the implicit method
\eqref{scheme_implicit} are conservative and monotone, at least when the
explicit method satisfies the following CFL condition:
\begin{align}
\bar c L_A\,\frac{\Delta t}{\hat\sigma_\mu(\Delta
  x)}<1\qquad\text{where $\hat\sigma_\mu$
is defined in \eqref{sigma0}}.
\label{cfl}
\end{align}
Here $\bar c$ is defined in Lemma \ref{lem:properties}, and $L_A$ denotes the Lipschitz constant of $A$. When the L\'{e}vy
measure $\mu$ also satisfies \eqref{fractional_meas}, we have a weaker
CFL condition
\begin{equation}\label{CFLj}
\begin{split}
\bar cL_A\frac{\Delta t}{\hat\sigma_\lambda(\Delta
  x)}<1\qquad\text{where $\hat\sigma_\lambda$ 
is defined in \eqref{sigma}}.
\end{split}
\end{equation}

\begin{proposition}[Conservative monotone schemes]\ 
\label{prop:CM}

\noindent (a) The implicit and explicit methods \eqref{scheme_implicit} and
\eqref{scheme_explicit} are conservative, i.e. for an $l^1$-solution $U$,
$$\sum_\alp U^{n}_\alp = \sum_\alp U^{0}_\alp.$$

\noindent (b) The implicit method is monotone, i.e. if $U$
and $V$ solve \eqref{scheme_implicit}, then
$$U^n\leq V^n \qquad\Rightarrow\qquad U^{n+1}\leq V^{n+1}
\quad\text{for}\quad n\geq0.$$ 

\noindent (c) If \eqref{cfl} (or \eqref{CFLj} and
\eqref{fractional_meas}) holds, then the explicit method
\eqref{scheme_explicit} is monotone. 
\end{proposition}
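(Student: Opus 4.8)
\textbf{Proof plan for Proposition \ref{prop:CM}.}

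The plan is to prove the three parts in order, each resting directly on the structural properties of the weights $G_\beta^\alpha$ collected in Lemma \ref{lem:properties}. For part (a), I would sum the scheme \eqref{scheme_implicit} (resp. \eqref{scheme_explicit}) over $\alpha\in\Z^d$. Using Lemma \ref{lemLG} this gives $\sum_\alpha U^{n+1}_\alpha = \sum_\alpha U^n_\alpha + \Delta t \sum_\alpha \sum_\beta G_\beta^\alpha A(U^{n+1}_\beta)$ (or with $U^n$ in the explicit case), and interchanging the order of summation -- justified since $U$ is an $l^1$-solution and the $G_\beta^\alpha$ are summable in $\alpha$ with $\sum_\alpha |G_\beta^\alpha|$ bounded uniformly in $\beta$ by Lemma \ref{lem:properties}(b),(c),(d) -- the inner sum $\sum_\alpha G_\beta^\alpha$ vanishes for every $\beta$ by Lemma \ref{lem:properties}(a). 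Hence $\sum_\alpha U^{n+1}_\alpha = \sum_\alpha U^n_\alpha$, and iterating down to $n=0$ gives the claim. The only mild care needed is the absolute convergence required to swap the two sums; this follows because $A$ is Lipschitz with $A(0)=0$ so $|A(U^n_\beta)|\leq L_A |U^n_\beta|$ and $\{U^n_\beta\}_\beta\in l^1$.

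For part (b), monotonicity of the implicit method, I would argue by contradiction or, more cleanly, directly. Suppose $U^n\leq V^n$ and write the scheme as $(I - \Delta t\, \mathcal{G}A)(U^{n+1}) = U^n$ where $\mathcal{G}$ is the operator with matrix $G_\beta^\alpha$. Rearranging, $U^{n+1}_\alpha - \Delta t\, G_\alpha^\alpha A(U^{n+1}_\alpha) = U^n_\alpha + \Delta t \sum_{\beta\neq\alpha} G_\beta^\alpha A(U^{n+1}_\beta)$. Since $G_\alpha^\alpha\leq 0$ by Lemma \ref{lem:properties}(c) and $A$ is non-decreasing, the left-hand side is a non-decreasing function of $U^{n+1}_\alpha$; call it $\Phi(U^{n+1}_\alpha)$, which is strictly increasing and hence invertible. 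Subtracting the analogous identity for $V$ and taking the index $\alpha_0$ where $W^{n+1}:=U^{n+1}-V^{n+1}$ attains (or approaches) its positive supremum, one uses $G_\beta^\alpha\geq 0$ for $\beta\neq\alpha$ together with monotonicity of $A$ to conclude $\Phi(U^{n+1}_{\alpha_0}) - \Phi(V^{n+1}_{\alpha_0}) \leq (U^n_{\alpha_0} - V^n_{\alpha_0}) + \Delta t\sum_{\beta\neq\alpha_0}G_\beta^{\alpha_0}(A(U^{n+1}_\beta)-A(V^{n+1}_\beta)) \leq 0 + 0 = 0$ (bounding $A(U^{n+1}_\beta)-A(V^{n+1}_\beta)$ above using the supremum), forcing $U^{n+1}_{\alpha_0}\leq V^{n+1}_{\alpha_0}$, hence $W^{n+1}\leq 0$. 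I expect this comparison-at-the-supremum step to be the main technical obstacle, because on $\Z^d$ the supremum need not be attained; one must either invoke the $l^\infty$ bound together with an $\varepsilon$-approximate maximizer argument, or appeal to an existing well-posedness/comparison result for the implicit scheme established in Section \ref{sec:comp}. I would present the clean $\varepsilon$-maximizer version.

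For part (c), monotonicity of the explicit method under the CFL condition, the argument is more direct. One views $U^{n+1}_\alpha = U^n_\alpha + \Delta t\sum_\beta G_\beta^\alpha A(U^n_\beta) =: H_\alpha(\{U^n_\gamma\}_\gamma)$ and checks that $H_\alpha$ is non-decreasing in each argument $U^n_\gamma$. For $\gamma\neq\alpha$ this is immediate from $G_\gamma^\alpha\geq 0$ (Lemma \ref{lem:properties}(c)) and $A$ non-decreasing. For $\gamma=\alpha$, $\frac{\partial}{\partial U^n_\alpha}$ (formally) gives $1 + \Delta t\, G_\alpha^\alpha A'(U^n_\alpha)$; since $G_\alpha^\alpha\in[-\bar c/\hat\sigma_\mu(\dx),0]$ by Lemma \ref{lem:properties}(c),(d) and $0\leq A'\leq L_A$, this is bounded below by $1 - \bar c L_A \Delta t/\hat\sigma_\mu(\dx) > 0$ exactly under \eqref{cfl}. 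Since $A$ is merely Lipschitz I would phrase this without derivatives: $H_\alpha(U) - H_\alpha(V) = (U^n_\alpha - V^n_\alpha) + \Delta t\, G_\alpha^\alpha(A(U^n_\alpha) - A(V^n_\alpha)) + \Delta t\sum_{\beta\neq\alpha}G_\beta^\alpha(A(U^n_\beta)-A(V^n_\beta))$, and for $U^n\geq V^n$ the first two terms combine to $\geq (1 - \bar c L_A\Delta t/\hat\sigma_\mu(\dx))(U^n_\alpha - V^n_\alpha)\geq 0$ using $0\leq A(U^n_\alpha)-A(V^n_\alpha)\leq L_A(U^n_\alpha - V^n_\alpha)$ and the sign of $G_\alpha^\alpha$, while the sum is $\geq 0$ termwise. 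Under \eqref{fractional_meas} one repeats the argument with the sharper bound $G_\alpha^\alpha\geq -\bar c/\hat\sigma_\lambda(\dx)$ from Lemma \ref{lem:properties}(e), giving the weaker CFL \eqref{CFLj}. This completes the proof.
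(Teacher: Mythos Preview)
Your arguments for (a) and (c) match the paper's proof essentially verbatim (the paper also computes $\partial_{u_\beta}T_\alpha$ and invokes Lemma~\ref{lem:properties}(a),(c),(d)); your derivative-free reformulation in (c) is a nice touch since $A$ is only Lipschitz.

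There is, however, a genuine gap in your plan for (b). At the index $\alpha_0$ where $W^{n+1}=U^{n+1}-V^{n+1}$ is maximal you claim
\[
\Delta t\sum_{\beta\neq\alpha_0}G_\beta^{\alpha_0}\bigl(A(U^{n+1}_\beta)-A(V^{n+1}_\beta)\bigr)\leq 0,
\]
but this is not true in general: the coefficients $G_\beta^{\alpha_0}$ are nonnegative for $\beta\neq\alpha_0$, while the differences $A(U^{n+1}_\beta)-A(V^{n+1}_\beta)$ can be positive at many indices $\beta$ (wherever $W^{n+1}_\beta>0$). ``Bounding above using the supremum'' only yields $A(U^{n+1}_\beta)-A(V^{n+1}_\beta)\leq L_A\sup W^{n+1}$, and feeding this back gives $\sup W^{n+1}\leq \Delta t\,L_A\,(-G_{\alpha_0}^{\alpha_0})\sup W^{n+1}$, which is a contradiction \emph{only under a CFL condition} --- precisely what the implicit method should not require. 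The underlying issue is that the maximum of $U^{n+1}-V^{n+1}$ need not coincide with the maximum of $A(U^{n+1})-A(V^{n+1})$ when $A$ is nonlinear.

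Two clean repairs: (i) run your maximum-principle argument at the supremum of $A(U^{n+1})-A(V^{n+1})$ instead; then subtracting this maximal value via $\sum_\beta G_\beta^{\alpha_0}=0$ (Lemma~\ref{lem:properties}(a)) makes every term in the off-diagonal sum nonpositive, giving $W^{n+1}_{\alpha_0}\leq W^n_{\alpha_0}\leq 0$ (a short separate check handles indices where $A(U^{n+1})=A(V^{n+1})$ but $U^{n+1}>V^{n+1}$). Alternatively (ii) argue in $l^1$: multiply the difference of the schemes by $\sgn^+(W^{n+1}_\alpha)$, use the positive-part analogue of Lemma~\ref{lem:app1}, sum over $\alpha$, and invoke $\sum_\alpha G_\beta^\alpha=0$ to conclude $\sum_\alpha(W^{n+1}_\alpha)^+\leq\sum_\alpha(W^n_\alpha)^+=0$. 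The paper itself is terse here (``similar to and easier than (c)''), the point being that for the implicit operator $S_\alpha[u]=u_\alpha-\Delta t\sum_\beta G_\beta^\alpha A(u_\beta)$ one has $\partial_{u_\alpha}S_\alpha\geq 1$ without any CFL restriction; comparison then follows, e.g., from the monotone contraction constructed in Section~\ref{sec:comp}.
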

\begin{remark} 
The CFL condition \eqref{cfl} implies that $\frac{\dt}{\dx^2}\leq C$
in general (just as for the heat equation), and
$\frac{\dt}{\dx}\leq C$ when $\int|z|\wedge1\,d\mu(z)<\infty$. Condition \eqref{cfl} is sufficient for all 
equations considered in this paper. In real applications however,
typically \eqref{fractional_meas} holds, and the superior
CFL condition \eqref{CFLj} should be used.
\end{remark}

\begin{proof}
(a) Sum \eqref{scheme_implicit} or \eqref{scheme_explicit} over
$\alpha$, change the order 
of summation, and use Lemma \ref{lem:properties} (a):
\begin{equation*}
\begin{split}
\sum_{\alpha\in\Z^d}U^{n+1}_\alpha&=\sum_{\alpha\in\Z^d}U^{n}_\alpha+\dt\sum_{\beta\in\Z^d}A(U_\beta)\bigg(\sum_{\alpha\in\Z^d}G_\beta^\alpha\bigg)
=\sum_{\alpha\in\Z^d}U^{n}_\alpha.
\end{split}
\end{equation*}

\noindent (c) Let
$T_\alpha[u]=u_\alpha+\Delta
t\sum_{\beta\in\Z^d}G_\beta^\alpha\,A(u_{\beta})$, the right hand side
of \eqref{scheme_explicit}. By Lemma \ref{lem:properties}
(c), $G^\alpha_\beta\geq0$ for $\alpha\neq\beta$ and hence
\begin{equation*}
\partial_{u_\beta}T_\alpha[u]\geq 0\quad\text{for}\quad\beta\neq \alpha.
\end{equation*}
Since $A$ non-decreasing and $G_\alpha^\alpha\leq 0$, we use the lower
bound on $G_\alpha^\alpha$ in Lemma \ref{lem:properties} (c) to find that
\begin{equation*}
\begin{split}
&\partial_{u_\alpha}T_\alpha[u]=1+\Delta
t\,G_\alpha^\alpha\,A'(u_{\alpha})\geq 1-\bar c L_A\frac{\dt}{\hat\sigma_\mu(\dx)},
\end{split}
\end{equation*}
which is positive by the CFL condition \eqref{cfl}.

\smallskip
\noindent (b) The proof is similar to and easier than the proof of (c).
\end{proof}

We then turn to checking the
consistency of the method, and to do that we write
$G_{\alpha,\beta}=G_{\alpha,\beta}^r+G_{\alpha,\beta,r}$ and 
$G^{\alpha,\beta}=G^{\alpha,\beta,r}+G^{\alpha,\beta}_r$
for $r>0$ where
\begin{equation*}
\begin{split}
G_{\alpha,\beta}^r&=\frac1{\dx^d}\int_{R_{\alpha}}\int_{\frac{\Delta x}{2}<|z|\leq r}\mathbf{1}_{R_\beta}(x+z)-\mathbf{1}_{R_\beta}(x)\ \dif\mu(z)\,\dif x,\\
G_{\alpha,\beta,r}&=\frac1{\dx^d}\int_{R_{\alpha}}\int_{|z|>
r}\mathbf{1}_{R_\beta}(x+z)-\mathbf{1}_{R_\beta}(x)\
\dif\mu(z)\,\dif x,\\
G^{\alpha,\beta,r}&=\frac1{\dx^d}\sum_{l=1}^d\gamma_{l,r}^{\mu,\frac{\Delta
x}{2}}\int_{R_{\alpha}}D^{\gamma_r}_l\mathbf{1}_{R_\beta}(x)\ \dif x\\
&\text{for}\quad\gamma^{\mu,\frac{\Delta x}{2}}_{l,r}=-\int_{\frac{\Delta
x}{2}<|z|\leq r}z_l\textbf{1}_{|z|\leq1}\ \dif\mu(z),\\
G^{\alpha,\beta}_r&=\frac1{\dx^d}\sum_{l=1}^d\gamma_{l}^{\mu,r}\int_{R_{\alpha}}D^{\gamma^r}_l\mathbf{1}_{R_\beta}(x)\
\dif x.
\end{split}
\end{equation*}
If $r<\frac\dx2$, we set $G^r_{\alp,\beta}=0=G^{\alp,\beta,r}$. We
also define
$$G_\alpha^{\beta,r}=G_{\alpha,\beta}^r+G^{\alpha,\beta,r}\quad\text{and}\quad
G_{\alpha,r}^\beta=G_{\alpha,\beta,r}+G^{\alpha,\beta}_r,$$
and note that Lemmas
\ref{lem:properties} and \ref{lem:app1} obviously still holds with
$G_\alpha^{\beta,r}$ or 
$G_{\alpha,r}^\beta$ replacing $G_{\alpha}^\beta$. 

\begin{proposition}\label{th:conv}\emph{(Cell-entropy inequalities)}

\noindent (a) If $U$ is a solution of the implicit method
\eqref{scheme_implicit}, then, for all $r>0$ and $k\in\R$, 
\begin{equation}\label{j4}
\begin{split}
\eta(U_{\alpha}^{n+1},k)\leq\eta(U_{\alpha}^{n},k)
&+\Delta t\sum_{\beta\in\Z^d}G_\beta^{\alpha,r}\,\eta(A(U^{n+1}_{\beta}),A(k))\\
&+\Delta t\,
\eta'(U^{n+1}_\alpha,k)\sum_{\beta\in\Z^d}G_{\beta,r}^\alpha\,A(U^{n+1}_{\beta}).
\end{split}
\end{equation}

\noindent (b) Assume the CFL condition \eqref{cfl} (or \eqref{CFLj} and
\eqref{fractional_meas}) holds. If $U$ is a solution of the explicit method
\eqref{scheme_explicit}, then, for all $r>0$ and $k\in\R$,
\begin{equation}\label{u1}
\begin{split}
\eta(U_{\alpha}^{n+1},k)\leq\eta(U_{\alpha}^{n},k)
&+\Delta t\sum_{\beta\in\Z^d}G_\beta^{\alpha,r}\,\eta(A(U^{n}_{\alpha}),A(k))\\
&+\Delta t\,
\eta'(U^{n+1}_\alpha,k)\sum_{\beta\in\Z^d}G_{\beta,r}^\alpha\,A(U^{n}_{\alpha}).
\end{split}
\end{equation}
\end{proposition}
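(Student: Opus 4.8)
The plan is to derive the cell-entropy inequalities from the schemes by the standard Crandall--Majda / Kuznetsov trick of "freezing" the $k$-level and exploiting monotonicity together with the discrete Kato inequality (Lemma~\ref{lem:app1}) for the splitted weights $G_\alpha^{\beta,r}$ and $G_{\alpha,r}^\beta$. For part (a), I would start from the implicit scheme \eqref{scheme_implicit}, written with the splitting $G_\beta^\alpha=G_\beta^{\alpha,r}+G_{\beta,r}^\alpha$, and subtract the trivial identity $k=k+\dt\sum_\beta G_\beta^\alpha A(k)$ (which holds by Lemma~\ref{lem:properties}(a), since $\sum_\beta G_\beta^\alpha=0$). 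This gives
$$U_\alpha^{n+1}-k=(U_\alpha^n-k)+\dt\sum_\beta G_\beta^{\alpha,r}\big(A(U_\beta^{n+1})-A(k)\big)+\dt\sum_\beta G_{\beta,r}^\alpha\big(A(U_\beta^{n+1})-A(k)\big).$$
Now multiply by $\eta'(U_\alpha^{n+1},k)=\sgn(U_\alpha^{n+1}-k)$. On the left this produces exactly $\eta(U_\alpha^{n+1},k)$. On the first right-hand term I use $\sgn(U_\alpha^{n+1}-k)(U_\alpha^n-k)\le|U_\alpha^n-k|=\eta(U_\alpha^n,k)$. On the $G^{\alpha,r}$-sum I apply the discrete Kato inequality (which, as the text notes, holds verbatim with $G_\beta^{\alpha,r}$ in place of $G_\beta^\alpha$) to bound $\sgn(U_\alpha^{n+1}-k)\sum_\beta G_\beta^{\alpha,r}(A(U_\beta^{n+1})-A(k))\le\sum_\beta G_\beta^{\alpha,r}\eta(A(U_\beta^{n+1}),A(k))$; here I also use that $\sum_\beta G_\beta^{\alpha,r}A(k)=0$ is \emph{not} needed because Kato already absorbs the constant, but in any case $\eta(A(\cdot),A(k))$ is the right entropy flux since $A$ is non-decreasing so $\sgn(A(u)-A(k))=\sgn(u-k)$ whenever $A(u)\ne A(k)$. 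The last ($G_{\beta,r}^\alpha$) sum is simply kept as the error term $\dt\,\eta'(U_\alpha^{n+1},k)\sum_\beta G_{\beta,r}^\alpha A(U_\beta^{n+1})$; note that because $\sum_\beta G_{\beta,r}^\alpha=0$ one may freely replace $A(U_\beta^{n+1})$ by $A(U_\beta^{n+1})-A(k)$ there, which matches \eqref{j4} up to this harmless shift. Collecting the three estimates yields \eqref{j4}.

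For part (b), the same subtraction is performed on the explicit scheme \eqref{scheme_explicit}, so all the $n+1$ superscripts on the $A$-arguments become $n$. The new subtlety is that multiplying $(U_\alpha^n-k)+\dt G_\alpha^\alpha(A(U_\alpha^n)-A(k))$ by $\sgn(U_\alpha^{n+1}-k)$ is not immediately bounded by $\eta(U_\alpha^n,k)$: one must first show that the explicit update map is monotone in the $\alpha$-th coordinate, i.e. that $u\mapsto u+\dt G_\alpha^\alpha A(u)$ is non-decreasing, which is exactly the content of the CFL condition \eqref{cfl} (or \eqref{CFLj}) as verified in the proof of Proposition~\ref{prop:CM}(c). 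Under the CFL condition one has $\sgn(U_\alpha^{n+1}-k)\big[(U_\alpha^n-k)+\dt G_\alpha^\alpha(A(U_\alpha^n)-A(k))\big]\le\big|(U_\alpha^n-k)+\dt G_\alpha^\alpha(A(U_\alpha^n)-A(k))\big|\le|U_\alpha^n-k|+\dt|G_\alpha^\alpha|\,|A(U_\alpha^n)-A(k)|$, and then I recombine: since $G_\alpha^{\alpha,r}$ collects the "near" part of $G_\alpha^\alpha$ and has sign $\le0$ at the diagonal, $|G_\alpha^{\alpha}|$ distributes correctly so that the diagonal contribution $\dt|G_\alpha^{\alpha,r}|\,|A(U_\alpha^n)-A(k)|$ is exactly $-\dt G_\alpha^{\alpha,r}\eta(A(U_\alpha^n),A(k))$, matching the $\beta=\alpha$ term of the first sum in \eqref{u1} with the opposite sign — so it must be moved to combine with the off-diagonal Kato estimate. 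The cleaner route, which I would actually write, is: split off the diagonal term of the $G^{\alpha,r}$-sum before applying Kato, handle $U_\alpha^n-k+\dt G_\alpha^{\alpha,r}(A(U_\alpha^n)-A(k))$ by monotonicity (CFL), apply Kato to the remaining off-diagonal $G_\beta^{\alpha,r}$, $\beta\ne\alpha$, sum (which are $\ge0$), and keep the $G_{\beta,r}^\alpha$-sum as the error term, exactly as in part (a). Reassembling gives \eqref{u1}; note that in \eqref{u1} the entropy flux terms carry $A(U_\alpha^n)$ rather than $A(U_\beta^n)$, reflecting that in the explicit case the "time-stepping slack" forces one to evaluate at the center cell — this is consistent with keeping the diagonal term exact and will be harmless in the later Kuznetsov estimate.

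I expect the main obstacle to be the careful bookkeeping of the diagonal ($\beta=\alpha$) term in part (b): one must track the sign of $G_\alpha^{\alpha,r}\le0$, see that the CFL condition is precisely what is needed to absorb $1+\dt G_\alpha^{\alpha,r}A'$ into a monotone map on the diagonal (since $-G_\alpha^{\alpha,r}\le-G_\alpha^\alpha\le\bar c/\hat\sigma_\mu(\dx)$, the same constant from Lemma~\ref{lem:properties}(d),(e) works), and then reconcile the resulting $|A(U_\alpha^n)-A(k)|$ with the $\eta(A(U_\alpha^n),A(k))$ appearing in the statement. The implicit case (a) has no such CFL constraint — monotonicity of $u\mapsto u-\dt|G_\alpha^\alpha|A(u)$ (appearing implicitly through the already-solved $U_\alpha^{n+1}$) is automatic — so that direction is genuinely routine once the subtraction-and-Kato template is set up. Throughout, the two identities $\sum_\beta G_\beta^{\alpha,r}=0$ and $\sum_\beta G_{\beta,r}^\alpha=0$ (from Lemma~\ref{lem:properties}(a) applied to the $r$-split weights) are what let me insert and remove the constant $A(k)$ at will, and the monotonicity of $A$ is what identifies $\sgn(u-k)(A(u)-A(k))$ with $\eta(A(u),A(k))=|A(u)-A(k)|$.
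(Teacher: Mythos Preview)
Your approach for part~(a) is correct and is essentially the paper's argument in different clothing: the paper phrases it via the $\vee k$/$\wedge k$ inequalities and then subtracts, while you multiply by $\sgn(U_\alpha^{n+1}-k)$; the two are equivalent, and both then apply the discrete Kato inequality to the $G^{\alpha,r}$-part.

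For part~(b), your ``cleaner route'' does work, but you have obscured it by first writing the triangle inequality $|D|\le|U_\alpha^n-k|+\dt|G_\alpha^\alpha|\,|A(U_\alpha^n)-A(k)|$, which gives the \emph{wrong} sign on the diagonal term. The point of invoking the CFL condition is precisely that the map $u\mapsto u+\dt G_\alpha^{\alpha,r}A(u)$ is non-decreasing, so $D:=(U_\alpha^n-k)+\dt G_\alpha^{\alpha,r}(A(U_\alpha^n)-A(k))$ has the same sign as $U_\alpha^n-k$; hence $|D|=\eta(U_\alpha^n,k)+\dt\,G_\alpha^{\alpha,r}\,\eta(A(U_\alpha^n),A(k))$ \emph{exactly}, with the correct (negative) sign on $G_\alpha^{\alpha,r}$. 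Once this is said clearly, your off-diagonal positivity and far-part bookkeeping finish the job.

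The paper takes a somewhat different and tidier path for~(b): rather than isolating the diagonal, it uses the monotonicity of the (near part of the) explicit scheme, already established in Proposition~\ref{prop:CM}(c), in the Crandall--Majda form. Writing $T^r_\alpha[V]=V_\alpha+\dt\sum_\beta G_\beta^{\alpha,r}A(V_\beta)$, monotonicity plus $T^r_\alpha[k,\dots,k]=k$ give directly
\[
U_\alpha^{n+1}\vee k \le T^r_\alpha[U^n\vee k]+\dt\,\mathbf{1}_{(k,\infty)}(U_\alpha^{n+1})\sum_\beta G_{\beta,r}^\alpha A(U_\beta^n),
\]
and the analogous $\wedge k$ inequality; subtracting and using $A(U\vee k)-A(U\wedge k)=\eta(A(U),A(k))$ yields~\eqref{u1} without any diagonal/off-diagonal split. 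Both routes rely on the same CFL-monotonicity; the paper's just packages it more cleanly.

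One small point: the occurrences of $A(U_\alpha^n)$ in the stated inequality~\eqref{u1} are a typo for $A(U_\beta^n)$ (as the paper's own proof confirms). Your paragraph trying to justify evaluation at the center cell is therefore unnecessary.
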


\begin{remark}
In the cell-entropy inequality for the explicit method, the
$\eta'$-term appears in the ``wrong'' time. In Section 
\ref{sec:framework}, we will see that this leads to worse error
estimates for the explicit method than for the implicit method.
\end{remark}

\begin{remark}[Convergence to entropy solutions]
\label{conv_rem}
Proposition \ref{th:conv} and a standard argument show that any
$C([0,T];L^1_{\mathrm{loc}}(\R^d))$-convergent sequence of
(interpolated) solutions $\bar u_{\dx}$ of \eqref{scheme_implicit} or
\eqref{scheme_explicit}, will converge to an entropy solution of
\eqref{1}. We refer to Theorem 3.9 in
\cite{Holden/Risebro} and Section 4.2 in \cite{Cifani/Jakobsen} for
more details. Convergence to the entropy solution also follows from the
error estimates of Section \ref{sec:framework}.
\end{remark}

\begin{proof}
\noindent (a) By \eqref{scheme_implicit} we easily see that for any $k\in\R$,
\begin{align*}
&U_{\alpha}^{n+1}\vee k\leq U_{\alpha}^{n}\vee k+\Delta t\,
\mathbf{1}_{(k,+\infty)}(U_{\alpha}^{n+1})\,\hat\Levy^\mu\langle
A(U^{n+1})\rangle_\alpha,\\
&U_{\alpha}^{n+1}\wedge k\geq U_{\alpha}^{n}\wedge k+\Delta t\,
\mathbf{1}_{(-\infty,k)}(U_{\alpha}^{n+1})\,\hat\Levy^\mu\langle
A(U^{n+1})\rangle_\alpha.
\end{align*}
Subtracting and using $\eta(u,k)=|u-k|$  and
$\eta'(u,k)=\sgn(u-k)$, we find that 
\begin{equation*}
\begin{split}
\eta(U_{\alpha}^{n+1},k)\leq\eta(U_{\alpha}^{n},k)+\Delta
t\,\eta'(U^{n+1}_\alpha,k)\,\hat\Levy^\mu\langle
A(U^{n+1})\rangle_\alpha.
\end{split}
\end{equation*}
For any $r>0$, we use Lemmas \ref{lem:properties} (a) and \ref{lem:app1} with
$G_\beta^{\alpha,r}$ replacing $G_\beta^{\alpha}$ to see that
\begin{equation*}
\begin{split}
&\eta'(U^{n+1}_\alpha,k)\sum_{\beta\in\Z^d}G_\beta^{\alpha,r}\,A(U^{n+1}_{\beta})\\
&=\eta'(U^{n+1}_\alpha,k)\sum_{\beta\in\Z^d}G_\beta^{\alpha,r}\,(A(U^{n+1}_{\beta})-A(k))\qquad\left(\text{since }\sum_{\beta\in\Z^d}G_\beta^{\alpha,r}=0\right)\\
&\leq\sum_{\beta\in\Z^d}
G_\beta^{\alpha,r}\,\eta(A(U^{n+1}_{\beta}),A(k)).
\end{split}
\end{equation*}
The cell entropy inequality now follows from writing
$G^\alp_\beta=G^\alp_{\beta,r}+G^{\alp,r}_\beta$ and using the  above
inequalities.
\medskip

\noindent (b) By \eqref{scheme_explicit} and monotonicity (Proposition
\ref{th:conv} (c)) we obtain the following inequalities: For all 
$r>0$,
\begin{align*}
U_{\alpha}^{n+1}\vee k\leq U_{\alpha}^{n}\vee k
&+\Delta t\sum_{\beta\in\Z^d}G_\beta^{\alpha,r}\,A(U^{n}_{\beta}\vee k)\\
&+\Delta t\,
\mathbf{1}_{(k,+\infty)}(U_{\alpha}^{n+1})\sum_{\beta\in\Z^d}G_{\beta,r}^\alpha\,A(U^{n}_{\beta}),\\
U_{\alpha}^{n+1}\wedge k\geq U_{\alpha}^{n}\wedge k
&+\Delta t\sum_{\beta\in\Z^d}G_\beta^{\alpha,r}\,A(U^{n}_{\beta}\wedge k)\\
&+\Delta t\,
\mathbf{1}_{(-\infty,k)}(U_{\alpha}^{n+1})\sum_{\beta\in\Z^d}G^\alpha_{\beta,r}\,A(U^{n}_{\beta}).
\end{align*}
Since $\eta(A(U),A(k))=A(U\vee k)-A(U\wedge k)$, the cell entropy
inequality follows from subtracting the two inequalities.
\end{proof}


\section{A priori estimates, existence, and uniqueness}\label{sec:comp}

In this section we state and prove several a priori estimates for the
solutions of the numerical methods \eqref{scheme_implicit} and
\eqref{scheme_explicit}. 
In what follows, we will use different interpolants $\bar
u$ of the solutions $U^n_\alp$ of the schemes. For the implicit method
\eqref{scheme_implicit} we take
\begin{equation}\label{interp_impl}
\begin{split}
\bar u(x,t)=U_{\alpha}^{n+1}\quad\text{for all $(x,t)\in
R_{\alpha}\times(t_n,t_{n+1}]$},
\end{split}
\end{equation}
while for the explicit method \eqref{scheme_explicit},
\begin{equation}\label{interp_expl}
\begin{split}
\bar u(x,t)=U_{\alpha}^{n}\quad\text{for all $(x,t)\in
R_{\alpha}\times[t_n,t_{n+1})$}.
\end{split}
\end{equation}

We now prove the following a priori estimates for $\bar u$:
\begin{align}
\|\bar u(\cdot,t)\|_{L^{1}(\mathbb{R}^d)}&\leq\|u_{0}\|_{L^{1}(\mathbb{R}^d)},\label{L1-cont}\\
\|\bar u(\cdot,t)\|_{L^{\infty}(\mathbb{R}^d)}&\leq\|u_{0}\|_{L^{\infty}(\mathbb{R}^d)},\label{max-princ}\\
|\bar
u(\cdot,t)|_{BV(\mathbb{R}^d)}&\leq|u_{0}|_{BV(\mathbb{R}^d)}.\label{BV-cont}
\end{align}

\begin{lemma}\label{th:existence_scheme}\emph{(A priori estimates)}

\noindent(a) If $U$ solve \eqref{scheme_implicit} and $\bar u$ is defined
by \eqref{interp_impl}, then the a priori estimates \eqref{L1-cont}
-- \eqref{BV-cont} hold for all $t>0$. 
\smallskip

\noindent(b) Assume the CFL condition \eqref{cfl} (or \eqref{CFLj} and
\eqref{fractional_meas}) holds. If $U$ solve \eqref{scheme_explicit} and $\bar u$ is defined
by \eqref{interp_expl}, then the a priori estimates \eqref{L1-cont}
-- \eqref{BV-cont} hold for all $t>0$.
\end{lemma}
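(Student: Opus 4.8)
The plan is to establish the three a priori estimates \eqref{L1-cont}--\eqref{BV-cont} by exploiting the structural properties of the schemes proved in Sections \ref{sec:properties}, namely conservativity (Proposition \ref{prop:CM}(a)), monotonicity (Proposition \ref{prop:CM}(b)--(c)), and the discrete Kato inequality (Lemma \ref{lem:app1}). The key observation underlying all three bounds is that the schemes \eqref{scheme_implicit} and \eqref{scheme_explicit} commute with the addition of constants (since $\sum_\beta G_\beta^\alpha=0$ by Lemma \ref{lem:properties}(a)) and with spatial translations by lattice vectors (since $G_\alpha^\beta=G_{\alpha+e_l}^{\beta+e_l}$ by Lemma \ref{lem:properties}(b)). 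Because the estimates pass to the interpolants \eqref{interp_impl}, \eqref{interp_expl} with no loss (these are just piecewise-constant-in-time reconstructions, and $\|U^0\|_{L^1}\le\|u_0\|_{L^1}$, $\|U^0\|_{L^\infty}\le\|u_0\|_{L^\infty}$, $|U^0|_{BV}\le|u_0|_{BV}$ by Jensen/averaging), it suffices to prove the discrete analogues
\[
\dx^d\sum_\alpha|U^{n+1}_\alpha|\le \dx^d\sum_\alpha|U^n_\alpha|,\qquad \max_\alpha|U^{n+1}_\alpha|\le\max_\alpha|U^n_\alpha|,\qquad \dx^{d-1}\sum_\alpha|U^{n+1}_{\alpha+e_l}-U^{n+1}_\alpha|\le \dx^{d-1}\sum_\alpha|U^n_{\alpha+e_l}-U^n_\alpha|
\]
for each step $n\mapsto n+1$ and each coordinate direction $l$, then iterate.

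The order of steps I would carry out is as follows. \emph{Maximum principle:} The constant sequences $V_\alpha\equiv\pm\|U^n\|_{\ell^\infty}$ are stationary solutions of both schemes (again by $\sum_\beta G_\beta^\alpha=0$ and $A$ being just evaluated, so constants are fixed points of the one-step map). Since $-\|U^n\|_{\ell^\infty}\le U^n_\alpha\le\|U^n\|_{\ell^\infty}$ for all $\alpha$, monotonicity (Proposition \ref{prop:CM}(b) for the implicit scheme, (c) under the CFL condition for the explicit one) gives $-\|U^n\|_{\ell^\infty}\le U^{n+1}_\alpha\le\|U^n\|_{\ell^\infty}$, which is \eqref{max-princ}. \emph{$L^1$ contraction / bound:} Apply the discrete Kato inequality Lemma \ref{lem:app1} to $U^n$ and the zero sequence $v\equiv0$ (using $A(0)=0$ from (\textbf{A}.2)): for the implicit scheme,
\[
|U^{n+1}_\alpha|\le |U^n_\alpha|+\dt\,\sgn(U^{n+1}_\alpha)\!\!\sum_{\beta}G_\beta^\alpha A(U^{n+1}_\beta)\le |U^n_\alpha|+\dt\sum_\beta G_\beta^\alpha|A(U^{n+1}_\beta)|,
\]
then sum over $\alpha$ and use $\sum_\alpha G_\beta^\alpha=0$ to kill the last term, giving $\sum_\alpha|U^{n+1}_\alpha|\le\sum_\alpha|U^n_\alpha|$; for the explicit scheme one argues similarly with the CFL-provided monotonicity (write $|U^{n+1}_\alpha|=U^{n+1}_\alpha\vee k+\ldots$ with $k=0$, or directly $|T_\alpha[u]-T_\alpha[v]|\le\sum_\beta$-type estimate exploiting $\partial_{u_\beta}T_\alpha\ge0$ for all $\beta$, including $\beta=\alpha$ under \eqref{cfl}). \emph{BV bound:} By spatial-translation invariance of the scheme (Lemma \ref{lem:properties}(b)), if $U^n$ solves the scheme then so does the shifted sequence $(U^n_{\alpha+e_l})_\alpha$; applying the $L^1$-contraction between $U$ and its shift — which is really the $\ell^1$ version of monotonicity, i.e. $\sum_\alpha|U^{n+1}_\alpha-W^{n+1}_\alpha|\le\sum_\alpha|U^n_\alpha-W^n_\alpha|$ for any two solutions $U,W$, itself a consequence of Lemma \ref{lem:app1} plus $\sum_\alpha G_\beta^\alpha=0$ — with $W^n_\alpha=U^n_{\alpha+e_l}$ yields $\sum_\alpha|U^{n+1}_{\alpha+e_l}-U^{n+1}_\alpha|\le\sum_\alpha|U^n_{\alpha+e_l}-U^n_\alpha|$, which sums over $l$ to \eqref{BV-cont}.

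The main obstacle, and the place where the implicit and explicit cases genuinely diverge, is the $\ell^1$-contraction step for the \emph{explicit} scheme: one must check that the one-step map $T[u]$ is an $\ell^1$-contraction, and this is exactly where the CFL condition \eqref{cfl} (or the sharper \eqref{CFLj} under \eqref{fractional_meas}) is needed to guarantee $\partial_{u_\alpha}T_\alpha[u]=1+\dt\,G_\alpha^\alpha A'(u_\alpha)\ge 1-\bar c L_A\dt/\hat\sigma_\mu(\dx)\ge0$, using the lower bound on $G_\alpha^\alpha$ from Lemma \ref{lem:properties}(d)--(e); combined with $\partial_{u_\beta}T_\alpha\ge0$ for $\beta\ne\alpha$ and $\sum_\alpha\partial_{u_\beta}T_\alpha[u]=1$ (from $\sum_\alpha G_\beta^\alpha=0$), one gets $\sum_\alpha|T_\alpha[u]-T_\alpha[v]|\le\sum_\alpha\sum_\beta\partial_{u_\beta}T_\alpha|u_\beta-v_\beta|=\sum_\beta|u_\beta-v_\beta|$ by an integration along the segment from $v$ to $u$. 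For the implicit scheme the analogous monotonicity holds unconditionally (Proposition \ref{prop:CM}(b)), but one must handle the fact that the scheme is implicit: the bounds are derived from the one-step inequalities involving $U^{n+1}$ on both sides, which is fine because the $G_\beta^\alpha$-terms on the right are annihilated upon summation (for $L^1$) or dominated by monotone-stationary comparison (for $L^\infty$) or telescoped (for $BV$), so no inversion of the implicit operator is actually required in the estimate itself — existence and well-definedness of $U^{n+1}$ being a separate matter presumably settled elsewhere in Section \ref{sec:comp}.
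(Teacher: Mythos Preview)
Your proposal is correct and largely follows the paper's approach: the $L^1$ bound via the discrete Kato inequality (Lemma \ref{lem:app1}) plus $\sum_\alpha G_\beta^\alpha=0$, and the $BV$ bound via translation invariance $G_\beta^\alpha=G_{\beta+e_l}^{\alpha+e_l}$ combined with that same Kato argument, match the paper's proof for the implicit scheme essentially line by line (the paper treats the explicit case by citing the Crandall--Tartar lemma and \cite[Theorem 3.6]{Holden/Risebro}, whereas you spell out the $\ell^1$-contraction argument directly, which is fine).

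The one place you diverge is the maximum principle. You argue by comparison with constant solutions via monotonicity (Proposition \ref{prop:CM}); the paper instead first proves the $L^1$ bound, uses it to guarantee that $\sup_\alpha U^{n+1}_\alpha$ is \emph{attained} at some $\alpha_0$, and then observes that the diffusion term $\sum_\beta G_\beta^{\alpha_0}(A(U^{n+1}_\beta)-A(U^{n+1}_{\alpha_0}))\le 0$ at the maximum, so $U^{n+1}_{\alpha_0}\le U^n_{\alpha_0}\le\sup_\alpha U^n_\alpha$. Your route is cleaner in that it does not rely on the $L^1$ bound being proved first, but it does rely on Proposition \ref{prop:CM}(b) being applicable when one of the two solutions is a constant sequence (which is not in $\ell^1$); this is harmless here since $(U^{n+1}_\alpha-c)^+$ is still summable when $c=\|U^n\|_{\ell^\infty}\ge 0$ and $U^{n+1}\in\ell^1$, but it is worth flagging. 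The paper's barrier-at-the-max argument avoids this summability check entirely.
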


\begin{proof}
Since the schemes are conservative and monotone, cf. Proposition
\ref{prop:CM}, this is a standard result that essentially follows
from the Crandall-Tartar Lemma. For explicit methods in part (b) we refer to 
e.g. Theorem 3.6 in \cite{Holden/Risebro} for the details. 

We did not find a reference for implicit methods, so we give a proof
of part (a) here. See also \cite{Droniou} for the case when $A$ is
linear. Let $u_\alpha=U_\alpha^{n+1}$, 
$h_\alpha=U_{\alpha}^{n}$, and write \eqref{scheme_implicit} as 
\begin{equation}\label{h1}
u_{\alpha}-\Delta t\sum_{\beta\in\Z^d}G_\beta^\alpha\,A(u_{\beta})=h_\alpha.
\end{equation}

We prove \eqref{L1-cont}. Multiply  \eqref{h1} by
$\mathrm{sgn}(u_\alpha)$ and use Lemma \ref{lem:app1} to get 
\begin{equation*}
\begin{split}
|u_{\alpha}|-\Delta t\sum_{\beta\in\Z^d}G_\beta^\alpha\,|A(u_{\beta})|\leq|h_\alpha|,
\end{split}
\end{equation*}
which by Fubini's theorem and the fact that
$\sum_{\alpha\in\Z^d}G_\beta^\alpha=0$ implies that
\begin{align*}
\sum_{\alpha\in\mathbb{Z}^d}|u_{\alpha}|\leq\sum_{\alpha\in\mathbb{Z}^d}|h_\alpha|.
\end{align*}
By the definition of $u_\alp,h_\alp$ and an iteration in $n$, it follows
that
$$\sum_{\alpha\in\mathbb{Z}^d}|U^n_{\alpha}|\leq\sum_{\alpha\in\mathbb{Z}^d}|U^0_\alpha|.$$
By \eqref{interp_impl}, $\|\bar
u(\cdot,t)\|_{L^1(\R^d)}=\dx^d\sum_{\alpha\in\mathbb{Z}^d}|U^n_{\alpha}|$
for $t\in (t_n,t_{n+1}]$, and \eqref{L1-cont} follows.
\medskip

To prove \eqref{BV-cont}, we subtract two equations \eqref{h1}
evaluated at different points, 
\begin{equation*}
\begin{split}
u_{\alpha}-u_{\alpha-e_l}-\Delta
t\sum_{\beta\in\Z^d}\Big(G_\beta^\alpha\,A(u_{\beta})-G_\beta^{\alpha-e_l}\,A(u_{\beta})\Big)= 
h_\alpha-h_{\alpha-e_l}
\end{split}
\end{equation*}
and use the fact that $G_\beta^\alpha=G_{\beta+e_l}^{\alpha+e_l}$ to see that
\begin{equation*}
u_{\alpha}-u_{\alpha-e_l}-\Delta t\sum_{\beta\in\Z^d}G_\beta^\alpha\,\Big(A(u_{\beta})-A(u_{\beta-e_l})\Big)=
h_\alpha-h_{\alpha-e_l}.
\end{equation*}
Then we multiply by $\mathrm{sgn}(u_{\alpha}-u_{\alpha-e_l})$, use Lemma
\ref{lem:app1}, and sum over $\alpha$, to find that
\begin{equation*}
\begin{split}
\sum_{\alpha\in\mathbb{Z}^d}\left|u_{\alpha}-u_{\alpha-e_l}\right|\leq\sum_{\alpha\in\mathbb{Z}^d}\left|h_\alpha-h_{\alpha-e_l}\right|.
\end{split}
\end{equation*}
The estimate \eqref{BV-cont} then follows by iteration and the
definitions of $u_\alp,h_\alp,\bar u$.
\medskip

It remains to prove \eqref{max-princ}. Note that since
$\sum_\alp|u_\alp|<\infty$ by \eqref{L1-cont}, there is an $\alp_0$
such that $\sup_\alp u_\alp=u_{\alp_0}$. Moreover, the parabolic term is
nonpositive at the maximum point: since
$\sum_{\beta\in\Z^d}G_\beta^\alpha=0$ and $\sum_{\beta\in\Z^d}|G^\alpha_\beta|<\infty$, 
$$\sum_{\beta\in\Z^d}G_\beta^{\alp_0}\,A(u_\beta)=\sum_{\beta\in\Z^d}G_\beta^{\alp_0}\Big(A(u_\beta)-A(u_{\alp_0})\Big)\leq
0.$$
Then by the above inequality and \eqref{h1}, 
\begin{equation*}
\sup_{\alpha\in\mathbb{Z}^d}u_\alp=u_{\alpha_0}\leq u_{\alp_0}-\Delta
t\sum_{\beta\in\Z^d}G_\beta^{\alp_0}\,A(u_{\beta})=h_{\alp_0}\leq\sup_{\alpha\in\mathbb{Z}^d}h_\alpha.  
\end{equation*}
In a similar way we find that
$\inf_{\alpha\in\mathbb{Z}^d}h_\alpha\leq\inf_{\alpha\in\mathbb{Z}^d}u_\alpha$
and \eqref{max-princ} follow from the definitions of
$u_\alp,h_\alp,\bar u$ and an iteration in $n$.
\end{proof}

\begin{lemma}[Global existence and uniqueness]\ 

\noindent (a) There exists a unique solution $U^n\in l^1$ of the
implicit scheme \eqref{scheme_implicit} for all $n\geq0$.
\smallskip

\noindent (b) Assume the CFL condition \eqref{cfl} (or \eqref{CFLj} and
\eqref{fractional_meas}) holds. Then there
exists a unique solution $U^n\in l^1$ of the
explicit scheme \eqref{scheme_explicit} for all $n\geq0$.
\end{lemma}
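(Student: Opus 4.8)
The plan: part (b) is essentially trivial, part (a)'s uniqueness is a rerun of the $L^1$-stability argument already used for \eqref{L1-cont}, and the real content is existence for the implicit scheme. Since no CFL condition is assumed in (a), no contraction is available, and I would instead use a finite-lattice truncation together with a compactness argument.

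For (b): given $U^n\in l^1$, the right-hand side of \eqref{scheme_explicit} defines $U^{n+1}$ uniquely; the series $\sum_{\beta}G_\beta^\alpha A(U^n_\beta)$ converges absolutely because $\sum_{\beta}|G_\beta^\alpha|=2|G_\alpha^\alpha|<\infty$ (Lemma \ref{lem:properties}(a),(c)) and $|A(U_\beta^n)|\le L_A|U_\beta^n|$, and $\|U^{n+1}\|_{l^1}\le(1+2\dt|G_\alpha^\alpha|L_A)\|U^n\|_{l^1}<\infty$, so $U^{n+1}\in l^1$. Since $\|U^0\|_{l^1}\le\dx^{-d}\|u_0\|_{L^1(\R^d)}<\infty$, an induction on $n$ gives existence and uniqueness of $U^n\in l^1$ for all $n$ (the sharper bound $\|U^n\|_{l^1}\le\|U^0\|_{l^1}$ then being \eqref{L1-cont}).

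For (a), fix $n$, put $h:=U^n\in l^1$, and look for $u=U^{n+1}\in l^1$ solving $u_\alpha-\dt\sum_{\beta\in\Z^d}G_\beta^\alpha A(u_\beta)=h_\alpha$ for all $\alpha$. For uniqueness, if $u,v\in l^1$ solve this with data $h,\tilde h$, subtract the two equations, multiply by $\mathrm{sgn}(u_\alpha-v_\alpha)$, use the discrete Kato inequality (Lemma \ref{lem:app1}) and $\sum_\alpha G_\beta^\alpha=0$ (Lemma \ref{lem:properties}(a)), and sum over $\alpha$ to obtain $\|u-v\|_{l^1}\le\|h-\tilde h\|_{l^1}$ — exactly as in the proof of \eqref{L1-cont} — so there is at most one $l^1$-solution. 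Equivalently, the map $u\mapsto-\sum_\beta G_\beta^\alpha A(u_\beta)$ is accretive on $l^1$.

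For existence, for $R\in\N$ let $Q_R=\{-R,\dots,R\}^d$ and consider the truncated finite-dimensional system $F_R(u)_\alpha:=u_\alpha-\dt\sum_{\beta\in Q_R}G_\beta^\alpha A(u_\beta)=h_\alpha$, $\alpha\in Q_R$. The same Kato computation, using now that $\sum_{\alpha\in Q_R}G_\beta^\alpha=-\sum_{\alpha\notin Q_R}G_\beta^\alpha\le0$ for $\beta\in Q_R$ (Lemma \ref{lem:properties}(a),(c)), shows $\|F_R(u)-F_R(v)\|_{l^1(Q_R)}\ge\|u-v\|_{l^1(Q_R)}$; hence $F_R\colon\R^{Q_R}\to\R^{Q_R}$ is continuous, injective, and proper (as $F_R(0)=0$), and therefore a homeomorphism by invariance of domain (or a degree argument). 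This yields a unique $u^R\in\R^{Q_R}$, and from $\|u^R\|_{l^1(Q_R)}\le\|F_R(u^R)-F_R(0)\|_{l^1(Q_R)}=\|h\|_{l^1(Q_R)}\le\|h\|_{l^1}$ we get a uniform bound, in particular $|u^R_\alpha|\le\|h\|_{l^1}$ for every $\alpha$. Extending $u^R$ by zero to a sequence on $\Z^d$ — which, since $A(0)=0$, turns the $\alpha$-th equation into $u^R_\alpha-\dt\sum_{\beta\in\Z^d}G_\beta^\alpha A(u^R_\beta)=h_\alpha$ for $\alpha\in Q_R$ — I extract diagonally a subsequence with $u^R_\alpha\to u_\alpha$ for every $\alpha$; by Fatou, $u\in l^1$ with $\|u\|_{l^1}\le\|h\|_{l^1}$. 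Passing to the limit in each equation is the only delicate point: splitting $\sum_\beta G_\beta^\alpha A(u^R_\beta)$ at $|\beta-\alpha|=M$, the finite part converges by pointwise convergence, while the tail is bounded by $\big(\sup_{|m|>M}|g(m)|\big)L_A\|h\|_{l^1}$, where $g$ is the summable convolution kernel with $G_\beta^\alpha=g(\alpha-\beta)$ (Lemma \ref{lem:properties}(b)), so that $\sup_{|m|>M}|g(m)|\to0$ as $M\to\infty$; letting $R\to\infty$ and then $M\to\infty$ shows that $u$ solves the original system. Thus $U^{n+1}:=u\in l^1$ solves \eqref{scheme_implicit}, and induction on $n$ (from $U^0\in l^1$) completes (a). I expect this passage to the limit — reconciling the growing lattice with uniform control of the nonlocal tail — to be the main obstacle; alternatively, existence can be deduced from the theory of $m$-accretive operators on $L^1$ (as e.g. in \cite{Droniou} for linear $A$), the accretivity having just been noted.
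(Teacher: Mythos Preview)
Your proof is correct, but the existence argument for (a) takes a different and heavier route than the paper's. You are mistaken that ``no contraction is available'' without a CFL condition: the paper introduces an auxiliary relaxation parameter $\epsilon>0$ and considers the map
\[
T_\alpha[u]=u_\alpha-\epsilon\Big(u_\alpha-\Delta t\sum_{\beta}G_\beta^\alpha A(u_\beta)-h_\alpha\Big),
\]
whose fixed points are exactly the solutions of the implicit equation. Choosing $\epsilon$ small enough that $\epsilon\big(1+L_A\bar c\,\Delta t/\hat\sigma_\mu(\Delta x)\big)<1$, the map $T$ is monotone (by Lemma~\ref{lem:properties}) and, via the same Kato/$\sum_\alpha G_\beta^\alpha=0$ computation you used for uniqueness, an $l^1$-contraction with factor $1-\epsilon$. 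Banach's fixed point theorem then gives existence and uniqueness in one stroke.

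Your truncation--compactness argument works (the coercivity $\|F_R(u)-F_R(v)\|\geq\|u-v\|$, invariance of domain, uniform $l^1$ bound, diagonal extraction, and tail control via summability of the Toeplitz kernel are all fine), and it has the virtue of being a template that generalizes to settings where no contraction trick is apparent. But here the paper's relaxation device is shorter and avoids the limit passage entirely; it is worth knowing, since the same idea recurs whenever one wants to solve $u+\Delta t\,\mathcal{A}(u)=h$ for an accretive $\mathcal{A}$ without smallness of $\Delta t$.
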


Note that $U^n\in l^1$ implies that $\bar u(\cdot,t)\in L^1(\R^d)$.

\begin{proof}
(a) Let $u_\alpha=U_\alpha^{n+1}$ and $h_\alpha=U_{\alpha}^{n}$, 
rewrite \eqref{scheme_implicit} as \eqref{h1}, define
\begin{equation*}
T_\alpha[u]=u_\alpha-\epsilon\bigg(u_{\alpha}-\Delta
  t\sum_{\beta\in\Z^d}G_\beta^\alpha\,A(u_{\beta})-h_\alpha\bigg), 
\end{equation*}
and let $\epsilon$ be such that
\begin{align*}
\epsilon\bigg(1+L_A\bar c\frac{\Delta t}{\hat\sigma_\mu(\Delta
x)}\bigg)< 1.
\end{align*}

We first show that $T_\alpha$ is monotone, i.e. $u\leq
v$ implies $T_\alpha[u]\leq T_\alpha[v]$. For $\alpha\neq\beta$,
$G^\alpha_\beta\geq0$ by Lemma \ref{lem:properties}, and hence since
$A$ non-decreasing, 
\begin{equation*}
\partial_{u_\beta}T_\alpha[u]\geq 0.
\end{equation*}
Moreover, since $A$ non-decreasing and $-\frac{\bar
  c}{\hat\sigma_\mu(\dx)}\leq G_\alpha^\alpha\leq 0$, 
\begin{equation*}
\begin{split}
\partial_{u_\alpha}T_\alpha[u]&=1-\epsilon+\epsilon\,\Delta
t\,G_\alpha^\alpha\,A'(u_{\alpha})\geq 1-\epsilon\bigg(1+L_A\bar c\frac{\Delta t}{\hat\sigma_\mu(\Delta
x)}\bigg)
\end{split}
\end{equation*}
which is positive by our choice of $\epsilon$.

Since $T$ is monotone and $A$ is nondecreasing,
\begin{align*}
&\sum_\alp\Big(T_\alpha[u]-T_\alpha[v]\Big)^+\leq\sum_\alp\Big( T_\alpha[u\vee v]-T_\alpha[v]\Big)\\
&=(1-\epsilon)\,\sum_{\alp\in\Z^d} (u_\alpha\vee v_\alp-v_\alpha)+\epsilon\,\Delta
t\sum_{\alp\in\Z^d} \sum_{\beta\in\Z^d}G_\beta^\alpha\,\Big(A(u_\beta\vee v_\beta)-A(v_\beta)\Big)\\
&= (1-\epsilon)\,\sum_{\alp\in\Z^d} (u_\alpha-v_\alpha)^++\epsilon\,\Delta
t\sum_{\beta\in\Z^d}\Big(\sum_{\alp\in\Z^d} G_\beta^\alpha\Big)\Big(A(u_\beta)-A(v_\beta)\Big)^+.
\end{align*}
A similar estimate holds for $\sum_\alp(T_\alpha[u]-T_\alpha[v])^-$,
and since $\sum_{\alp\in\Z^d} G_\beta^\alpha=0$, we have shown that
\begin{equation*}
\begin{split}
\sum_{\alpha\in\mathbb{Z}^d}|T_\alpha[u]-T_\alpha[v]|\leq(1-\epsilon)\sum_{\alpha\in\mathbb{Z}^d}|u_\alpha-v_\alpha|.
\end{split}
\end{equation*}
So $T_\alp$ is an $l^1$-contraction and  Banach's
fixed point theorem then implies that there exists a unique solution $\bar
u\in l^1$ of $T_\alpha[\bar u]=\bar u_\alpha$ and hence also of
\eqref{h1}.
\bigskip

\noindent(b) Existence follows by construction and the a priori
  estimates in Lemma \ref{th:existence_scheme}. Uniqueness essentially
  follows by monotonicity and $\sum_\alp G^\alp_\beta=0$: Assume two
  solutions $U^n$ and $V^n$, subtract the two equations and multiply by
  $\sgn(U^n-V^n)$, and use the Kato inequality (Lemma \ref{lem:app1}) along with
  $\sum_\alp G^\alp_\beta=0$ to show that 
  $\sum_\alp|U^n-V^n|\leq\sum_\alp|U^0-V^0|$. 
\end{proof}

We have the following regularity estimate in time:
\begin{lemma}\label{lem:time-reg}\emph{(Regularity in time)}

\noindent (a) Assume (A.2) -- (A.4) hold, and let $U$ be a solution of
the implicit method \eqref{scheme_implicit} and $\bar u$ defined by
\eqref{interp_impl}. Then 
$$\|\bar u(\cdot,s)-\bar
u(\cdot,t)\|_{L^{1}(\mathbb{R}^d)}\leq \sigma_{\mu}(|s-t|+\dt)$$ 
for all $s,t>0$, where 
\begin{equation*}
\sigma_{\mu}(r)=\begin{cases}
r&\qquad\text{if $\int_{|z|>0}|z|\wedge 1\ \dif\mu(z)<\infty$,}\\[0.2cm]
\sqrt{r}&\qquad\text{otherwise.}
\end{cases}
\end{equation*}
\smallskip

\noindent (b) Assume (A.2) -- (A.4) and \eqref{cfl} (or \eqref{CFLj}
and \eqref{fractional_meas}) hold, and let $U$ be a solution of
the explicit method \eqref{scheme_explicit} and $\bar u$ defined by
\eqref{interp_expl}. Then 
$$\|\bar u(\cdot,s)-\bar
u(\cdot,t)\|_{L^{1}(\mathbb{R}^d)}\leq \sigma_{\mu}(|s-t|+\dt)$$ 
for all $s,t>0$, where $\sigma_\mu$ is defined in (a).
\end{lemma}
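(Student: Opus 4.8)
The plan is to follow the classical strategy for proving time-continuity of solutions to monotone conservative schemes: first establish the one-step estimate $\sum_\alpha |U^{n+1}_\alpha - U^n_\alpha| \le C\,\sigma_\mu(\dt)/\dt \cdot \dt$, and then bootstrap it to arbitrary time steps using the contraction property. I will treat the implicit case (a) in detail; case (b) is analogous (and in fact standard, cf. \cite{Holden/Risebro}). The first key step is to bound $\dx^d\sum_\alpha|U^1_\alpha - U^0_\alpha|$ in terms of $|u_0|_{BV}$ and the truncation level of the measure. From \eqref{scheme_implicit} we have $U^1_\alpha - U^0_\alpha = \dt\,\hat\Levy^\mu\langle A(U^1)\rangle_\alpha = \dt\sum_\beta G^\alpha_\beta A(U^1_\beta)$. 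Using $\sum_\beta G^\alpha_\beta = 0$ (Lemma \ref{lem:properties}(a)) to write this as $\dt\sum_\beta G^\alpha_\beta (A(U^1_\beta)-A(U^1_\alpha))$, unwinding the definition \eqref{weights} of $G^\alpha_\beta$, and estimating via the Lipschitz bound on $A$ and the BV-bound \eqref{BV-cont} on $\bar u(\cdot,t_1)$, I expect to obtain
\begin{equation*}
\dx^d\sum_{\alpha\in\Z^d}|U^1_\alpha - U^0_\alpha| \le C\,\dt\,L_A\,|u_0|_{BV(\R^d)}\,\Big(\tfrac1\dx\int_{\frac\dx2<|z|<1}|z|\,\dif\mu(z) + \int_{|z|>1}\dif\mu(z) + \tfrac1\dx\int_{\frac\dx2<|z|<1}|z|\,\dif\mu(z)\Big),
\end{equation*}
where the three terms come from the nonlocal part of $G_{\alpha,\beta}$, the far part, and the drift correction $G^{\alpha,\beta}$ respectively.

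The crucial observation — and the main obstacle — is that this parenthetical factor must be controlled by $\sigma_\mu(\dt)/\dt$ \emph{uniformly in $\dx$} when the CFL-type balance $\dt \sim \hat\sigma_\mu(\dx)$ is in force. When $\int |z|\wedge 1\,\dif\mu(z)<\infty$, the factor is $O(1)$ in $\dx$, giving the one-step bound $C\,\dt\,|u_0|_{BV}$, consistent with $\sigma_\mu(r)=r$. When only $\int |z|^2\wedge 1\,\dif\mu(z)<\infty$, the factor is $O(1/\dx)$; but then $\hat\sigma_\mu(\dx)=\dx^2$, so under \eqref{cfl} we have $\dt \le C\dx^2$, i.e. $\dx \ge c\sqrt{\dt}$, whence $\dt/\dx \le C\sqrt{\dt}$ and the one-step bound becomes $C\sqrt{\dt}\,|u_0|_{BV}$, consistent with $\sigma_\mu(r)=\sqrt r$. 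For the implicit scheme there is no CFL restriction, so here I must instead choose $r$ in the splitting optimally (or simply use $r=\dx$ and argue directly): writing the bound as $C\,\dt\,L_A|u_0|_{BV}\,\tfrac1\dx\int_{|z|<1}|z|^2\wedge(\tfrac\dx2)^2\,\frac{\dif\mu(z)}{(\dx/2)^2} \cdot (\dx/2)$-type expressions, one extracts the factor $\tfrac1\dx\int|z|^2\wedge\dx^2\,\dif\mu(z) \le \tfrac1\dx\cdot\dx^{?}$... — the honest route is to bound $\dx^{-1}\int_{|z|<1}|z|\wedge\dx\,\dif\mu(z) \le C$ trivially? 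No: here the correct statement is that $\int_{|z|<1}|z|\wedge 1\,\dif\mu(z)$ may be infinite, so one keeps $\dx^{-1}\int_{\frac\dx2<|z|<1}|z|\,\dif\mu(z) \le \dx^{-1}\cdot(\tfrac\dx2)^{-1}\int_{|z|<1}|z|^2\dif\mu(z) = C\dx^{-2}$, and the one-step increment is then $\le C\dt\dx^{-2}|u_0|_{BV}$. One then picks, for the \emph{given} $\dt$, the effective $\dx$ via... — since for the implicit method $\dx$ and $\dt$ are independent, the clean way out is to observe that the desired estimate with $\sqrt{|s-t|+\dt}$ is vacuous unless $\dx \gtrsim \sqrt{\dt}$, and in the regime $\dx \lesssim \sqrt\dt$ one instead sums and uses the triangle inequality over $\lceil \dt/\dx^2\rceil$ substeps of the scheme with step $\approx\dx^2$. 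I will need to be careful to phrase this correctly; the cleanest formulation may be to first prove the estimate assuming $\dt\le\dx^2$ (always true after the above reduction) and then state the general case follows by the semigroup/iteration argument below.

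For the bootstrap, I would use the $L^1$-contraction (monotonicity plus conservation, Proposition \ref{prop:CM} and the fixed-point contraction estimate proved above): for any $m\ge 0$, applying the scheme is an $L^1$-nonexpansive map, so $\dx^d\sum_\alpha|U^{n+m}_\alpha - U^m_\alpha| \le \dx^d\sum_\alpha|U^n_\alpha - U^0_\alpha|$ after shifting the initial datum; iterating the one-step bound along a telescoping sum gives $\dx^d\sum_\alpha|U^n_\alpha - U^0_\alpha| \le \sum_{k=0}^{n-1}\dx^d\sum_\alpha|U^{k+1}_\alpha-U^k_\alpha| \le n\cdot C\sigma_\mu(\dt) \le C\,\sigma_\mu(\dt)\,\dt^{-1}\cdot n\dt$. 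This gives a \emph{linear}-in-time bound $C\,t\,\sigma_\mu(\dt)/\dt$ which is too weak; instead I use subadditivity of $\sigma_\mu$ (both $r$ and $\sqrt r$ are subadditive and concave) together with the standard trick: $\|\bar u(\cdot,t)-\bar u(\cdot,s)\|_{L^1} = \|S_{t-s}\bar u(\cdot,s) - \bar u(\cdot,s)\|_{L^1}$ and $\|S_\tau v - v\|_{L^1}$ is bounded using the one-step estimate with the BV-bound (which is time-independent by \eqref{BV-cont}) applied $\lceil\tau/\dt\rceil$ times — but crucially, since each substep contributes $C\dt\,|u_0|_{BV}$ (bounded-measure case) the total is $C\lceil\tau/\dt\rceil\dt\,|u_0|_{BV} \le C(\tau+\dt)|u_0|_{BV} = C\sigma_\mu(\tau+\dt)$, and in the unbounded-measure case each of the $\lceil\tau/\dx^2\rceil$ substeps of the reduced scheme contributes $C\dx^2|u_0|_{BV}$... — here I must sum $\sqrt{}$-type increments, which requires the Cauchy–Schwarz-type bound $\sum_{k=1}^{M}\sqrt{a} = M\sqrt a$ versus $\sqrt{Ma\cdot M}$... the correct and standard resolution is Kruzhkov's interpolation: $\|\bar u(t)-\bar u(s)\| \le \|\bar u(t)-\omega_h * \bar u(s)\| + \ldots$, but for schemes the simplest is to directly estimate $\|U^{n}-U^{m}\|$ via the one-step bound summed $|n-m|$ times when $|n-m|\dt$ is small and via the a priori $L^1$-bound otherwise, then optimize — yielding exactly $\sigma_\mu(|s-t|+\dt)$. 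I anticipate this optimization/interpolation step, reconciling the linear-in-number-of-steps accumulation with the claimed square-root modulus, to be the second delicate point; the first (uniform control of the one-step constant under the CFL balance) is the genuinely new ingredient and where the degenerate-measure assumption \eqref{fractional_meas} enters in part (b) to sharpen $\sigma_\mu$ to the exponents $1,|r\ln r|, r^{1/\lambda}$ recorded in Lemma \ref{lem:time-reg_frac}.
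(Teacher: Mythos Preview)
Your approach works for the easy case $\int |z|\wedge 1\,\dif\mu(z)<\infty$: the one-step bound is indeed $C\,\dt$, and telescoping plus $L^1$-contraction gives $C(|s-t|+\dt)$ directly. But in the general case your argument has a genuine gap, and it is precisely the step you yourself flag as ``the second delicate point.''

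First a minor correction: the $\tfrac1\dx$ factors in your displayed one-step estimate should not be there. Using $\|A(\bar U(\cdot+z))-A(\bar U)\|_{L^1}\leq L_A|z|\,|u_0|_{BV}$ and $|\gamma^{\mu,\dx/2}|\leq\int_{\dx/2<|z|<1}|z|\,\dif\mu(z)$, the correct one-step bound is
\[
\dx^d\sum_\alpha |U^{n+1}_\alpha-U^n_\alpha|\ \leq\ C\,\dt\Big(\int_{\frac\dx2<|z|<1}|z|\,\dif\mu(z)+\int_{|z|>1}\dif\mu(z)\Big)\ =\ O\Big(\frac{\dt}{\dx}\Big).
\]
Telescoping $|n-m|$ such steps gives $C\,|n-m|\dt/\dx$, and there is no way to make this $\leq C\sqrt{|n-m|\dt}$ uniformly in $\dx$: for the implicit scheme $\dx$ and $\dt$ are independent, so sending $\dx\to 0$ with $|n-m|\dt$ fixed blows the bound up. Your proposed patches (assume $\dt\leq\dx^2$, use the $L^1$ a~priori bound when $|n-m|\dt$ is large, sum $\sqrt{\dt}$-sized increments) all fail in the intermediate regime $\dx^2\ll |n-m|\dt\ll 1$, where the telescoped bound exceeds $\sqrt{|n-m|\dt}$ and the $L^1$-bound $2\|u_0\|_{L^1}$ is also too crude. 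The root problem is that your estimate has \emph{no free parameter} to optimize against the $BV$ cost.

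The paper supplies exactly this missing parameter via spatial mollification --- the very ``Kru\v{z}kov interpolation'' you mention and then set aside. One mollifies $\bar U^n$ by $\omega_\eps$ with $\eps$ independent of $\dx$, writes the scheme in weak form against $\omega_\eps(y-\cdot)$, and integrates by parts to move the nonlocal operator onto $\omega_\eps$. This yields $\|\bar U^n_\eps-\bar U^{n-1}_\eps\|_{L^1}\leq C\,\dt/\eps$ (the $\dx$ having disappeared because $|D\omega_\eps|_{L^1}=O(\eps^{-1})$ replaces the role of $\dx^{-1}$). Then
\[
\|\bar U^n-\bar U^m\|_{L^1}\ \leq\ \|\bar U^n-\bar U^n_\eps\|_{L^1}+\|\bar U^n_\eps-\bar U^m_\eps\|_{L^1}+\|\bar U^m_\eps-\bar U^m\|_{L^1}\ \leq\ C\eps|u_0|_{BV}+\frac{C|n-m|\dt}{\eps},
\]
and choosing $\eps=\sqrt{|n-m|\dt}$ gives the claimed $\sqrt{\,\cdot\,}$ modulus. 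This is the key idea your proposal is missing.
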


\begin{proof}
The two proofs are essentially identical, so we only do the proof for case (a).
\medskip

\noindent 1) By \eqref{scheme_implicit}, we find that for any $x\in R_\alp$,
\begin{align*}
U^n_\alp - U^{n-1}_\alp= \frac{\dt}{\dx^d} \int_{R_\alp}\hat\Levy
[A(\bar U^n)](x)\,dx.
\end{align*}
Take a test function $0\leq\phi\in C_c^\infty$ and define
$\phi_\alp=\frac1{\dx^d} \int_{R_\alp}\phi(y)\dif y$ and $\bar
\phi(x)=\sum_\alp \phi_\alp\mathbf{1}_{R_\alp}(x)$. Multiply the
equation by $\dx^d\phi_\alp$ and sum over $\alp$ to find 
that
\begin{align*}
&\int_{\R^d}\bar\phi(x)(\bar U^{n}(x)-\bar U^{n-1}(x))\ \dif x=\dt\int_{\R^d}\bar\phi(x)\hat\Levy
[A(\bar U^n)](x)\, \dif x,
\end{align*}
where 
Let $\hat\Levy^\ast$ be the adjoint of $\hat\Levy$, then since $\bar
U$ is constant over $R_\alp$,
\begin{align*}
&\int_{\R^d}\phi(x)(\bar U^{n}(x)-\bar U^{n-1}(x))\ \dif
x=\int_{\R^d}\bar \phi(x)(\bar U^{n}(x)-\bar U^{n-1}(x))\ \dif x\\
&=\dt\int_{\R^d}(\bar\phi(x)-\phi(x))\hat\Levy
[A(\bar U^n)](x)\, \dif x+\dt\int_{\R^d}\hat\Levy^\ast
[\phi](x)A(\bar U^n)(x)\, \dif x.
\end{align*}
\smallskip

\noindent 2) Let  $\omega_\eps$ be an approximate unit, i.e.
$\omega_\eps(x)=\frac1{\eps^d}\omega(\frac x \eps)$ where $0\leq
\omega\in C_0^\infty$ and $\int_{\R^d}\omega \,dx=1$. Take
$\phi(x)=\omega_\eps(y-x)$ in the equation above and let $U^n_\eps=\bar U^n\ast \omega_\eps$:
\begin{align*}
&\bar U^n_\eps - \bar U^{n-1}_\eps=\dt(\bar\omega_\eps-\omega_\eps)\ast\hat\Levy
[A(\bar U^n)]+\dt\hat\Levy^\ast
[\omega_\eps]\ast A(\bar U^n).
\end{align*}
By Fubini we then find that
\begin{align*}
&\frac1\dt\|\bar U^n_\eps - \bar U^{n-1}_\eps\|_{L^1}\leq\|\bar\omega_\eps-\omega_\eps\|_{L^1}\|\hat\Levy
[A(\bar U^n)]\|_{L^1}+\|\hat\Levy^\ast
[\omega_\eps]\ast A(\bar U^n)\|_{L^1}=I_1+I_2.
\end{align*}
\smallskip

\noindent3) To estimate $I_1$, note that by a standard argument
$$\|\bar\omega_\eps-\omega_\eps\|_{L^1}\leq
|\omega_\eps|_{BV}\dx=\frac{c_\omega}{\eps}\dx,$$
and then by the definition of $\hat\Levy$ in \eqref{newB}, Fubini, the $L^1\cap BV$
regularity of $U^n$ (Lemma \ref{th:existence_scheme}), and the
regularity of $A$ in (A.2),
\begin{align*}
&\|\hat\Levy[A(\bar U^n)]\|_{L^1}\\
&=\int_{|z|>\frac\dx2}\int_{\R^d}A(\bar
U^n(x+z))-A(\bar U^n(x))-z\cdot \hat D_{\dx} A(\bar
U^n(x))\mathbf{1}_{|z|<1 } \ \dif x\, \dif\mu(z)\\
&\leq
\int_{|z|>\frac\dx2}\Big(2|A(U^n)|_{BV}|z|\mathbf{1}_{|z|<1}+2\|A(U^n)\|_{L^1}\mathbf{1}_{|z|>1}\Big)\,\dif\mu(z)
\\
&\leq C \int_{|z|>\frac\dx2}|z|\wedge 1\,\dif\mu(z)\leq \frac C{\dx}\int_{|z|>0}|z|^2\wedge 1\,\dif\mu(z).
\end{align*} 
These estimates along with (A.3) shows that $I_1\leq C\eps^{-1}$.
\medskip

\noindent 4) Then we estimate $I_2$. Note first that since $\hat
D_{\dx}=D+(\hat D_{\dx}-D)$, we can use Taylor's formula to see that
\begin{align*}
&\phi(x+z)-\phi(x)-z\hat D_{\dx}
\phi(x)\\
&=\int_0^1(1-s)z^TD^2\phi(x+sz)z\,\dif s \pm
\dx
\sum_{i=1}^dz_i\int_0^1(1-s)\phi_{x_ix_i}(x\pm s\dx)\,\dif s. 
\end{align*}
This identity along with the definition of $\hat\Levy^\ast$, repeated
use of Fubini, and one integration by parts in $x$, then leads to
\begin{align*}
&\hat\Levy^\ast[\omega_\eps]\ast A(\bar U^n)(x)\\
&=-\int_0^1\int_{\frac\dx2<|z|<1}\int_{\R^d}(1-s)D\omega_\eps(x-y+sz)z\otimes
z\, DA(\bar U^n(y))\,\dif y\,\dif\mu(z)\,\dif s\\
&\quad \mp\dx\sum_{i=1}^d\int_0^1\int_{\frac\dx2<|z|<1}\int_{\R^d}(1-s)
\del_{x_i}\omega_\eps(x-y\pm s\dx)z_i\,
\del_{x_i}A(\bar U^n(y))\,\dif y\dif\mu(z)\dif s\\
&\quad +\int_{\R^d}\int_{|z|>1}\Big(\omega_\eps(x-y+z)-\omega_\eps(x-y)\Big)A(\bar U^n(x)) \,\dif\mu(z)\,\dif y.
\end{align*}
Here  $DA(\bar U^n(y))\,\dif y$ should be interpreted as a measure, and
 $\int|DA(\bar U^n(y))|\,\dif y=\int \dif|A(U^n)|(y)
 =|A(U^n)|_{BV}$. By Young's inequality for convolutions (Fubini in our
 case), we then find that
\begin{align*}
I_2&\leq \frac32|\omega_\eps|_{BV}|A(\bar
U^n)|_{BV}\int_{0<|z|<1}|z|^2\,\dif\mu(z)
+2\|\omega_\eps\|_{L^1}\|A(\bar
U^n)\|_{L^1}\int_{|z|>1}\dif\mu(z)\\
&\leq C\eps^{-1}.
\end{align*} 
Here again we have used the properties and regularity of $\mu$, $A$,
$\bar U^n$, and $\omega_\eps$.
\medskip

\noindent 5) By steps 2) -- 4) we can conclude that
$$\|\bar U^n_\eps - \bar U^{m}_\eps\|_{L^1}\leq \sum_{j=m+1}^n\|\bar U^j_\eps - \bar U^{j-1}_\eps\|_{L^1}\leq \frac C\eps |n-m|\dt,$$
where the constant $C$ does not depend on $n$ or $m$. By the triangle
inequality and standard $BV$-estimates, it then follows that
\begin{align*}
\|\bar U^n - \bar U^{m}\|_{L^1}&\leq \|\bar U^n - \bar
U^{n}_\eps\|_{L^1}+\|\bar U^n_\eps - \bar U^{m}_\eps\|_{L^1}+\|\bar
U^m_\eps - \bar U^{m}\|_{L^1}\\
&\leq |\bar U^n|_{BV}\eps+\frac C\eps |n-m|\dt +|\bar
U^m|_{BV}\eps,
\end{align*}
and hence by taking $\eps=C\sqrt{|n-m|\dt}$,
\begin{align*}
\|\bar U^n - \bar U^{m}\|_{L^1}
&\leq C\sqrt{|n-m|\dt}.
\end{align*}
For the time-interpolated function $\bar u$ defined in
\eqref{interp_impl}, we then find the following estimate
\begin{align*}
\|\bar u(\cdot,t) - \bar u(\cdot,s)\|_{L^1}&= \|\bar U^n - \bar
U^{m}\|_{L^1}
\leq C\sqrt{|n-m|\dt}\leq C\sqrt{|t-s|+\dt}.
\end{align*}
The equality follows since for each $t,s$ there are $n,m$ such that
$\bar u(x,t)=\bar U^n(x)$ and $\bar u(x,s)=\bar U^m(x)$. Moreover, by
the definition of $\bar u$, $ |n-m|\dt\leq|t-s| +\dt$.
\medskip

It remains to prove a better estimate for the case when $\int
|z|\wedge1\,\dif\mu(z)<\infty$. This proof is similar but much easier than
the proof above, so we skip it.
\end{proof}

The time regularity result in Lemma \ref{lem:time-reg} is not optimal
for Levy operators $\Levy$ with order in the interval $[1,2)$. To get
optimal results we need more detailed information on the Levy measure
$\mu$ than merely assumption (A.3). We will now prove an improved
time regularity result for fractional measures \eqref{fractional_meas}. 
In this result we will need the following CFL condition,
\begin{equation}\label{CFLjj}
\begin{split}
 C\frac{\Delta t}{\Delta
  x^{1\vee\lambda}}<1\qquad\text{for}\qquad \lambda\in(0,2).
\end{split}
\end{equation}

\begin{lemma}\label{lem:time-reg_frac}\emph{(Time regularity for
    fractional measures)} Assume (A.2) -- (A.4), and
  \eqref{fractional_meas} hold. 
\smallskip

\noindent (a) If the CFL condition \eqref{CFLjj} hold, $U$ is a
solution of the implicit method \eqref{scheme_implicit} and $\bar u$
its interpolation defined by \eqref{interp_impl}, then for all $s,t>0$,
$$\|\bar u(\cdot,s)-\bar
u(\cdot,t)\|_{L^{1}(\mathbb{R}^d)}\leq
\sigma_{\lambda}(|s-t|+\dt);\qquad\sigma_\lambda(\tau)=\left\{ 
\begin{array}{ll}
\tau&\lambda< 1,\\
\tau|\ln\tau|&\lambda=1,\\
\tau^{\frac1{\lambda}}&\lambda> 1.
\end{array}
\right.$$

\noindent (b) If the CFL condition \eqref{CFLj} hold, $U$ is a
solution of the explicit method \eqref{scheme_explicit} and $\bar u$
its interpolation defined by \eqref{interp_expl}, then for all $s,t>0$,
$$\|\bar u(\cdot,s)-\bar
u(\cdot,t)\|_{L^{1}(\mathbb{R}^d)}\leq
\sigma_{\lambda}(|s-t|+\dt);\quad \sigma_\lambda(\tau)=\left\{
\begin{array}{ll}
\tau&\lambda< 1,\\
\tau^\alp\ \text{for any $\alp\in(0,1)$}&\lambda=1,\\
\tau^{\frac1{\lambda}}&\lambda> 1.
\end{array}
\right.$$ 
\end{lemma}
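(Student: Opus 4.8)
The plan is to rerun the mollification argument of Lemma \ref{lem:time-reg}, but to replace its crude bounds on the two error terms by sharp $\lambda$-dependent ones obtained from \eqref{fractional_meas}, and to split the estimate into a small-time regime (handled directly by telescoping the scheme) and a large-time regime (handled by mollification). Write $\tau=|n-m|\dt$; since $\bar u$ is piecewise constant in $t$, it suffices — by monotonicity of $\sigma_\lambda$ and $|n-m|\dt\le|s-t|+\dt$ — to show $\|\bar U^n-\bar U^m\|_{L^1(\R^d)}\le C\sigma_\lambda(\tau)$.

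As a first step I record, as in step~3 of the proof of Lemma \ref{lem:time-reg} (using (A.2) and the $L^1\cap BV$ a priori bounds of Lemma \ref{th:existence_scheme}), the estimate $\|\hat\Levy^\mu[A(\bar U^j)]\|_{L^1}\le C\,\beta_\lambda(\dx)$ with $\beta_\lambda(\dx):=\int_{|z|>\frac{\dx}{2}}|z|\wedge1\,\dif\mu(z)$; a computation in polar coordinates (as in Lemma \ref{lem:properties}(e)) yields $\beta_\lambda(\dx)\le C$, $C|\ln\dx|$, $C\dx^{1-\lambda}$ for $\lambda<1$, $\lambda=1$, $\lambda>1$. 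Telescoping $\bar U^n-\bar U^m$ through the scheme gives the direct estimate $\|\bar U^n-\bar U^m\|_{L^1}\le C\tau\,\beta_\lambda(\dx)$, and invoking the CFL condition \eqref{CFLjj} one checks that $\tau\,\beta_\lambda(\dx)\le C\sigma_\lambda(\tau)$ whenever $\tau\le\dx^{1\vee\lambda}$. For $\lambda<1$ this holds for all $\tau$, so the case $\lambda<1$ is complete (it is in fact already contained in Lemma \ref{lem:time-reg}); in general it settles the small-time regime.

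For the large-time regime $\tau>\dx^{1\vee\lambda}$ I mollify: with $\omega_\eps$ an approximate unit, the argument of Lemma \ref{lem:time-reg} (weak form of the scheme, the adjoint $(\hat\Levy^\mu)^\ast$, and $\bar U^j\ast\omega_\eps\to\bar U^j$) gives
$$\|\bar U^n-\bar U^m\|_{L^1}\ \le\ 2|u_0|_{BV}\,\eps\ +\ \tau\,(I_1+I_2),\qquad I_1\ \le\ C\,\frac{\dx}{\eps}\,\beta_\lambda(\dx),$$
where $I_1$ collects the commutator terms — the interpolation commutator $\|\bar\omega_\eps-\omega_\eps\|_{L^1}\|\hat\Levy^\mu[A(\bar U^n)]\|_{L^1}$ and the numerical-gradient correction $\hat D_{\dx}-\nabla\sim\dx D^2$, which after moving one derivative onto $A(\bar U^n)\in BV$ has the same size — and $I_2=\|(\hat\Levy^\mu)^\ast[\omega_\eps]\ast A(\bar U^n)\|_{L^1}$. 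The heart of the proof is the sharp bound $I_2\le C\,g_\lambda(\eps)$ with $g_\lambda(\eps)=C|\ln\eps|$ for $\lambda=1$ and $g_\lambda(\eps)=C\eps^{\,1-\lambda}$ for $\lambda>1$. To get it I split the $z$-integral defining $(\hat\Levy^\mu)^\ast[\omega_\eps]$ at the scale $|z|\sim\eps$ (the cut-off at $|z|\le\tfrac{\dx}{2}$ only removes nonnegative mass, hence is harmless, and $\mu^\ast$ satisfies the same fractional bound as $\mu$): on $|z|<\eps$ I Taylor-expand to second order and keep exactly one derivative on $\omega_\eps$, which contributes $\eps^{-1}\int_{|z|<\eps}|z|^2\,\dif\mu$; on $\eps<|z|<1$ I rewrite the second difference as a single difference quotient of $\omega_\eps\ast A(\bar U^n)$, resp.\ of $\omega_\eps\ast\nabla A(\bar U^n)$, thereby moving \emph{both} excess derivatives onto the merely-$BV$ function $A(\bar U^n)$, which contributes $\int_{\eps<|z|<1}|z|\,\dif\mu$; the tail $|z|>1$ contributes $O(1)$. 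Evaluating these in polar coordinates (the first integral being finite since $\lambda<2$) produces $g_\lambda(\eps)$. Finally I choose $\eps\sim\tau$ when $\lambda=1$ and $\eps\sim\tau^{1/\lambda}$ when $\lambda>1$: then $2|u_0|_{BV}\eps+\tau\,g_\lambda(\eps)\le C\sigma_\lambda(\tau)$ by direct substitution, while $\tau I_1\le C\tau\tfrac{\dx}{\eps}\beta_\lambda(\dx)\le C\sigma_\lambda(\tau)$ because $\tau>\dx^{1\vee\lambda}$ together with \eqref{CFLjj}. This proves (a).

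Part (b) follows from the identical argument applied to the explicit scheme \eqref{scheme_explicit} — the only change being that $A(\bar U^n)$ inside the discrete operator is replaced by $A(\bar U^{n-1})$, which affects none of the $L^1$/$BV$ estimates — now under the CFL condition \eqref{CFLj}; at $\lambda=1$ we only record the weaker rate $\tau^\alpha$, $\alpha\in(0,1)$, since this is all that is needed later. The step I expect to be the real obstacle is the bound on $I_2$: obtaining the exponent $1-\lambda$ (rather than the naive $-\lambda$, which would only give $\tau^{1/(\lambda+1)}$ after optimization) forces the $|z|$-dependent bookkeeping of how many derivatives ride on the mollifier versus on the $BV$-function $A(\bar U^n)$, and this must be carried out while simultaneously controlling the numerical-gradient correction and checking that every leftover term is dominated by $\sigma_\lambda(\tau+\dt)$ via the CFL condition.
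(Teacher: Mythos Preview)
Your proposal is correct and follows essentially the same approach as the paper: rerun the mollification argument of Lemma \ref{lem:time-reg}, but split the $z$-integral in $I_2$ at the scale $|z|\sim\eps$, use a second-order Taylor expansion on $|z|<\eps$ and a first-order one on $\eps<|z|<1$ (moving one derivative onto the $BV$ function $A(\bar U^n)$), and then optimize in $\eps$. The resulting sharp bound $I_2\le C\eps^{1-\lambda}$ (resp.\ $C|\ln\eps|$) is exactly what the paper obtains.

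The one organizational difference worth noting: you make the constraint $\dx\le\eps$ rigorous by explicitly splitting into a small-time regime $\tau\le\dx^{1\vee\lambda}$ handled by direct telescoping, and a large-time regime where the chosen $\eps$ automatically dominates $\dx$. The paper instead asserts that ``in view of the CFL conditions, the constant $c$ can be chosen such that $\dx\le\eps$'' and leaves the verification implicit. Your split is cleaner and makes the role of the CFL condition more transparent. A second minor difference is bookkeeping: you fold the numerical-gradient correction $\hat D_{\dx}-\nabla$ into $I_1$, whereas the paper carries it inside the Taylor expansion for $I_2$; the contribution has the same size either way, so this is immaterial.
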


Note well that in this result we {\em need} the CLF condition also for
the implicit scheme. The reason is that the time-regularity is linked
through the equation to the approximate $\dx$-depending diffusion term
as will be seen from the proof. For the implicit scheme, we can have
better results for $\lambda=1$ since we can use the less restrictive
CFL condition \eqref{CFLjj}.

\begin{proof}
The result for $\lambda<1$ is a corollary to Lemma \ref{lem:time-reg}.
The proof for $\lambda\geq1$ is the same as the proof of Lemma
\ref{lem:time-reg}, except that we use different estimates for $I_1$
and $I_2$ in step 2).  From step 
3) in that proof and \eqref{fractional_meas} and a simple
computation in polar coordinates, we get that 
\begin{align*}
I_1&\leq C\frac{\dx}{\eps}\int_{|z|>\frac{\dx}2}|z|\wedge
1\,\dif\mu(z)\leq
C\frac{\dx}{\eps}\bigg(\int_{\frac{\dx}2<|z|<1}\frac{|z|\,
dz}{|z|^{d+\lambda}}+C\bigg)\\
&\leq \frac C\eps \begin{cases}\dx+\dx^{2-\lambda}&\text{for }\lambda>1,\\
 \dx-\dx\ln \dx&\text{for }\lambda=1.
 \end{cases}
\end{align*}
To estimate $I_2$, we use Taylor expansions and integration by parts
to find that 
\begin{align*}
&\hat\Levy^\ast[\omega_\eps]\ast A(\bar U^n)(x)=\\
&-\int_0^1\int_{\frac\dx2<|z|<\eps}\int_{\R^d}(1-s)D\omega_\eps(x-y+sz)z\otimes
z\, DA(\bar U^n(y))\,\dif y\,\dif\mu(z)\,\dif s\\
&-\int_0^1\int_{\eps<|z|<1}\int_{\R^d}\Big(\omega_\eps(x-y+sz)-\omega_\eps(x-y)\Big)z\, DA(\bar U^n(y))\,\dif y\,\dif\mu(z)\,\dif s\\
& \mp\dx\sum_{i=1}^d\int_0^1\int_{\frac\dx2<|z|<\eps}\int_{\R^d}(1-s)
\del_{x_i}\omega_\eps(x-y\pm s\dx)z_i\,
\del_{x_i}A(\bar U^n(y))\,\dif y\dif\mu(z)\dif s\\
&\mp\sum_{i=1}^d\int_0^1\int_{\eps<|z|<1}\int_{\R^d}\Big(\omega_\eps(x-y\pm s\dx)-\omega_\eps(x-y)\Big)
z_i\,\del_{x_i}A(\bar U^n(y))\,\dif y\dif\mu(z)\dif s\\
&+\int_{\R^d}\int_{|z|>1}\Big(\omega_\eps(x-y+z)-\omega_\eps(x-y)\Big)A(\bar U^n(x)) \,\dif\mu(z)\,\dif y.
\end{align*}
Then by Fubini, the definition of $\omega_\eps$, and the change of variables
$(x,z)\ra(\eps x,\eps z)$, 
$$\int_{\R^d}\int_{\frac\dx2<|z|<\eps} |D\omega_\eps(x+sz)| |z|^2\,\dif\mu(z)
\leq c_\lambda\eps^{1-\lambda}\int_{\R^d} |D\omega|\, \dif x \int_{0<|z|<1}
\frac{|z|^2\,\dif z}{|z|^{d+\lambda}}.$$ 
By similar estimates and Young's inequality for convolutions we find that
\begin{align*}
I_2
\leq\  &c_\lambda\eps^{1-\lambda}|A(\bar
U^n)|_{BV}\bigg(3|\omega|_{BV}\int_{0<|z|<1}\frac{|z|^2\,\dif z}{|z|^{d+\lambda}}
+4\|\omega\|_{L^1}\int_{1<|z|<\frac1\eps}\frac{|z|\,\dif
  z}{|z|^{d+\lambda}}\bigg)\\
&+2\|A(\bar
U^n)\|_{L^1}\|\omega_\eps\|_{L^1}\int_{|z|>1}\dif \mu(z)\\
\leq\  &C\begin{cases}\eps^{1-\lambda}+1,& \lambda>1,\\
|\ln\eps|+1,& \lambda=1.
\end{cases}
\end{align*} 
Note that the $\ln\eps$-term comes from the integral over $1<|z|<\frac
1\eps$. 

As in step 5) in the proof of Lemma \ref{lem:time-reg}, we then find that
\begin{align*}
\|\bar U^n - \bar U^{m}\|_{L^1}&\leq |\bar U^n|_{BV}\eps+|n-m|\dt(I_1+I_2) +|\bar
U^m|_{BV}\eps.
\end{align*}
To conclude, we assume that $\dx\leq \eps$ which means in
particular that 
$$I_1+I_2\leq C\begin{cases}\eps^{1-\lambda}+1, &
  \lambda>1,\\|\ln\eps|+1,& \lambda=1.
\end{cases}$$
When $\lambda>1$, the final result follows from taking
$\eps=c(|n-m|\dt)^{\frac1\lambda}$
and arguing as in the end of the proof of Lemma
\ref{lem:time-reg}. Note that in view of the CFL conditions
\eqref{CFLj} and \eqref{CFLjj}, the constant $c$ can be chosen such
that $\dx\leq\eps$. 
For $\lambda=1$, we can use $\eps=c|n-m|\dt$ for the implicit method in
view of \eqref{CFLjj}, and by \eqref{CFLj}, $\eps=c(|n-m|\dt)^{\alp'}$
for any $\alp'\in(0,1)$, will do the job for the explicit method.
\end{proof}

By the a priori estimates Lemma \ref{th:existence_scheme}
and \ref{lem:time-reg} and Kolmogorov's compactness theorem
(cf.~e.g.~\cite[Theorem 3.8]{Holden/Risebro}), we find subsequences of
both methods \eqref{scheme_implicit} and \eqref{scheme_explicit}
converging to some function $u$. The function $u$ inherits all the a
priori estimates of $\bar u$, and it will be the unique entropy
solution of \eqref{1} by Remark \ref{conv_rem}. In short, we 
have the following result:
\begin{theorem}\emph{(Compactness)}\label{th:compactness}
Assume (A.2) -- (A.4) hold. If either
\begin{itemize}
\item[(i)]  $U$ is the solution of
the implicit method \eqref{scheme_implicit} and $\bar u$ defined by
\eqref{interp_impl}, or
\item[(ii)] $U$ is the solution of
the explicit method \eqref{scheme_explicit}, $\bar u$ defined by
\eqref{interp_expl}, and \eqref{cfl} (or \eqref{CFLj}
and \eqref{fractional_meas}) also holds, 
\end{itemize}
then there is a subsequence of $\{\bar u\}_{\Delta
x>0}$ converging in $C([0,T];L^1(\R^d))$ to the unique entropy
solution $u$ of \eqref{1} as $\Delta x\rightarrow 0$. Moreover, 
$$u\in L^\infty(Q_T)\cap C([0,T];L^{1}(\mathbb{R}^d))\cap
L^\infty(0,T;BV(\mathbb{R}^d)).$$
\end{theorem}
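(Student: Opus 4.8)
The plan is to assemble this convergence result from the pieces already established in the paper, since Theorem \ref{th:compactness} is essentially a bookkeeping statement. First I would collect the uniform bounds: by Lemma \ref{th:existence_scheme}, the interpolants $\bar u$ satisfy the $L^1$, $L^\infty$, and $BV$ bounds \eqref{L1-cont}--\eqref{BV-cont} uniformly in $\dx$ (and $\dt$), and by Lemma \ref{lem:time-reg} they are equicontinuous in time with modulus $\sigma_\mu(|s-t|+\dt)\to0$ as $|s-t|,\dt\to0$. These are exactly the hypotheses needed for a Kolmogorov--Riesz type compactness argument in $C([0,T];L^1_{\mathrm{loc}}(\R^d))$: the spatial $BV$ bound controls spatial translations, the time modulus controls temporal translations, and the uniform $L^1$ bound plus the time regularity gives the tightness needed to upgrade $L^1_{\mathrm{loc}}$ to $L^1(\R^d)$ (no mass escapes to spatial infinity). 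I would cite \cite[Theorem 3.8]{Holden/Risebro} for the precise statement and simply verify its hypotheses.

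Next I would extract a subsequence $\bar u_{\dx_j}\to u$ in $C([0,T];L^1(\R^d))$ and observe that $u$ inherits all the a priori bounds: $L^1$, $L^\infty$, and (by lower semicontinuity of the $BV$ seminorm under $L^1$ convergence) the $BV$ bound, together with the time modulus of continuity, so indeed $u\in L^\infty(Q_T)\cap C([0,T];L^1(\R^d))\cap L^\infty(0,T;BV(\R^d))$. Then I would invoke Remark \ref{conv_rem} / Proposition \ref{th:conv}: the cell-entropy inequalities \eqref{j4} and \eqref{u1} pass to the limit (after summing against a test function and using the $L^1$ convergence of $\bar u_{\dx_j}$, the consistency of the discrete nonlocal operator with $\Levy^\mu$, and the convergence of the discrete splitting $G^{\alpha,r}_\beta, G^\alpha_{\beta,r}$ to the corresponding pieces of $\Levy^\mu_r$ and $\Levy^{\mu,r}$), so that the limit $u$ satisfies the entropy inequality \eqref{entropy_ineq} in Definition \ref{def:entropy}. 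By the uniqueness part of Theorem \ref{thm:WP}, $u$ is \emph{the} entropy solution of \eqref{1}, which moreover shows the whole family $\{\bar u\}_{\dx>0}$ (not just a subsequence) converges, since every subsequence has a further subsequence converging to the same limit.

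The two cases (i) and (ii) are handled identically; the only difference is that for the explicit method one must assume the CFL condition \eqref{cfl} (or \eqref{CFLj} under \eqref{fractional_meas}) so that Lemma \ref{th:existence_scheme}(b) and Lemma \ref{lem:time-reg}(b) apply — without it the scheme need not even be monotone or satisfy the a priori bounds. I would note this explicitly and then remark that the argument is otherwise verbatim.

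The main obstacle — such as it is — is the passage to the limit in the cell-entropy inequalities: one has to show that the discretization error between the scheme applied to $\bar u_{\dx_j}$ and the continuous entropy inequality tested against a fixed $\varphi$ tends to zero. This requires a consistency estimate for $\hat\Levy^\mu$ against smooth test functions (controlling the truncation of $\mu$ near the origin by $\int_{|z|<\dx/2}|z|^2\dif\mu<\infty$ and the upwind gradient error by $O(\dx)$), plus care that the ``wrong-time'' $\eta'$-term in the explicit cell-entropy inequality \eqref{u1} still converges — which it does, because $\|\bar u(\cdot,t)-\bar u(\cdot,t-\dt)\|_{L^1}\to0$ by Lemma \ref{lem:time-reg}. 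Since this is precisely the standard argument referenced in Remark \ref{conv_rem} (cf. \cite[Theorem 3.9]{Holden/Risebro} and \cite[Section 4.2]{Cifani/Jakobsen}), I would not reproduce it in full but refer to those sources, making this theorem a short corollary of the preceding development.
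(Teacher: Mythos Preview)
Your proposal is correct and follows essentially the same approach as the paper: the paper's proof is just the short paragraph preceding the theorem statement, which invokes Lemmas \ref{th:existence_scheme} and \ref{lem:time-reg}, Kolmogorov's compactness theorem \cite[Theorem 3.8]{Holden/Risebro}, inheritance of the a priori bounds by the limit, and Remark \ref{conv_rem} to identify the limit as the entropy solution. You have supplied a bit more detail (lower semicontinuity of the $BV$ seminorm, tightness for the upgrade from $L^1_{\mathrm{loc}}$ to $L^1$, uniqueness forcing full-sequence convergence, handling the ``wrong-time'' $\eta'$-term), but the skeleton is identical.
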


\begin{remark}
 This result provides a proof for the existence result Theorem 5.3 in
\cite{Cifani/Jakobsen} 
for $L^1\cap L^\infty\cap BV$
entropy solutions of \eqref{1}, and then the general existence result in
$L^1\cap L^\infty$ follows by a density argument using the $L^1$-contraction.
\end{remark}


\section{Error estimates}\label{sec:framework}

In this section we give different error estimates and convergence
results for our schemes, 
estimates that are valid for general Levy measures and better
estimates that holds for fractional measures satisfying \eqref{fractional_meas}.
To give the general result, we need the following quantities:
\begin{align*}
I_1^{\epsilon,r}&=\frac{1}{\epsilon}\int_{|z|\leq r}|z|^2\,\dif\mu(z),\\
I_2^{\epsilon,\delta,r}&=\left(\frac{\Delta
x}{\epsilon}+\frac{\Delta
t}{\delta}\right)\Bigg(\int_{r<|z|\leq1}|z|\,\dif\mu(z)+\int_{|z|>1}\dif\mu(z)\Bigg);\\
I_3^{r}&=\mathcal{E}_{\Delta t}(\bar u)\int_{|z|>r}\dif\mu(z).
\end{align*}
\begin{theorem}\label{th:kuz_prior}\emph{(Error estimates)} Assume
  (A.2) -- (A.4) hold, and let $u$ be the entropy solution of \eqref{1}.
\smallskip

\noindent (a) Let $U$ be a solution of
the implicit method \eqref{scheme_implicit} and $\bar u$ defined by
\eqref{interp_impl}. Then for all $\epsilon>0$, $0<\delta<T$, and
$\frac{\Delta x}{2}<r\leq 1$, 
\begin{align}
\label{implicit-kuz}
\|u(\cdot,T)-\bar u(\cdot,T)\|_{L^{1}(\mathbb{R}^d)}\leq
C_T\bigg(\epsilon+\mathcal{E}_\delta(u)\vee
\mathcal{E}_\delta(\bar u)+I^{\epsilon,r}_1+I_2^{\epsilon,\delta,r}\bigg),
\end{align}

\noindent (b) Assume also \eqref{cfl} holds, and let $U$ be a solution of
the explicit method \eqref{scheme_explicit} and $\bar u$ defined by
\eqref{interp_expl}. Then for all
$\epsilon>0$, $0<\delta<T$, and $\frac{\Delta x}{2}<r\leq 1$,
\begin{equation}\label{explicit-kuz}
\begin{split}
\|u(\cdot,T)-\bar u(\cdot,T)\|_{L^{1}(\mathbb{R}^d)} \leq
C_T\bigg(\epsilon+\mathcal{E}_\delta(u)\vee\mathcal{E}_\delta(\bar u)+I^{\epsilon,r}_1+I_2^{\epsilon,\delta,r}+I_3^{r}\bigg),
\end{split}
\end{equation}
\end{theorem}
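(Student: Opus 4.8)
The plan is to use the Kuznetsov type lemma (Lemma \ref{lem:kuznetsov}) with $v=\bar u$, the interpolated numerical solution, and then estimate all the nonlocal terms on the right-hand side of that lemma using the cell-entropy inequalities of Proposition \ref{th:conv}. The basic strategy mirrors the classical Kuznetsov argument: the Kuznetsov lemma already gives $\|u_0-v_0\|_{L^1}\leq C\dx\,|u_0|_{BV}\leq C_T\epsilon$ (choosing $\epsilon\geq\dx$, or absorbing this into the $\dx/\epsilon$ terms) plus the modulus-of-continuity term $C(\epsilon+\mathcal E_\delta(u)\vee\mathcal E_\delta(\bar u))$, so the work is entirely in showing that the remaining double-integral terms are bounded by $C_T(I_1^{\epsilon,r}+I_2^{\epsilon,\delta,r})$ for the implicit scheme, with the extra $I_3^r$ appearing for the explicit scheme.

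First I would rewrite the cell-entropy inequality \eqref{j4} (resp. \eqref{u1}) in a ``weak'' form: multiply by $\dx^d\,\varphi^{\epsilon,\delta}(x_\alpha^+,y,t_n^+,s)$ (with the appropriate interpolation matching \eqref{interp_impl}/\eqref{interp_expl}), sum over $\alpha$ and $n$, and integrate in $(y,s)$ over $Q_T$. Using summation by parts in $n$ to move the discrete time-difference onto $\partial_t\varphi^{\epsilon,\delta}$, and the Toeplitz/translation property Lemma \ref{lem:properties}(b) together with the discrete Kato inequality Lemma \ref{lem:app1}, one converts the discrete operators $\sum_\beta G_\beta^{\alpha,r}$ into approximations of the continuous operator $\Levy_r^{\mu^\ast}[\varphi]$ and $\sum_\beta G_{\beta,r}^\alpha$ into an approximation of $\Levy^{\mu,r}[A(\bar u)]$. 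Subtracting this discrete entropy inequality (tested against $\varphi^{\epsilon,\delta}$) from the right-hand side of the Kuznetsov lemma, the leading-order terms cancel and one is left with consistency errors: (i) the error from replacing the measure $\mu$ by its truncation $\mathbf 1_{|z|>\dx/2}\mu$, which on $|z|\leq r$ costs $\tfrac1\epsilon\int_{|z|\leq r}|z|^2\,d\mu$ via a second-order Taylor estimate on $\Omega_\epsilon$ (this is $I_1^{\epsilon,r}$); (ii) the error from replacing $\varphi^{\epsilon,\delta}(x,\cdot)$ and $A(\bar u)$ by their piecewise-constant interpolations and the upwind gradient $\hat D_{\dx}$ by $\nabla$, which for $|z|>r$ and for the $\gamma^{\mu,r}$-drift term costs $(\tfrac{\dx}\epsilon+\tfrac{\dt}\delta)(\int_{r<|z|\leq1}|z|\,d\mu+\int_{|z|>1}d\mu)$ (this is $I_2^{\epsilon,\delta,r}$), using $|\nabla\Omega_\epsilon|_{L^1}\lesssim\tfrac1\epsilon$, $|\partial_t\omega_\delta|_{L^1}\lesssim\tfrac1\delta$, and the uniform $BV$ and $L^1$ bounds on $\bar u$ from Lemma \ref{th:existence_scheme}; and (iii) for the explicit scheme only, the time-lag error $\eta'(U^{n+1}_\alpha,k)$ vs.\ $\eta'(U^{n}_\alpha,k)$ noted in the remark after Proposition \ref{th:conv}, which produces a factor $\mathcal E_{\dt}(\bar u)\int_{|z|>r}d\mu$, namely $I_3^r$.

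The main obstacle will be the bookkeeping in handling the nonsymmetric part of the L\'evy measure: the drift correction terms $\gamma^{\mu^\ast,r}\cdot\nabla_x\varphi$ and $\gamma^{\mu,r}$ in the Kuznetsov lemma must be matched precisely against the discrete tridiagonal contribution $G^{\alpha,\beta}$ built from the upwind operators $D^\gamma_l$ in \eqref{diff_operator}, and one must verify that the upwinding choice makes the sign work out so that the consistency error is genuinely first order (controlled by $\dx/\epsilon$) rather than producing an uncontrolled term. A second delicate point is that the $\eta'$-term $\eta'(\bar u(x,t),u(y,s))\,\Levy^{\mu,r}[A(\bar u(\cdot,t))](x)$ in the Kuznetsov lemma is nonlocal in $x$ and must be paired with $\sum_\beta G_{\beta,r}^\alpha A(U^n_\beta)$; here one uses that $\sgn(\cdot)A(\cdot)$-type monotonicity is not directly available on this term, so it is kept as is and only the consistency error (interpolation of $\varphi$ and of $A(\bar u)$, truncation of $\mu$ at scale $\dx/2$) is estimated, again landing in $I_1$ and $I_2$. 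Everything else — the $f\equiv0$ assumption simplifying away the $q(u,k)\cdot\nabla_x\varphi$ terms in this section, the symmetrization identity $\int\varphi\,\Levy^\mu[\psi]=\int\psi\,\Levy^{\mu^\ast}[\varphi]$, and the choice of which time-interpolant matches which scheme — is routine, and the detailed computation is deferred to Section \ref{sec:pf} as the paper indicates.
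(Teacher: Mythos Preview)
Your proposal is correct and follows essentially the same route as the paper: apply the Kuznetsov lemma with $v=\bar u$, feed in the cell-entropy inequality through the time-derivative term (via summation by parts and the piecewise-constant test function $\bar\varphi^{\epsilon,\delta}$), and then estimate the resulting nonlocal residuals, with the explicit scheme picking up the extra $I_3^r$ from the time-lag in $\eta'$. One small clarification: the singular contributions (the paper's $H_1$ and $H_2$) do not actually cancel against each other---one acts on $y$ and the other on $x$---so each is bounded \emph{separately} by $C_T\epsilon^{-1}\int_{|z|\leq r}|z|^2\,d\mu$ via a second-order Taylor expansion of $\Omega_\epsilon$, which is what produces $I_1^{\epsilon,r}$; the genuine cancellation you describe occurs only in the $\eta'\cdot\Levy^{\mu,r}$ and drift terms, leaving the $(\bar\varphi-\varphi)$ and $(\hat D-\nabla)$ residuals that give $I_2^{\epsilon,\delta,r}$.
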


The proof of this result will be given in Section \ref{sec:pf}.

\begin{corollary}[Convergence]
Under the assumptions of Theorem \ref{th:kuz_prior}, the solutions of
the implicit method \eqref{scheme_implicit} and the explicit
method \eqref{scheme_explicit} both converge to the unique entropy solution
of \eqref{1} as $\dx,\dt\ra0$.
\end{corollary}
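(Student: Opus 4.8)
The plan is to deduce convergence directly from the error estimates in Theorem~\ref{th:kuz_prior} by choosing the free parameters $\epsilon$, $\delta$, and $r$ as suitable functions of $\dx$ and $\dt$ so that every term on the right-hand side of \eqref{implicit-kuz} (respectively \eqref{explicit-kuz}) tends to zero. First I would recall from assumption (A.3) that $\int_{|z|>0}|z|^2\wedge 1\,\dif\mu(z)<\infty$, so that all the $\mu$-integrals appearing in $I_1^{\epsilon,r}$, $I_2^{\epsilon,\delta,r}$, and $I_3^{r}$ are finite; in particular $\int_{|z|\leq r}|z|^2\,\dif\mu(z)\to 0$ as $r\to 0^+$ by dominated convergence, while $\int_{r<|z|\leq 1}|z|\,\dif\mu(z)+\int_{|z|>1}\dif\mu(z)$ is bounded by a constant depending only on $\mu$ (for $r\leq 1$), and $\int_{|z|>r}\dif\mu(z)$ may blow up as $r\to 0$ but only at a controlled rate.

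Next I would treat the implicit method. Choose, for instance, $r=r(\dx)\to 0^+$ with $r>\tfrac{\dx}{2}$ (e.g.\ $r=\sqrt{\dx}$ or $r=\dx^{1/4}$), then choose $\epsilon=\epsilon(\dx)\to 0^+$ slowly enough that $I_1^{\epsilon,r}=\tfrac1\epsilon\int_{|z|\leq r}|z|^2\,\dif\mu(z)\to 0$ (possible since the integral vanishes as $r\to0$), and finally choose $\delta=\delta(\dt)\to0^+$ with $\tfrac{\dt}{\delta}\to0$, which forces $I_2^{\epsilon,\delta,r}=\big(\tfrac{\dx}{\epsilon}+\tfrac{\dt}{\delta}\big)\cdot O(1)\to0$ provided also $\tfrac{\dx}{\epsilon}\to0$. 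It remains to check $\mathcal{E}_\delta(u)\vee\mathcal{E}_\delta(\bar u)\to0$: for $u$ this is the time-continuity in Theorem~\ref{thm:WP} (so $\mathcal{E}_\delta(u)\leq\sigma(\delta)\to0$), and for $\bar u$ this is Lemma~\ref{lem:time-reg}(a), which gives $\mathcal{E}_\delta(\bar u)\leq\sigma_\mu(\delta+\dt)\to0$. One can make all these requirements compatible, e.g.\ by taking $r=\dx^{1/4}$, $\epsilon=\big(\int_{|z|\leq\dx^{1/4}}|z|^2\dif\mu\big)^{1/2}\vee\dx^{1/2}$, and $\delta=\dt^{1/2}$; then every term in \eqref{implicit-kuz} goes to zero, so $\|u(\cdot,T)-\bar u(\cdot,T)\|_{L^1}\to 0$. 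Since $T$ was arbitrary this gives convergence.

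For the explicit method the argument is identical, using \eqref{explicit-kuz} instead, Lemma~\ref{lem:time-reg}(b) for $\mathcal{E}_\delta(\bar u)$, and the CFL condition \eqref{cfl} which is assumed; the only additional term is $I_3^{r}=\mathcal{E}_{\dt}(\bar u)\int_{|z|>r}\dif\mu(z)$. Here I would use that $\mathcal{E}_{\dt}(\bar u)\leq\sigma_\mu(2\dt)$ decays like a power of $\dt$ while $\int_{|z|>r}\dif\mu(z)$ grows at most like a negative power of $r$ (by (A.3), $\int_{1<|z|<r^{-1}}\dif\mu\leq r^{-2}\int|z|^2\wedge1\,\dif\mu$ type bound, or more simply $\int_{r<|z|\leq1}\dif\mu\leq r^{-2}\int_{|z|\leq1}|z|^2\dif\mu$); so choosing $r$ a small enough power of $\dx$ (consistent with $r>\tfrac\dx2$ and the CFL link $\dt\lesssim\dx^2$ or $\dx$) makes $I_3^r\to0$ as well.

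The main obstacle, such as it is, is purely bookkeeping: one must choose $\epsilon,\delta,r$ as powers (or slightly sub-polynomial functions) of $\dx,\dt$ so that the competing requirements $\epsilon\to0$, $\tfrac1\epsilon\int_{|z|\leq r}|z|^2\dif\mu\to0$, $\tfrac{\dx}\epsilon\to0$, $\tfrac\dt\delta\to0$, $\delta\to0$, and $\mathcal{E}_{\dt}(\bar u)\int_{|z|>r}\dif\mu\to0$ are simultaneously satisfied. Under (A.3) alone the decay of $\int_{|z|\leq r}|z|^2\dif\mu$ need not be quantitative, so $\epsilon$ may have to be chosen as its square root rather than a fixed power of $\dx$; this is harmless but should be stated carefully. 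If instead \eqref{fractional_meas} holds one gets the explicit rates quoted in the introduction, but for the qualitative convergence corollary the soft argument above suffices.
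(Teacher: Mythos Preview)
Your overall strategy is sound, but there is a genuine gap in the argument. You claim that
\[
\int_{r<|z|\leq 1}|z|\,\dif\mu(z)+\int_{|z|>1}\dif\mu(z)
\]
is bounded by a constant depending only on $\mu$, uniformly for $r\leq 1$, and then write $I_2^{\epsilon,\delta,r}=\big(\tfrac{\dx}{\epsilon}+\tfrac{\dt}{\delta}\big)\cdot O(1)$. This is false under (A.3) alone: for the fractional Laplacian with $\lambda\in[1,2)$ one has $\int_{r<|z|\leq 1}|z|\,\dif\mu(z)\sim r^{1-\lambda}$ (or $|\ln r|$ when $\lambda=1$), which blows up as $r\to 0$. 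With your choices $r=\dx^{1/4}$ and $\delta=\dt^{1/2}$, the term $\tfrac{\dt}{\delta}\int_{r<|z|\leq 1}|z|\,\dif\mu$ need not vanish for the implicit scheme, where no CFL condition ties $\dt$ to $\dx$. Your parameter choices therefore do not guarantee $I_2^{\epsilon,\delta,r}\to 0$ in general.

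The paper's proof avoids this bookkeeping entirely by taking the limits \emph{sequentially} rather than simultaneously: first send $\dx,\dt\to 0$ with $\epsilon,\delta,r>0$ fixed (so that $I_2^{\epsilon,\delta,r}\to 0$ and $I_3^r\to 0$ trivially, and $\mathcal{E}_\delta(\bar u)\leq\sigma_\mu(\delta+\dt)\to\sigma_\mu(\delta)$), then send $r\to 0$ (so $I_1^{\epsilon,r}\to 0$ by dominated convergence), and finally send $\epsilon,\delta\to 0$. Since the error estimate holds for every admissible choice of the free parameters, this yields $\limsup_{\dx,\dt\to 0}\|u(\cdot,T)-\bar u(\cdot,T)\|_{L^1}=0$ with no need to balance competing powers. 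Your approach can be repaired by tracking the $r$-dependence of the $I_2$ factor and choosing $\delta$ as a function of both $\dt$ and $\dx$, but the sequential-limit argument is both shorter and more robust.
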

\begin{proof}
The result follows from the error estimates of Theorem
\ref{th:kuz_prior} by first sending 
$\dx,\dt\ra0$, then $r\ra0$, and finally $\eps,\delta\ra0$.
\end{proof}

We will now see how Theorem \ref{th:kuz_prior} (along with Lemma
\ref{lem:time-reg_frac}) can be used to produce explicit
rates of convergence for our scheme in the case of fractional measures
satisfying \eqref{fractional_meas}. First we define
\begin{equation}\label{rate1}
\sigma_{\lambda}^{IM}(\tau)=\left\{
\begin{array}{ll}
\tau^\frac{1}{2}&\lambda\in(0,1),\\
\tau^\frac{1}{2}|\log\tau|&\lambda=1,\\
\tau^\frac{2-\lambda}{2}&\lambda\in(1,2),
\end{array}
\right.
\end{equation}
and
\begin{equation}\label{rate2}
\sigma_{\lambda}^{EX}(\tau)=\left\{
\begin{array}{ll}
\tau^\frac{1}{2}&\lambda\in\left(0,\frac{2}{3}\right],\\
\tau^\frac{2-\lambda}{2+\lambda}&\lambda\in(\frac{2}{3},1)\cup(1,2).\\
\end{array}
\right.
\end{equation}

\begin{theorem}\emph{(Convergence rate for fractional measures)}\label{th:conv_frac_meas}
Under the assumptions of Lemma \ref{lem:time-reg_frac} (including
\eqref{fractional_meas} and a CFL condition for the implicit scheme),
for all $\lambda\in(0,2)$,
\begin{equation*}
\begin{split}
\|u(\cdot,T)-\bar u(\cdot,T)\|_{L^{1}(\mathbb{R}^d)}
\leq\left\{
\begin{split}
C_T\,\sigma_\lambda^{IM}(\Delta x)\qquad&\text{ for the implicit method
\eqref{scheme_implicit},}\\
&\\
C_T\,\sigma_\lambda^{EX}(\Delta x)\qquad&\text{ for the explicit method
\eqref{scheme_explicit}.}
\end{split}
\right.
\end{split}
\end{equation*}
\end{theorem}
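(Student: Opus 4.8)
The plan is to combine the abstract error estimate of Theorem~\ref{th:kuz_prior} with the refined time-regularity estimates of Lemma~\ref{lem:time-reg_frac}, then optimize the free parameters $\epsilon,\delta,r$ (and implicitly the relation between $\dt$ and $\dx$ coming from the CFL conditions) against $\dx$. First I would substitute the fractional-measure bound \eqref{fractional_meas} into the quantities $I_1^{\epsilon,r}$, $I_2^{\epsilon,\delta,r}$, $I_3^r$ appearing in Theorem~\ref{th:kuz_prior}. A polar-coordinate computation gives, for $r\le 1$,
\begin{equation*}
\int_{|z|\le r}|z|^2\,\dif\mu(z)\le C\begin{cases} r^{2-\lambda}&\lambda\in(0,2),\end{cases}\qquad \int_{r<|z|\le1}|z|\,\dif\mu(z)\le C\begin{cases} 1&\lambda<1,\\ |\ln r|&\lambda=1,\\ r^{1-\lambda}&\lambda>1,\end{cases}
\end{equation*}
and $\int_{|z|>r}\dif\mu(z)\le C r^{-\lambda}$ for $r<1$ (plus a bounded tail from $|z|>1$). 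Hence $I_1^{\epsilon,r}\lesssim \epsilon^{-1}r^{2-\lambda}$, $I_2^{\epsilon,\delta,r}\lesssim (\dx/\epsilon+\dt/\delta)\,r^{-(\lambda-1)_+}$ (with a $|\ln r|$ at $\lambda=1$), and $I_3^r\lesssim \mathcal{E}_{\dt}(\bar u)\,r^{-\lambda}$.

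Next I would bound the time-modulus terms. By Lemma~\ref{lem:time-reg_frac} (and Theorem~\ref{thm:WP} for the exact solution $u$), $\mathcal{E}_\delta(u)\vee\mathcal{E}_\delta(\bar u)\le \sigma_\lambda(\delta+\dt)$ and $\mathcal{E}_{\dt}(\bar u)\le\sigma_\lambda(2\dt)$, where $\sigma_\lambda(\tau)\sim\tau^{1/\lambda}$ for $\lambda>1$, $\sim\tau|\ln\tau|$ (or $\tau^{\alpha}$, $\alpha<1$, for the explicit scheme) at $\lambda=1$, and $\sim\tau$ for $\lambda<1$. For the \emph{implicit} method the CFL condition \eqref{CFLjj} forces $\dt\lesssim\dx^{1\vee\lambda}$; choosing $\dt$ comparable to this upper bound, a natural choice is $\delta\sim\dt$ and then one optimizes $\epsilon$ and $r$ in $\dx$. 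The dominant balance is $\epsilon\sim I_1^{\epsilon,r}$ together with keeping $I_2$ and $I_3$ subordinate: taking $r\sim\dx$ (the smallest admissible value, since we need $r>\dx/2$) and $\epsilon\sim\dx^{(2-\lambda)/2}$ for $\lambda\in(1,2)$ yields $\epsilon+I_1^{\epsilon,r}\sim\dx^{(2-\lambda)/2}$, while $I_2\sim(\dx/\epsilon+1)\dx^{1-\lambda}\sim\dx^{1-\lambda+(2-\lambda)/2\wedge 0}$ must be checked to be no larger — here one verifies $\dx^{1-\lambda}\cdot\dx^{?}$ is controlled because $\dt/\delta\lesssim 1$ and $\dx/\epsilon\cdot r^{1-\lambda}=\dx^{1-(2-\lambda)/2}\dx^{1-\lambda}=\dx^{(2-\lambda)/2}$ exactly. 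For $\lambda<1$ all the singular $r$-powers disappear and one gets $\epsilon\sim\dx^{1/2}$, $r\sim\dx^{1/2}$, recovering the classical Kuznetsov rate $\dx^{1/2}$; the $\lambda=1$ case produces the logarithmic correction from $|\ln r|$ and from $\sigma_1$. This reproduces $\sigma_\lambda^{IM}$ in \eqref{rate1}.

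For the \emph{explicit} method one must additionally carry the term $I_3^r\sim\mathcal{E}_{\dt}(\bar u)\,r^{-\lambda}\sim\sigma_\lambda(\dt)\,r^{-\lambda}$, and the CFL condition is the more restrictive \eqref{cfl}/\eqref{CFLj}, giving $\dt\lesssim\hat\sigma_\lambda(\dx)\sim\dx^{1\vee\lambda}$ as well but with $\sigma_\lambda(\dt)\sim\dt^{1/\lambda}\sim\dx$ for $\lambda>1$. Plugging $\dt\sim\dx^\lambda$, $I_3^r\sim\dx\, r^{-\lambda}$; balancing $\epsilon\sim I_1\sim\epsilon^{-1}r^{2-\lambda}$ (so $\epsilon\sim r^{(2-\lambda)/2}$) against $I_3\sim\dx r^{-\lambda}$ and against $I_2\sim(\dx/\epsilon+\dt/\delta)r^{-(\lambda-1)_+}$ leads, after setting $\delta\sim\dt$ and optimizing the single remaining parameter $r$ in $\dx$, to the exponent $(2-\lambda)/(2+\lambda)$; one checks this is $\ge 1/2$ exactly when $\lambda\le 2/3$, so the rate saturates at $\dx^{1/2}$ there, giving $\sigma_\lambda^{EX}$ in \eqref{rate2}. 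In both cases I would finally absorb all constants into $C_T$ (noting the $BV$, $L^1$, $L^\infty$ bounds from Lemma~\ref{th:existence_scheme} keep them uniform in $\dx$), and remark that $\sigma_\lambda(\dt)\le\sigma_\lambda^{IM/EX}(\dx)$ for the chosen CFL scaling so the time-modulus contributions never dominate.

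The main obstacle I anticipate is the bookkeeping of the optimization: there are three coupled parameters ($\epsilon$, $\delta$, $r$) plus the CFL-constrained ratio $\dt/\dx^{1\vee\lambda}$, and the worst term switches identity as $\lambda$ crosses $1$ (and, for the explicit scheme, as $\lambda$ crosses $2/3$), so one must split into cases $\lambda<1$, $\lambda=1$, $1<\lambda<2$ and, for the explicit method, further at $\lambda=2/3$, checking in each regime that the chosen powers of $\dx$ make \emph{all} of $\epsilon$, $I_1$, $I_2$, $I_3$, and the time modulus simultaneously $O(\sigma_\lambda^{IM/EX}(\dx))$. The computations themselves are elementary power-counting in $\dx$; the care lies in confirming that the admissibility constraint $r>\dx/2$ and the CFL bounds are compatible with the optimal choices (they are, because the optimal $r$ is always a nonnegative power of $\dx$ that is $\ge\dx$, and $c$ in \eqref{CFLjj}/\eqref{CFLj} can be taken small).
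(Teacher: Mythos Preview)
Your overall strategy---plug the fractional bounds into Theorem~\ref{th:kuz_prior}, use Lemma~\ref{lem:time-reg_frac} for the time moduli, and optimize in $\epsilon,\delta,r$---is exactly the paper's. But your choice $\delta\sim\dt$ is a genuine error that breaks the argument in every regime.

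With $\delta\sim\dt$ the factor $\dt/\delta$ in $I_2^{\epsilon,\delta,r}$ is of order~$1$, not a positive power of $\dx$. For $\lambda<1$ this leaves $I_2\gtrsim C\cdot 1$, an $O(1)$ term that never vanishes; for $\lambda>1$ with $r=\dx$ it leaves $I_2\gtrsim 1\cdot r^{1-\lambda}=\dx^{1-\lambda}\to\infty$. You noticed the problematic $\dx^{1-\lambda}$ but then dismissed it by saying ``$\dt/\delta\lesssim 1$'', which is not enough: boundedness of $\dt/\delta$ does not make $(\dt/\delta)\,r^{1-\lambda}$ small. The paper instead takes $\delta$ \emph{strictly larger} than $\dt$, balancing the time-modulus term $\mathcal{E}_\delta\sim\sigma_\lambda(\delta)$ against the $\dt/\delta$ factor in $I_2$: for $\lambda\in(1,2)$ one takes $\delta=\dx^{\lambda/2}$ (so that with $\dt\lesssim\dx^\lambda$ one has $\dt/\delta\lesssim\dx^{\lambda/2}$ and hence $(\dt/\delta)r^{1-\lambda}\lesssim\dx^{(2-\lambda)/2}$, while $\mathcal{E}_\delta\lesssim\delta^{1/\lambda}=\dx^{1/2}\le\dx^{(2-\lambda)/2}$), and for $\lambda\le 1$ one takes $\delta=\dx^{1/2}$. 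A secondary issue: for $\lambda<1$ your choice $r\sim\dx^{1/2}$ gives $I_1\sim\epsilon^{-1}\dx^{(2-\lambda)/2}$, which after balancing against $\epsilon$ yields only $\dx^{(2-\lambda)/4}<\dx^{1/2}$; the paper takes $r=\dx$ throughout, giving $I_1\sim\epsilon^{-1}\dx^{2-\lambda}$ and then $\epsilon=\dx^{1/2}$ works. Once you fix the $\delta$ and $r$ choices the rest of your outline (including the explicit-scheme optimization leading to $(2-\lambda)/(2+\lambda)$ and the $2/3$ threshold) goes through as you describe.
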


Note that the rate for the explicit method is worse due to the extra
term $I_3^r$ in Theorem \ref{th:kuz_prior}.

\begin{corollary}[Explicit scheme when $\lambda=1$]
Let the assumptions of Lemma \ref{lem:time-reg_frac} (b) hold with
$\lambda=1$ and let $\alp\in(1,2)$ be arbitrary. If the stronger CLF condition
$C\frac{\dt}{\dx^\alp}<1$ holds, then
\begin{equation*}
\|u(\cdot,T)-\bar u(\cdot,T)\|_{L^{1}(\mathbb{R}^d)}
\leq
C_T\,\sigma_\alp^{EX}(\Delta x)\quad\text{ for the explicit method \eqref{scheme_explicit}.}
\end{equation*}
\end{corollary}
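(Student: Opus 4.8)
The plan is to deduce the estimate from Theorem~\ref{th:conv_frac_meas} by reinterpreting $\mu$ as a fractional measure of order $\alp$ rather than of order $1$. The crucial elementary observation is that \eqref{fractional_meas} with $\lambda=1$ automatically implies \eqref{fractional_meas} with $\lambda=\alp$ for every $\alp\in(1,2)$: for $0<|z|<1$ one has $|z|^{d+\alp}\leq|z|^{d+1}$ since $\alp>1$, hence
\[
0\leq\mathbf{1}_{|z|<1}\,\dif\mu(z)\leq c_1\,\frac{\dif z}{|z|^{d+1}}\leq c_1\,\frac{\dif z}{|z|^{d+\alp}},
\]
so \eqref{fractional_meas} holds with $c_\alp:=c_1$. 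Thus $\mu$ is simultaneously a fractional measure of every order in $[1,2)$.

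Next I would check that the assumed condition $C\frac{\dt}{\dx^\alp}<1$ is precisely the CFL condition \eqref{CFLj} at order $\alp$, up to the value of $C$: by \eqref{sigma} one has $\hat\sigma_\alp(\dx)=\dx^\alp$ for $\alp>1$, so \eqref{CFLj} at order $\alp$ reads $\bar cL_A\frac{\dt}{\dx^\alp}<1$, which holds once $C\geq\bar c(d,\alp)L_A$. It is also stronger than the CFL condition \eqref{CFLj} at order $1$ (where $\hat\sigma_1(\dx)=\dx/|\ln\dx|$), because $\dt\leq C\dx^\alp$ forces $\dt|\ln\dx|/\dx\leq C\dx^{\alp-1}|\ln\dx|\to0$ as $\dx\to0$; hence all the standing hypotheses of Lemma~\ref{lem:time-reg_frac}~(b) continue to hold.

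With these two remarks in hand, every hypothesis of Lemma~\ref{lem:time-reg_frac}~(b) and of Theorem~\ref{th:conv_frac_meas} is satisfied with $\lambda$ replaced by $\alp\in(1,2)$. Applying Theorem~\ref{th:conv_frac_meas} in the explicit case with this value of $\lambda$ then yields
\[
\|u(\cdot,T)-\bar u(\cdot,T)\|_{L^1(\R^d)}\leq C_T\,\sigma_\alp^{EX}(\dx),
\]
which is the claimed bound.

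The only point requiring care is routine bookkeeping: one must ensure that the constant $C$ in the hypothesis can be taken large enough to absorb $\bar c(d,\alp)L_A$ (and, if one wishes to keep the original $\lambda=1$ CFL condition simultaneously, that $\dx$ is small enough for the comparison above), and that the order-$\alp$ constants $c_\alp$ and $\bar c(d,\alp)$ are finite --- which they are, for each fixed $\alp$. There is no genuine analytic difficulty; the content of the corollary is simply that at the borderline order $\lambda=1$ (the gap in \eqref{rate2}) a stronger CFL condition yields the log-free convergence rate $\dx^{(2-\alp)/(2+\alp)}$, which tends to $\dx^{1/3}$ as $\alp\downarrow1$.
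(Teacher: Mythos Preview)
Your proof is correct and follows exactly the same approach as the paper: observe that \eqref{fractional_meas} with $\lambda=1$ implies \eqref{fractional_meas} with $\lambda=\alp$ for any $\alp\in(1,2)$, check that the assumed CFL condition is the one required at order $\alp$, and then invoke Theorem~\ref{th:conv_frac_meas} with $\lambda=\alp$. The paper's proof is terser (three lines), but your added detail---the explicit inequality $|z|^{-(d+1)}\leq|z|^{-(d+\alp)}$ on $|z|<1$ and the bookkeeping on the CFL constants---is exactly what underlies it.
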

\begin{proof}
Note that the CFL condition \eqref{CFLj} is satisfied and that the
assumption \eqref{fractional_meas} holds with any
$\lambda\in[1,2)$. Hence the result follows from the $\lambda>1$ case
in Theorem \ref{th:conv_frac_meas}.
\end{proof}


\begin{proof}[Proof of Theorem \ref{th:conv_frac_meas}]
Let us first give the proof for the implicit method
\eqref{scheme_implicit}. First we note that by
\eqref{fractional_meas}, 
\begin{equation*}
\begin{split}
\int_{|z|\leq r}|z|^2\ \dif\mu(z)\leq c_\lambda\int_{|z|\leq
r}\frac{|z|^2}{|z|^{d+\lambda}}\ \dif z\leq
O\left(r^{2-\lambda}\right)\quad\text{for all $\lambda\in(0,2)$, $r\leq1$,}
\end{split}
\end{equation*}
while
\begin{equation*}
\begin{split}
\int_{r<|z|\leq 1}|z|\ \dif\mu(z)\leq c_\lambda\int_{r<|z|\leq
1}\frac{|z|}{|z|^{d+\lambda}}\ \dif z=
\begin{cases}
O(1)&\text{if }\lambda\in(0,1),\\
O(|\ln r|)&\text{if }\lambda=1,\\
O\left(r^{1-\lambda}\right)&\text{if }\lambda\in(1,2).
\end{cases}
\end{split}
\end{equation*}
Using these estimates along with the CFL condition \eqref{CFLjj} and
Lemma \ref{lem:time-reg_frac}, we
find that the estimate \eqref{implicit-kuz} in Theorem
\ref{th:kuz_prior} takes the form  
\begin{equation*}
\begin{split}
&\|u(\cdot,T)-\bar u(\cdot,T)\|_{L^{1}(\mathbb{R}^d)}\leq C_T\begin{cases}
\epsilon+\delta+\frac{r^{2-\lambda}}{\epsilon}+\left(\frac{\Delta
x}{\epsilon}+\frac{\Delta x}{\delta}\right)&\text{if
$\lambda\in(0,1)$,}\\[0.2cm]
\epsilon+\delta\,|\ln\delta|+\frac{r}{\epsilon}+|\ln
r|\left(\frac{\Delta x}{\epsilon}+\frac{\Delta
x}{\delta}\right)&\text{if
$\lambda=1$,}\\[0.2cm]
\epsilon+\delta^{\frac{1}{\lambda}}+\frac{r^{2-\lambda}}{\epsilon}+r^{1-\lambda}\left(\frac{\Delta
x}{\epsilon}+\frac{\Delta x^\lambda}{\delta}\right)&\text{if
$\lambda\in(1,2)$.}
\end{cases}
\end{split}
\end{equation*}
The conclusion then follows by taking $r=\Delta x$ for all
$\lambda\in(0,2)$, $\epsilon=\delta=\sqrt{\Delta x}$ for
$\lambda\in(0,1]$, while $\epsilon=\Delta x^{\frac{2-\lambda}{2}}$
and $\delta=\Delta x^{\frac{\lambda}{2}}$ for $\lambda\in(1,2)$.

For the explicit method \eqref{scheme_explicit} we also need to take
into account the extra $I_3$-term,
$$I^r_3=\sigma_\lambda(\Delta
t)\underbrace{\int_{|z|>r}\dif\mu(z)}_{O(r^{-\lambda})},$$
Lemma \ref{lem:time-reg_frac}, and the slightly more restrictive CFL
condition \eqref{CFLj}. The expression \eqref{explicit-kuz} in Theorem
\ref{th:kuz_prior} then takes the form 
\begin{equation*}
\begin{split}
&\|u(\cdot,T)-\bar u(\cdot,T)\|_{L^{1}(\mathbb{R}^d)}\\
&\leq C_T\begin{cases}
\epsilon+\delta+\frac{r^{2-\lambda}}{\epsilon}+\left(\frac{\Delta
x}{\epsilon}+\frac{\Delta x}{\delta}\right)+\frac{\Delta
x}{r^{\lambda}}&\text{if
$\lambda\in(0,1)$,}\\[0.2cm]
\epsilon+\delta^{\frac{1}{\lambda}}+\frac{r^{2-\lambda}}{\epsilon}+r^{1-\lambda}\left(\frac{\Delta
x}{\epsilon}+\frac{\Delta x^\lambda}{\delta}\right)+\frac{\Delta
x}{r^{\lambda}}&\text{if $\lambda\in(1,2)$.}
\end{cases}
\end{split}
\end{equation*}
We minimize two and two terms and take the maximum minimizers, first
w.r.t. $\eps$ and $\delta$ and then w.r.t. $r$, 
\begin{equation*}
\begin{split}
&\|u(\cdot,T)-\bar u(\cdot,T)\|_{L^{1}(\mathbb{R}^d)}\\
&\leq C_T\begin{cases}
r^{\frac{2-\lambda}2}+\Delta x^{\frac12}+\frac{\Delta
x}{r^{\lambda}},&\text{if }\lambda\in(0,1)\\[0.2cm]
r^{\frac{2-\lambda}2}+r^{\frac{1-\lambda}2}\Delta
x^{\frac12}+r^{\frac{1-\lambda}{1+\lambda}}\Delta x^{\frac\lambda{1+\lambda}}+\frac{\Delta
x}{r^{\lambda}}&\text{if } \lambda\in(1,2),
\end{cases}\\
&\leq  C_T\begin{cases}
\Delta x^{\frac12}+\Delta x^{\frac{2-\lambda}{2+\lambda}}&\text{if }\lambda\in(0,1),\\[0.2cm]
\dx^{\frac{2-\lambda}2}+\Delta x^{\frac{2-\lambda}{3-\lambda}}+\Delta x^{\frac{2-\lambda}{2+\lambda}}&\text{if } \lambda\in(1,2).
\end{cases}
\end{split}
\end{equation*}
The final result follows since
$\frac{2-\lambda}{3-\lambda}>\frac{2-\lambda}{2+\lambda}$ for
$\lambda\in(\frac12,2)$ and $\frac{2-\lambda}{2+\lambda}<\frac12$ for $\lambda\in(\frac23,2)$.
\end{proof}

 \begin{remark}\label{rem:disc_cut}
The rates can not be improved by taking a different truncation of the
singularity, i.e. replacing in the method
\begin{equation*}
\begin{split}
G_{\alpha,\beta}&=\frac1{\dx^d}\int_{R_{\alpha}}\int_{|z|>\frac{\Delta
x}{2}}\mathbf{1}_{R_\beta}(x+z)-\mathbf{1}_{R_\beta}(x)\
\dif\mu(z)\,\dif x
\end{split}
\end{equation*}
by
\begin{equation*}
\begin{split}
G_{\alpha,\beta}&=\frac1{\dx^d}\int_{R_{\alpha}}\int_{|z|>\rho_\lambda(\Delta
x)}\mathbf{1}_{R_\beta}(x+z)-\mathbf{1}_{R_\beta}(x)\
\dif\mu(z)\,\dif x.
\end{split}
\end{equation*}
The reason is that the function $\rho_\lambda$ that minimize
the error expression
\begin{equation*}
\begin{split}
\epsilon+\delta+\frac{\rho^{2-\lambda}_\lambda(\Delta
x)}{\epsilon}+\rho^{1-\lambda}_\lambda(\Delta x)\left(\frac{\Delta
x}{\epsilon}+\frac{\Delta x}{\delta}\right),
\end{split}
\end{equation*}
is always $\rho_\lambda(\Delta x)=O(\Delta x)$!
\end{remark}

\begin{remark}
We believe that the rates for the implicit schemes are optimal, at
least when there are nonlinear convection terms in the equation
(i.e. when $f\neq0$ in \eqref{1}, see Section
\ref{sec:convection_equations}). But we 
 have not found analytical examples confirming this, nor have we been
 able to observe the above rates in preliminary, but probably too crude,
 numerical tests. 
Maybe it is not straight forward to construct
analytical or numerical examples confirming the optimality of the
rates. We leave 
it as a challenge for people with more experience in realizing
numerical schemes to test the optimality numerically.
\end{remark}

\section{Convection-diffusion equations}\label{sec:convection_equations}
In this section we discuss how to extend the results established in
the previous sections to the case $f\neq 0$. Note that all the arguments needed
to handle the 
additional $f$-term are well-known. We consider the following numerical methods
\begin{align}
U_{\alpha}^{n+1}&=U_{\alpha}^{n}+\Delta t\sum_{l=1}^dD_l^{-}\hat
f_l(U_{\alpha}^{n+1},U_{\alpha+e_l}^{n+1})
+\Delta t\, \hat\Levy^\mu\langle A(U^{n+1})\rangle_\alpha,&&\text{(implicit)}\label{scheme_implicit_0}\\
U_{\alpha}^{n+1}&=U_{\alpha}^{n}+\Delta t\sum_{l=1}^dD_l^{-}\hat
f_l(U_{\alpha}^{n},U_{\alpha+e_l}^{n})
+\Delta t\, \hat\Levy^\mu\langle A(U^{n+1})\rangle_\alpha,&&\text{(expl-impl)}\label{scheme_implicit_1}\\
U_{\alpha}^{n+1}&=U_{\alpha}^{n}+\Delta t\sum_{l=1}^dD_l^{-}\hat
f_l(U_{\alpha}^{n},U_{\alpha+e_l}^{n}) +\Delta t\,
\hat\Levy^\mu\langle
A(U^{n})\rangle_\alpha,&&\text{(explicit)}\label{scheme_explicit_2}
\end{align}
where
\begin{itemize}
\item[\emph{(i)}] $D^{-}_lU_\alpha=\frac{1}{\Delta
    x}(U_\alpha-U_{\alpha-e_l})$ and
$\{e_l\}_l$ is the standard basis of $\R^d$, and
\smallskip
\item[\emph{(ii)}] $\hat f=(\hat f_1,\dots,\hat f_d)$ is a consistent
  (i.e. $\hat {f}(u,u)=f(u)$), 
  Lipschitz continuous numerical flux which is non-decreasing 
w.r.t.~the first variable and non-increasing w.r.t.~the second one.
\end{itemize}

\begin{remark}
Some examples of numerical fluxes $\hat f$ satisfying $(ii)$ are the
well-known Lax-Friedrichs flux, the 
Godunov flux, and the Engquist-Osher flux, cf.~e.g.~\cite{Kr:Book}.
\end{remark}

For the schemes \eqref{scheme_implicit_1} and
\eqref{scheme_explicit_2}, we also need the CFL conditions
\begin{align}\label{iki}
&2d\,L_F\frac{\Delta t}{\Delta x}+\bar c L_A\,\frac{\Delta
  t}{\hat\sigma_\mu(\Delta x)}<1\qquad\text{and}\qquad 2dL_F\frac{\Delta t}{\Delta x}<1
\end{align}
respectively (compare with \eqref{cfl}), where $\hat\sigma_\mu$
is defined in \eqref{sigma0} and $L_F$ is the Lipschitz constant of
$\hat f$. Then the all the a priori estimates and other results of Section 
\ref{sec:comp} continue to hold for the new schemes, and we still have
compactness via Kolmogorov's theorem. The modifications needed 
to identify the any limit as the unique entropy solution of \eqref{1} are
standard and can be found e.g. in Chapter 3 in \cite{Holden/Risebro},
and hence the convergence of the methods
\eqref{scheme_implicit_0}--\eqref{scheme_explicit_2} follows.

We will now give the statement of the result of Theorem
\ref{th:kuz_prior} that is valid for the current setting where $f\neq
0$. To do so we reuse the quantities $I_1^{\epsilon,r}$ and $I_3^{r}$
of section \ref{sec:framework}, but redefine $I_2^{\epsilon,\delta,r}$ as follows
\begin{equation*}
I_2^{\epsilon,\delta,r}=\left(\frac{\Delta
x}{\epsilon}+\frac{\Delta t}{\delta}\right)\bigg(1+\int_{ r<|z|\leq
1}|z|\,\dif\mu(z)+\int_{|z|>1}\dif\mu(z)\bigg).
\end{equation*}

\begin{theorem}\label{th:extended}\emph{(Error estimates)} Assume
  (A.1) -- (A.4) hold, and let $u$ be the entropy  solution of \eqref{1}. 
\smallskip

\noindent (a) Let $U$ be a solution of \eqref{scheme_implicit_0} {\em or}
\eqref{scheme_implicit_1} and $\bar u$ defined by 
\eqref{interp_impl}. For \eqref{scheme_implicit_1} we also
need the second CLF condition in \eqref{iki}. Then for all
$\epsilon>0$, $0<\delta<T$, and 
$\frac{\Delta x}{2}<r\leq 1$, 
\begin{align*}
\|u(\cdot,T)-\bar u(\cdot,T)\|_{L^{1}(\mathbb{R}^d)}\leq
C_T\bigg(\epsilon+\mathcal{E}_\delta(u)\vee
\mathcal{E}_\delta(\bar u)+I^{\epsilon,r}_1+I_2^{\epsilon,\delta,r}\bigg).
\end{align*}

\noindent (b) Assume also that the first CFL condition in \eqref{iki}
holds, and let $U$ be a solution of \eqref{scheme_explicit_2} and $\bar
u$ defined by \eqref{interp_expl}. Then for all
$\epsilon>0$, $0<\delta<T$, and $\frac{\Delta x}{2}<r\leq 1$,
\begin{equation*}
\|u(\cdot,T)-\bar u(\cdot,T)\|_{L^{1}(\mathbb{R}^d)} \leq
C_T\bigg(\epsilon+\mathcal{E}_\delta(u)\vee\mathcal{E}_\delta(\bar u)+I^{\epsilon,r}_1+I_2^{\epsilon,\delta,r}+I_3^{r}\bigg).
\end{equation*}
\end{theorem}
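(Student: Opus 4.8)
The plan is to follow the proof of Theorem \ref{th:kuz_prior} given in Section \ref{sec:pf} and to graft onto it the treatment of the convective $f$-terms exactly as in the classical Kuznetsov theory for conservation laws; as remarked above, no new idea beyond the $f\equiv0$ case is needed, only extra bookkeeping.

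First I would verify that, under the CFL conditions \eqref{iki}, the schemes \eqref{scheme_implicit_0}--\eqref{scheme_explicit_2} remain conservative, monotone and $L^1$-contractive. Monotonicity holds because $\hat f$ is consistent, Lipschitz, non-decreasing in its first and non-increasing in its second argument: the discrete convective operator $\sum_{l}D_l^-\hat f_l(\cdot,\cdot)$ then has nonnegative off-diagonal derivatives and a diagonal derivative bounded below by $-2dL_F\frac{\dt}{\dx}$, which together with the lower bound on $G_\alpha^\alpha$ from Lemma \ref{lem:properties} gives positivity precisely under \eqref{iki}. Consequently the a priori estimates \eqref{L1-cont}--\eqref{BV-cont} and the time-regularity bounds of Lemmas \ref{lem:time-reg} and \ref{lem:time-reg_frac} carry over unchanged (the Lipschitz flux only improves, never worsens, the time modulus), and by the usual monotone-scheme argument (cf.~\cite{Holden/Risebro}) the interpolant $\bar u$ satisfies, for every $k\in\R$ and $r>0$, the cell-entropy inequality \eqref{j4} (resp.~\eqref{u1}) augmented by a term $\dt\sum_{l}D_l^-\hat q_l(U_\alpha,U_{\alpha+e_l},k)$, where $\hat q$ is a Lipschitz numerical entropy flux consistent with $q(\cdot,k)$.

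Next I would insert $v=\bar u$ into Lemma \ref{lem:kuznetsov}, which already carries the convective double integral $-\iint_{Q_T}\iint_{Q_T}q(v(x,t),u(y,s))\cdot\nabla_x\varphi^{\epsilon,\delta}\,\dif w$. Combining the augmented cell-entropy inequality for $\bar u$ with the entropy inequality \eqref{entropy_ineq} for $u$, every term not involving $f$ is estimated exactly as in Section \ref{sec:pf} and produces $\epsilon$, $\mathcal{E}_\delta(u)\vee\mathcal{E}_\delta(\bar u)$, $I_1^{\epsilon,r}$, the $\mu$-dependent part of $I_2^{\epsilon,\delta,r}$, and, in the explicit-diffusion case, $I_3^r$. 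The one genuinely new contribution is the defect between $q(\bar u(x,t),u(y,s))$ and the numerical entropy flux $\hat q$ evaluated at neighbouring cell values; by consistency of $\hat q$ and the Lipschitz bound on $\hat f$ this defect is $O(\dx)$ per cell, and after summation against $\nabla_x\Omega_\epsilon$ in $x$ and $\omega_\delta$ in $t$ it is controlled by $C\bigl(\tfrac{\dx}{\epsilon}+\tfrac{\dt}{\delta}\bigr)\bigl(|\bar u|_{BV}+|u|_{BV}\bigr)$, which is exactly the extra summand $1$ now sitting inside the redefined $I_2^{\epsilon,\delta,r}$. Adding up the contributions yields the estimates of parts (a) and (b).

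The main obstacle is organizational rather than conceptual: one must propagate the convective flux terms through the same double mollification $\varphi^{\epsilon,\delta}(x,y,t,s)=\Omega_\epsilon(x-y)\omega_\delta(t-s)$ used for the nonlocal terms, and keep track of the time-discretization error of the flux. This error is present for the semi-implicit scheme \eqref{scheme_implicit_1} and the fully explicit scheme \eqref{scheme_explicit_2}, is tamed by the bound $2dL_F\frac{\dt}{\dx}<1$ (the second condition in \eqref{iki}, which is implied by the first), and is once more absorbed into the $\frac{\dt}{\delta}$-part of $I_2^{\epsilon,\delta,r}$; for the fully implicit scheme \eqref{scheme_implicit_0} both the flux and the diffusion are treated implicitly, so no CFL restriction is needed and this term does not arise. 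Once these classical conservation-law estimates are merged with the nonlocal analysis of Section \ref{sec:pf}, the proof is complete.
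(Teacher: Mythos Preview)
Your proposal is correct and matches the paper's own approach: the paper explicitly states that the proof of Theorem \ref{th:extended} is ``essentially equal to the proof of Theorem \ref{th:kuz_prior} augmented by standard Kuznetsov type computations to handle the $f$-term'' and skips the details, and you have sketched exactly those computations, correctly identifying that the convective defect contributes the additional ``$1$'' inside the redefined $I_2^{\epsilon,\delta,r}$.
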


The proof is essentially equal to the proof of Theorem
\ref{th:kuz_prior} augmented by standard Kuznetsov type computations to handle
the $f$-term, cf. e.g. \cite[Example 3.14]{Holden/Risebro}. We skip it.

\begin{remark}
It is easy to see that the contribution to the error from the
  discretization of the $f$-term  
is always less or of the same order as the contributions of the
other terms. In particular, for fractional measures
\eqref{fractional_meas}, we immediately get that the schemes
satisfy the error estimate of Theorem
\ref{th:conv_frac_meas}  with modulus $\sigma_{\lambda}^{IM}$ for
\eqref{scheme_implicit_0}  and \eqref{scheme_implicit_1} and modulus
$\sigma_{\lambda}^{EX}$ for \eqref{scheme_explicit_2}.
\end{remark}

\section{The proof of Theorem \ref{th:kuz_prior}}
\label{sec:pf}

\begin{proof}[Proof of Theorem \ref{th:kuz_prior} for the implicit
  method \eqref{scheme_implicit}] \
\smallskip

\noindent $1.\quad$ We use Lemma \ref{lem:kuznetsov} to
  compare the solution of the scheme to the exact solution. In the
  resulting inequality, we introduce the scheme via the time
  derivative and the initial/final terms. To do this, we use
  integration by parts on each interval 
  $(t_n,t_{n+1})$ and summation by parts to get discrete time
  derivatives on $\bar u$ so that we can use the cell entropy inequality
  \eqref{j4}. We get that (remember the definition of
  $\bar u$) 
\begin{align*}
&-\iint_{Q_T}\iint_{Q_T}\eta(\bar
u(x,t),u(y,s))\, \partial_t\varphi^{\epsilon,\delta}(x,y,t,s)\dif w+\text{initial and final terms}\\
&=\iint_{Q_T}\sum_{n=0}^{N-1}\sum_{\alp\in\mathbb{Z}^d}\bigg(\eta(U_{\alp}^{n+1},u(y,s))-\eta(U_{\alp}^{n},u(y,s))\bigg)\int_{R_{\alpha}}
\varphi^{\epsilon,\delta}(x,y,t_{n+1},s)\, \dif x\, \dif y\dif s.
\end{align*}
Let $\bar\varphi^{\epsilon,\delta}=\bar\varphi^{\epsilon,\delta}(x,y,t,s)$
be the function which for each $(y,s)\in Q_T$  is defined by
\begin{equation*}
\begin{split}
\varphi_\alpha^n=\frac{1}{\Delta
x^d}\int_{R_{\alpha}}\varphi^{\epsilon,\delta}(x,y,t_{n},s)\ \dif
x\quad\text{for}\quad x\in R_\alp,\ t\in(t_{n-1},t_n],
\end{split}
\end{equation*}
and use above equation along with the cell entropy inequality
\eqref{j4} and Lemma \ref{lemLG} to write the inequality of Lemma
\ref{lem:kuznetsov} in the following way
\begin{align*}
&\|u(\cdot,T)-\bar u(\cdot,T)\|_{L^{1}(\mathbb{R}^d)}\leq C_T\,(\Delta
x+\epsilon+\mathcal{E}_\delta(u)\vee\mathcal{E}_\delta(v))\\  
&\quad+\underbrace{\iint_{Q_T}\iint_{Q_T}\eta(A(\bar
  u(x,t)),A(u(y,s)))\,
  \Levy^{\mu^\ast}_r[\varphi^{\epsilon,\delta}(x,\cdot,t,s)](y)\ \dif
  w}_{H_1}\\ 
&\quad+\underbrace{\iint_{Q_T}\iint_{Q_T}\eta(A(\bar
  u(x,t)),A(u(y,s)))\,
  \hat\Levy^{\mu^\ast}_r[\bar\varphi^{\epsilon,\delta}(\cdot,y,t,s)](x)\
  \dif w}_{H_2}\\ 
&\quad+\underbrace{\iint_{Q_T}\iint_{Q_T}\eta'(\bar u(x,t),u(y,s))\,
  \Levy^{\mu,r}[A(\bar u(\cdot,t))](x)\,
  (\bar\varphi^{\epsilon,\delta}-\varphi^{\epsilon,\delta})(x,y,t,s)\
  \dif w}_{H_3}\\ 
&\quad+\underbrace{\iint_{Q_T}\iint_{Q_T}\eta(A(\bar
u(x,t)),A(u(y,s)))\,\gamma^{\mu^\ast,r}\cdot(\hat
D\bar\varphi^{\epsilon,\delta}-\nabla_x\varphi^{\epsilon,\delta})(x,y,t,s)\
\dif w}_{H_4}. 
\end{align*}
Here we have also used the notation 
$$\hat\Levy[\phi](x)=\hat\Levy_r[\phi](x)+\hat\Levy^r[\phi](x)+\gamma^{\mu,r}\cdot \hat
D_{\Delta x}\phi(x)$$ 
where $\hat\Levy$ is defined in \eqref{newB}, $\hat\Levy^r=\Levy^r$
for $r\geq\frac\dx2$, and
$$\hat\Levy_r[\phi](x)=\int_{\frac\dx
  2<|z|<r}\phi(x+z)-\phi(x)-\mathbf{1}_{|z|<1}z\cdot \hat
D_{\Delta x}\phi(x)\ \dif\mu(z).$$
Note that the discrete operator $\hat D_l=\hat D_{\dx,l}$ (see
\eqref{diff_operator}) always acts on the $x$-variable (the variable
of $\bar u$). To complete the 
proof we need to estimate $H_1,\dots,H_4$.
\smallskip

\noindent $2.\quad$ \emph{Estimates of $H_1$ and  $H_2$.} 
By Taylor's formula with integral remainder,
integration by parts, and Fubini (-- see
e.g. Lemma B.1 in \cite{Alibaud/Cifani/Jakobsen} for more details), 
\begin{equation*}
\begin{split}
|H_1|&\leq\iint_{Q_{T}}\iint_{Q_{T}}\int_{|z|\leq r}\int_0^1(1-\tau)\,\big|D_y\eta(A(\bar u(x,t)),A(u(y,s)))\big|\\
&\qquad\qquad\qquad\qquad
\cdot\omega_\delta(t-s)\,\underbrace{\big|D_{y}\Omega_{\epsilon}(x-y+\tau
z)\big|}_{=|D_{x}\Omega_{\epsilon}(x-y+\tau z)|}
|z|^2\ \dif \tau\, \dif\mu(z)\,\dif w\\
&\leq\frac12 L_A\int_{0}^T|u(\cdot,s)|_{BV(\R^d)}\,\dif s\int_{\R^d}|D_{x}\Omega_{\epsilon}(x)|\ \dif x\int_\R\omega_\eps(t)\,\dif t\int_{|z|\leq r}|z|^2\ \dif\mu(z))\\
&\leq
C_T\,L_A\,|u_0|_{BV(\R^d)}\,\epsilon^{-1}\int_{|z|\leq
r}|z|^2\,\dif\mu(z).
\end{split}
\end{equation*}
Here we also used Theorem \ref{thm:WP} and the standard estimate
$\int_{\R^d}|D_{x}\Omega_{\epsilon}(x)|\ \dif x=\mathcal O(\frac1\eps)$.

We find a similar estimate for $H_2$ via a regularization procedure and
the argument for $H_1$ above. Let $\bar\varphi^{\epsilon,\delta}_\varrho$ be a
mollification in the $x$-variable of $\bar\varphi^{\epsilon,\delta}$, i.e.
$\bar\varphi^{\epsilon,\delta}_\varrho=\bar\varphi^{\epsilon,\delta}\ast_x\Omega_\rho$
where the convolution is in $x$ only. Then
$\bar\varphi^{\epsilon,\delta}_\varrho$ is smooth in $x$, and
\begin{equation*}
\begin{split}
|\bar\varphi^{\epsilon,\delta}_\varrho(\cdot,y,t,s)|_{BV(\R^d)}\leq|\bar\varphi^{\epsilon,\delta}(\cdot,y,t,s)|_{BV(\R^d)}
\leq|\varphi^{\epsilon,\delta}(\cdot,y,t,s)|_{BV(\R^d)}=O\left(\epsilon^{-1}\right),
\end{split}
\end{equation*}
where the first inequality holds for all $\varrho$ small enough
(cf. e.g. \cite[Theorem 5.3.1]{Ziemer}), while the second one is
obvious. Let us call 
\begin{equation*}
\begin{split}
H_2^{\varrho}=\iint_{Q_T}\iint_{Q_T}\eta(A(\bar u(x,t)),A(u(y,s)))\,
\hat\Levy^{\mu^\ast}_r[\bar\varphi^{\epsilon,\delta}_\varrho(\cdot,y,t,s)](x)\
\dif w.
\end{split}
\end{equation*}
First note that
$\lim_{\varrho\rightarrow0}H_2^{\varrho}=H_2$ by the dominated
convergence theorem since we are integrating away from the singularity and 
$\bar\varphi^{\epsilon,\delta}_\varrho(\cdot,y,t,s)\ra\bar\varphi^{\epsilon,\delta}(\cdot,y,t,s)$
pointwise. Then, since 
$\bar\varphi^{\epsilon,\delta}_\varrho(\cdot,y,t,s)$ is
smooth, we repeat the argument used for $H_1$ and obtain
\begin{align*}
|H_2^\varrho|&\leq C_T\,L_A\,|u_0|_{BV(\R^d)}\,|\bar\varphi^{\epsilon,\delta}_\varrho|_{BV(\R^d)}\int_{\frac{\Delta x}{2}<|z|\leq r}|z|^2\,\dif\mu(z).
\end{align*}
Since
$|\bar\varphi^{\epsilon,\delta}_\varrho|_{BV(\R^d)}=O(\eps^{-1})$, we
can take the limit $\varrho\rightarrow0$ and get
\begin{align*}
|H_2|&\leq
C_T\,L_A\,|u_0|_{BV(\R^d)}\,\epsilon^{-1}\int_{|z|\leq
r}|z|^2\,\dif\mu(z).
\end{align*}
\smallskip

\noindent $3.\quad$\emph{Estimate of $H_3$.} By the definition of
$\bar\varphi^{\epsilon,\delta}$ and properties of 
mollifiers, a standard argument shows that
\begin{align*}
&\iint_{Q_T}\big|\bar\varphi^{\epsilon,\delta}(x,y,t,s)-\varphi^{\epsilon,\delta}(x,y,t,s)\big|\ \dif
y\,\dif s\\
&\leq d|\Omega_\eps|_{BV}\|\omega_\delta\|_{L^1}\dx+d\|\Omega_\eps\|_{L^1}|\omega_\delta|_{BV}\dt\leq O\left(\frac{\Delta x}{\epsilon}+\frac{\Delta
t}{\delta}\right).
\end{align*}
Similar estimates are given in
e.g. \cite{Cifani/Jakobsen/Karlsen}. This estimate along with several 
applications of Fubini's theorem then show that for all $\frac{\Delta
  x}{2}<r\leq 1$, 
\begin{align*}
|H_3|&\leq\iint_{Q_T}\big|\Levy^{\mu,r}[A(\bar u(\cdot,t))](x)\big|\,\Bigg(\iint_{Q_T}\big|\bar\varphi^{\epsilon,\delta}(x,y,t,s)
-\varphi^{\epsilon,\delta}(x,y,t,s)\big|\ \dif y\,\dif s\Bigg)\,\dif x\,\dif t\\
&\leq c\,L_A\left(\frac{\Delta x}{\epsilon}+\frac{\Delta t}{\delta}\right)\Bigg(\iint_{Q_T}\int_{r<|z|\leq1}|\bar u(x+z,t)-\bar u(x,t)|\ \dif\mu(z)\,\dif x\,\dif t\\
&\qquad\qquad\qquad\qquad\qquad\qquad+\iint_{Q_T}\int_{|z|>1}|\bar
u(x+z,t)-\bar u(x,t)|\ \dif\mu(z)\,\dif x\,\dif t\Bigg)\\
&\leq C_TL_A\Big(\frac{\Delta
x}{\epsilon}+\frac{\Delta
x}{\delta}\Big)\left(|u_0|_{BV}\int_{r<|z|\leq
1}|z|\,\dif\mu(z)+\|u_0\|_{L^1}\int_{|z|>1}\dif\mu(z)\right).
\end{align*}

\noindent 4.$\quad$\emph{Estimate of $H_4$.} Let $l\in(0,\ldots,d)$ and write
\begin{align*}
H_{4,l}
&=\gamma^{\mu^{\ast},r}_l\underbrace{\iint_{Q_T}\sum_{\alpha\in\Z^d}\sum_{n=0}^{N-1}\eta(A(U^n_{\alpha}),A(u(y,s)))\int_{t_n}^{t_{n+1}}\int_{R_\alpha}\hat
D_l\bar\varphi^{\epsilon,\delta}(x,y,t,s)\
\dif w}_{H_{4,l}^1}\\
&\quad-\gamma^{\mu^{\ast},r}_l\underbrace{\iint_{Q_T}\sum_{\alpha\in\Z^d}\sum_{n=0}^{N-1}\eta(A(U^n_{\alpha}),A(u(y,s)))\int_{t_n}^{t_{n+1}}\int_{R_\alpha}
\partial_{x_l}\varphi^{\epsilon,\delta}(x,y,t,s)\
\dif w}_{H_{4,l}^2}.
\end{align*}
Since $\int_{t_n}^{t_{n+1}}\int_{R_\alpha}
\bar\varphi^{\epsilon,\delta}(x,y,t,s)\,\dif x\dif t=\int_{t_n}^{t_{n+1}}\int_{R_\alpha}
\varphi^{\epsilon,\delta}(x,y,t_{n+1},s)\,\dif x\dif t$ by definition,
we can use summation by parts to find that
\begin{align*}
H_{4,l}^1&=-\iint_{Q_T}\sum_{\alpha\in\Z^d}\sum_{n=0}^{N-1}\hat
D_l\eta(A(U^n_{\alpha}),A(u(y,s)))\int_{t_n}^{t_{n+1}}\int_{R_\alpha}
\varphi^{\epsilon,\delta}(x,y,t_{n+1},s)\ \dif w.
\end{align*}
Integration in the $x_l$-direction followed by summation by parts
leads to
\begin{align*}
H_{4,l}^2&=-\Delta
x\iint_{Q_T}\sum_{\alpha\in\Z^d}\sum_{n=0}^{N-1}\hat
D_l\eta(A(U^n_{\alpha}),A(u(y,s)))\\
&\qquad\cdot\int_{t_n}^{t_{n+1}}\idotsint
\varphi^{\epsilon,\delta}(\xl,y,t,s)\ \dif x_1\ldots \dif
x_{l-1}\,\dif x_{l+1}\ldots\dif x_d\,\dif t\,\dif
y\,\dif s.
\end{align*}
Here we first integrated 
$\partial_{x_l}\varphi^{\epsilon,\delta}(\cdot,y,t,s)$ along the
interval $(x_{\alpha_l},x_{\alpha_{l+1}})$ to obtain the difference
$\varphi^{\epsilon,\delta}(x_{|x_l=x_{\alp_{l+1}}},y,t,s)-\varphi^{\epsilon,\delta}(\xl,y,t,s)$, 
and then we used summation by parts to move this difference onto
$\eta(A(U^n_{\alpha}),A(u(y,s)))$. Note that 
$\xl=(x_1,\dots,x_{l-1},x_{\alp_l},x_{l+1},\dots, x_d)$, and that
$x_l=x_{\alpha_l}$ is fixed here while the other variables $x_j,\
j\neq l$ vary. 

By the above computations,  the inequality $|\hat
D_l\eta(A(U^n_{\alpha}),A(u(y,s)))|\leq |\hat D_lA(U^n_{\alpha})|$
(i.e.~$||a-k|-|b-k||\leq|a-b|$), and Fubini, we find that
\begin{equation*}
\begin{split}
H_{4,l}=&\ \gamma^{\mu^{\ast},r}_l\iint_{Q_T}\sum_{\alpha\in\Z^d}\sum_{n=0}^{N-1}\hat
D_l\eta(A(U^n_{\alpha}),A(u(y,s)))\\
&\cdot\int_{t_n}^{t_{n+1}}\int_{R_\alp}\left(
\varphi^{\epsilon,\delta}(\xl,y,t,s)-\varphi^{\epsilon,\delta}(x_\alpha,y,t_{n+1},s)
\right)\dif x\,\dif
t\ \dif y\,\dif s\\
\leq&\ \gamma^{\mu^{\ast},r}_l\sum_{\alpha\in\Z^d}\sum_{n=0}^{N-1}\,|\hat D_lA(U^n_{\alpha})|\\
&\cdot\int_{t_n}^{t_{n+1}}\int_{R_\alp}\iint_{Q_T}\left|
\varphi^{\epsilon,\delta}(\xl,y,t,s)-\varphi^{\epsilon,\delta}(x_\alpha,y,t_{n+1},s)
\right|\dif w.
\end{split}
\end{equation*}
Since
$\phi^{\eps,\delta}(x,y,t,s)=\Omega_\eps(x-y)\omega_\delta(t-s)$ and
$(x,t)\in R_\alp\times (t_n,t_{n+1}]$, we
find as in part 3 that
\begin{align*}
&\iint_{Q_T}\left|
\varphi^{\epsilon,\delta}(\xl,y,t,s)-\varphi^{\epsilon,\delta}(x_\alpha,y,t_{n+1},s)
\right|\,\dif y\dif s\\
&\leq
C\Big(|\Omega_\eps|_{BV}\|\omega_\delta\|_{L^1}\dx+\|\Omega_\eps\|_{L^1}|\omega_\delta|_{BV}\dt\Big)=O\left(\frac\dx\eps+\frac\dt\delta\right).
\end{align*}
Summing over $l$ we then find that
\begin{align*}
|H_4|\leq d\,C|\gamma^{\mu^\ast,r}|\left(\frac{\Delta t}{\delta}+\frac{\Delta
x}{\epsilon}\right)\bigg(\sum_{n=0}^{N-1}\sum_{\alpha\in\Z^d}|\hat
D_lA(U^n_{\alpha})|\Delta t\,\Delta
x^{d}\bigg), 
\end{align*}
and since $\sum_{\alpha\in\Z^d}|\hat
D_lA(U^n_{\alpha})|\Delta
x^{d}=|A(\bar
u(\cdot,t_n))|_{BV}\leq L_A|u_0|_{BV}$, we conclude that
\begin{align*}
&|H_4|\leq C_TL_A\left(\frac{\Delta
x}{\epsilon}+\frac{\Delta t}{\delta}\right)\int_{r<|z|\leq
1}|z|\,\dif\mu(z).
\end{align*}
In view of part 1 - 4 the proof is now complete.
\end{proof}

\begin{proof}[Proof of Theorem \ref{th:kuz_prior} for the explicit method \eqref{scheme_explicit}]
We argue as in the beginning of the proof for the implicit method,
replacing the implicit cell entropy inequality by the explicit one
\eqref{u1}, and find that
\begin{align*}
&\|u(\cdot,T)-\bar u(\cdot,T)\|_{L^{1}(\mathbb{R}^d)}\leq C_T\,(\Delta x+\epsilon+\mathcal{E}_\delta(u)\vee\mathcal{E}_\delta(v))\\
&\quad+\iint_{Q_T}\iint_{Q_T}\eta(A(\bar u(x,t)),A(u(y,s)))\, \Levy^{\mu^\ast}_r[\varphi^{\epsilon,\delta}(x,\cdot,t,s)](y)\ \dif w\\
&\quad+\iint_{Q_T}\iint_{Q_T}\eta(A(\bar u(x,t)),A(u(y,s)))\,\hat\Levy^{\mu^\ast}_r[\bar\varphi^{\epsilon,\delta}(\cdot,y,t,s)](x)\ \dif w\\
&\quad+\iint_{Q_T}\iint_{Q_T}\eta'(\bar u(x,t+\Delta t),u(y,s))\, \Levy^{\mu,r}[A(\bar u(\cdot,t))](x)\,\bar\varphi^{\epsilon,\delta}(x,y,t,s)\ \dif w\\
&\quad-\iint_{Q_T}\iint_{Q_T}\eta'(\bar u(x,t),u(y,s))\,
\Levy^{\mu,r}[A(\bar u(\cdot,t))](x)\,
\varphi^{\epsilon,\delta}(x,y,t,s)\ \dif w\\
&\quad+\iint_{Q_T}\iint_{Q_T}\eta(A(\bar
u(x,t)),A(u(y,s)))\,\gamma^{\mu^\ast,r}\cdot(\hat
D\bar\varphi^{\epsilon,\delta}-\nabla_x\varphi^{\epsilon,\delta})(x,y,t,s)\
\dif w.
\end{align*}
The difference with the previous proof is the interpolation \eqref{interp_expl},
and more importantly, the new $\Levy^{\mu,r}$-terms. Note that by a
change of variables,
\begin{equation*}
\begin{split}
&\iint_{Q_T}\iint_{Q_T}\eta'(\bar u(x,t+\Delta t),u(y,s))\,
\Levy^{\mu,r}[A(\bar
u(\cdot,t))](x)\,\bar\varphi^{\epsilon,\delta}(x,y,t,s)\ \dif w\\
&= \iint_{Q_T}\iint_{Q_{\dt,T}}\eta'(\bar u(x,t),u(y,s))\,
\Levy^{\mu,r}[A(\bar u(\cdot,t-\Delta
t))](x)\,\bar\varphi^{\epsilon,\delta}(x,y,t,s)\ \dif
w\\
&\quad+\int_T^{T+\dt}\int_{\R^d}\iint_{Q_T}\eta'(\bar
u(x,t),u(y,s))
\Levy^{\mu,r}[A(\bar
u(\cdot,t-\dt))](x)\,\bar\varphi^{\epsilon,\delta}(x,y,t,s)\ \dif w,
\end{split}
\end{equation*}
where $Q_{a,b}=\R^d\times(a,b)$. The last term on the right
can be estimated by 
\begin{align*}
&\dt\|\phi^{\eps,\delta}\|_{L^1}\left(|A(\bar
  u)|_{BV}\int_{r<|z|<1}|z|\,\dif\mu(z)+2\|A(\bar
  u)\|_{L^1}\int_{|z|>1}\dif\mu(z)\right)\\
&=O\left(\dt\right)\int_{r<|z|<1}|z|\,\dif\mu(z).
\end{align*}
By similar computations, we can write the $\Levy^{\mu,r}$-terms in the
above inequality as
\begin{equation*}
\begin{split}
&\underbrace{\iint_{Q_T}\iint_{Q_{\dt,T}}\eta'(\bar u(x,t),u(y,s))\,
\Levy^{\mu,r}[A(\bar u(\cdot,t-\Delta
t))-A(\bar u(\cdot,t))](x)\,\bar\varphi^{\epsilon,\delta}(x,y,t,s)\ \dif
w}_{I}\\
&+\iint_{Q_T}\iint_{Q_T}\eta'(\bar u(x,t),u(y,s))\,
\Levy^{\mu,r}[A(\bar u(\cdot,t))](x)\,
(\bar\varphi^{\epsilon,\delta}-\varphi^{\epsilon,\delta})(x,y,t,s)\
\dif w\\
&+O\left(\dt\right)\int_{r<|z|<1}|z|\,\dif\mu(z).
\end{split}
\end{equation*}
Here we estimate the first term using the time regularity of $\bar u$,
\begin{equation*}
\begin{split}
I&\leq \iint_{Q_T}|\Levy^{\mu,r}[A(\bar u(\cdot,t-\Delta t))-A(\bar u(\cdot,t))](x)|\underbrace{\iint_{Q_T}\bar\varphi^{\epsilon,\delta}(x,y,t,s)\ \dif w}_{=O(1)}\\
&\leq c\, 2\,L_A\left(\int_{0}^T\|\bar u(\cdot,t-\Delta t)-\bar u(\cdot,t)\|_{L^1(\R^d)}\ \dif t\right)\int_{|z|>r}\dif\mu(z)\\
&\leq C_T\,\mathcal{E}_{\Delta t}(\bar u)\int_{|z|>r}\dif\mu(z),
\end{split}
\end{equation*}
where $\mathcal{E}_{\Delta t}(\bar u)$ is defined in
\eqref{time_mod}. Now all the remaining terms  can
be estimated as in the proof for the implicit method
\eqref{scheme_implicit}, so the proof is complete. 
\end{proof}





\begin{thebibliography}{10}

\bibitem{Alibaud}
N.~Alibaud.
\newblock Entropy formulation for fractal conservation laws.
\newblock {\em J.~Evol.~Equ.}, 7(1):145-175, 2007.

\bibitem{Alibaud/Cifani/Jakobsen}
N.~Alibaud, S.~Cifani and E.~R. Jakobsen.
\newblock Continuous dependence estimates for nonlinear fractional convection-diffusion
equations.
\newblock To appear in {\em SIAM J. Math. Anal.}

\bibitem{Alibaud/Cifani/Jakobsen2}
N.~Alibaud, S.~Cifani and E.~R.~Jakobsen.
\newblock Optimal continuous dependence estimates for fractal degenerate parabolic equations.
\newblock In preparation.

\bibitem{App:Book}
D. Applebaum.
\newblock {\em L\'evy Processes and Stochastic Calculus.}
\newblock Cambridge, 2009. 

\bibitem{BI08}
G. Barles and C. Imbert. 
\newblock Second-Order Elliptic Integro-Differential Equations: Viscosity
Solutions' Theory Revisited. 
\newblock {\em Ann. Inst. H. Poincare Anal. Non Linaire} 25 (2008), 567--585.



\bibitem{BiKaWo99} P.~Biler, G.~Karch and W.~Woyczy\'nski.
\newblock Asymptotics for
multifractal conservation laws.
\newblock \emph{Studia Math.}  135:231--252, 1999.

\bibitem{BiKaWo00} P.~Biler, G.~Karch and W.~Woyczy\'nski.
\newblock  Multifractal and
Levy conservation laws. 
\newblock {\em C. R. Acad. Sci. Paris S\'er. I Math.} 330
(2000), no. 5, 343--348.


\bibitem{Biswas}
I.~H.~Biswas, E.~R.~Jakobsen and K.~H.~Karlsen.
\newblock Difference-quadrature schemes for nonlinear degenerate parabolic integro-PDE.
\newblock {\em SIAM J. Numer. Anal.} 48(3):1110--1135, 2010.

\bibitem{Car99}
J.~Carrillo.
\newblock Entropy Solutions for nonlinear Degenerate
Problems.
\newblock \emph{Arch.~Ration.~Mech.~Anal.}
147:269--361, 199.

\bibitem{Chen/Karlsen}
G.~Chen and K.~H. Karlsen.
\newblock $L^1$-framework for continuous dependence and error estimates for quasilinear anisotropic degenerate parabolic equations.
\newblock {\em Trans.~Amer.~Math.~Soc.} 358(3):937--963 (electronic), 2006.

\bibitem{Cifani/Jakobsen}
S.~Cifani and E.~R. Jakobsen.
\newblock Entropy formulation for degenerate fractional order convection-diffusion equations.
\newblock {\em Ann. Inst. H. Poincare Anal. Non Lineaire}, 28(3):413-441, 2011.

\bibitem{CJ10}
S. Cifani and E. R. Jakobsen.
\newblock On the spectral vanishing viscosity method for periodic
fractional conservation laws. 
\newblock Submitted 2010.

\bibitem{Cifani/Jakobsen/Karlsen}
S.~Cifani, E.~R.~Jakobsen and K.~H.~Karlsen.
\newblock The discontinuous Galerkin method for fractal conservation laws.
\newblock {\em IMA J. Numer. Anal.}, 31(3):1090-1122, 2011.


\bibitem{Cont/Tankov}
R.~Cont and P.~Tankov.
\newblock Financial modelling with jump processes.
\newblock {\em Chapman \& Hall/CRC Financial Mathematics Series}, Chapman \& Hall/CRC, Boca Raton (FL), 2004.

\bibitem{DeRo04}
A.~Dedner and C.~Rohde.
\newblock Numerical approximation of entropy solutions for hyperbolic integro-differential equations.
\newblock \emph{Numer. Math.} 97(3):441--471, 2004.

\bibitem{DPQRV}
A. de Pablo, F. Quiros, A. Rodriguez and and J. L. Vazquez.
\newblock A fractional porous medium equation.'
\newblock {\em Adv. Math.} 226 (2011), no. 2, 1378--1409.


\bibitem{Droniou}
J.~Droniou.
\newblock A numerical method for fractal conservation laws.
\newblock \emph{Math. Comp.}
79: 71-94, 2010.

\bibitem{DrVV}
J. Droniou.
\newblock Vanishing non-local regularization of a scalar conservation
law. 
\newblock {\em Electron.
J. Differential Equations} 2003, 1--20 (2003)

\bibitem{Droniou/Imbert}
J.~Droniou and C.~Imbert.
\newblock Fractal first order partial differential equations.
\newblock {\em Arch.~Ration.~Mech.~Anal.} 182(2):299--331, 2006.

\bibitem{EK00}
S. Evje and K. H. Karlsen. 
\newblock Monotone difference approximations of BV solutions to degenerate
convection-diffusion equations. 
\newblock {\em SIAM J. Numer. Anal.}, 37(6):1838--1860, 2000.

\bibitem{EGH02}
R. Eymard, T. Gallouet, and R. Herbin. 
\newblock Error estimate for approximate solutions of a non-linear
convection-diffusion problem. 
\newblock {\em Advances in Differential Equations.}, 7(4), 419--440, 2002.

\bibitem{Holden/Risebro}
H.~Holden and N.~H.~Risebro.
\newblock Front Tracking for Hyperbolic Conservation Laws.
\newblock {\em Applied Mathematical Sciences}, 152, Springer, 2007.

\bibitem{JKLC08}
E. R. Jakobsen, K. H. Karlsen, and C. La Chioma. 
\newblock Error estimates for approximate solutions
to Bellman equations associated with controlled jump-diffusions. 
\newblock {\em Numer. Math.}, 110(2): 221-255, 2008.


\bibitem{Karlsen/Koley/Risebro}
K.~H.~Karlsen, U.~Koley and N.~H.~Risebro
\newblock An error estimate for the finite difference approximation to degenerate convection-diffusion
equations.
\newblock To appear in {\em Numer. Math.}.

\bibitem{KaUl10} K.~H.~Karlsen and S.~Ulusoy.
\newblock Stability of entropy solutions for L\'evy mixed hyperbolic parabolic equations.
\newblock {\em Electron. J. Diff. Eqns.} 2011(116), 1--23, 20011.

\bibitem{Kr:Book}
D. Kr\"oner.
\newblock {\em Numerical schemes for conservation laws.}
\newblock John Wiley \& Sons; Teubner, 1997.

\bibitem{Kru70}
S.~N. Kruzhkov.
\newblock First order quasilinear equations with
several independent variables.
\newblock \emph{Math. Sb. (N.S.)} 81(123):228--255, 1970.

\bibitem{Kuznetsov}
N.~N.~Kuznetsov.
\newblock Accuracy of some approximate methods for computing the weak solutions of a first-order quasi-linear equation.
\newblock {\em USSR.~Comput.~Math.~Phys.}, 16:105--119, 1976.


\bibitem{RoYo07}
C.~Rohde and W.-A.~Yong.
\newblock The nonrelativistic limit in radiation hydrodynamics. I. Weak entropy solutions for a model problem.
\newblock \emph{J. Differential Equations} 234(1):91--109, 2007.



\bibitem{Ziemer}
W. P. Ziemer.
\newblock {\em Weakly Differentiable Functions}
\newblock Springer-Verlag, New York, 1989.

\end{thebibliography}
\end{document}